\numberwithin{equation}{section}
\newtheorem{theorem}{Theorem}[section]
\newtheorem{lemma}[theorem]{Lemma}
\newtheorem{proposition}[theorem]{Proposition}
\begin{document}
\title{On the dynamics of formation of generic singularities of mean curvature flow}
\author{Zhou Gang\footnote{present address: Department of Mathematical Sciences, Binghamton University, Binghamton, 13902, NY, gzhou@math.binghamton.edu, partly supported by NSF grant DMS-1308985 and DMS-1443225.}}
\maketitle
\centerline{Department of Mathematics, California Institute of Technology, Pasadena, CA, 91125}
\setlength{\leftmargin}{.1in}
\setlength{\rightmargin}{.1in}
\normalsize \vskip.1in
\setcounter{page}{1} \setlength{\leftmargin}{.1in}
\setlength{\rightmargin}{.1in}
\large

\date

\setlength{\leftmargin}{.1in}
\setlength{\rightmargin}{.1in}
\normalsize \vskip.1in
\setcounter{page}{1} \setlength{\leftmargin}{.1in}
\setlength{\rightmargin}{.1in}
\large

\section*{Abstract}
We study formation of generic singularities under mean curvature flow by combining the different approaches and results, namely the techniques used by the author and the collaborators, where we obtained detailed informations when the initial hypersurfaces are close to cylinders, and these results by Colding and Minicozzi, which included all the generic blowups. Here we choose to study the cases where the rescaled flow converge to the cylinder $\mathbb{S}^{1}\times \mathbb{R}^3,$ and we extend the region controlled by Colding and Minicozzi to find a finer description of a neighborhood of the singularity.

\tableofcontents

\section{Introduction}

Here we study mean curvature flow (MCF) for a family of $n-$dimensional hypersurface embedded in $\mathbb{R}^{n+1}$ satisfying the equation
\begin{align}\label{eq:MeanCur}
\partial_{t}{\bf{x}}_t=-H{\bf{n}},
\end{align}where $H$ is the mean curvature, ${\bf{n}}$ is the normal vector at the point $\bf{x}$ in the surface.  

There exists an extensive literature for the dynamics of formation of singularities. In \cite{Huis1} Huisken studied convex hypersurfaces. For nonconvex ones, Huisken invented in \cite{Huis2} an
energy functional for rescaled MCF,
\begin{align}
F(\Sigma):=(4\pi)^{-\frac{n}{2}} \int_{\Sigma} e^{-\frac{|\omega|^2}{4}} d\mu(\omega),\label{eq:defFSi}
\end{align}where $\mu(\omega)$ is the area element at $\omega$ on the hypersurface $\Sigma$. An important property is that, suppose MCF forms a singularity at time $T$ and at ${\bf{x}}=0,$
and let $\Sigma_t$ be the hypersurface for the rescaled MCF $ \frac{{\bf{x}_t}}{\sqrt{T-t}}$ at time $t$, then it was shown that $\frac{d}{dt}F(\Sigma_t)\leq 0$, and $\frac{d}{dt}F(\Sigma_t)= 0$ only when $\Sigma_t$ is a cylinder $\mathbb{R}^{k}\times \mathbb{S}^{n-k}_{\sqrt{2(n-k)}}$ and its rotations, where $k=0,\ \cdots,\ n-1,$ and $\mathbb{S}^{n-k}_{\sqrt{2(n-k)}}$ is the $n-k$-dimensional sphere with radius $\sqrt{2(n-k)}.$

Suppose that in the initial hypersurface $\Sigma_0$ satisfies the condition, see \cite{CoMi2012},
\begin{align}\label{eq:generic}
\lambda(\Sigma_0):=\sup_{t_0,\ x_0} (4\pi t_0)^{-\frac{n}{2}}\int_{\Sigma_0} e^{-\frac{|x-x_0|^2}{4t_0}} d\mu<\infty,
\end{align}and suppose that the hypersurface forms a singularity at time $T$ and at ${\bf{x}}=0$, then Colding and Minicozzi proved in \cite{ColdingMiniUniqueness} that the rescaled MCF
$\frac{{\bf{x}_t}}{\sqrt{T-t}}$ converges to a unique cylinder $\mathbb{R}^{k}\times \mathbb{S}^{n-k}_{\sqrt{2(n-k)}}$ and their rotations, as $t\rightarrow T.$ Here $k=0,\ \cdots,\ n-1.$

Different from the works above, in \cite{GaKn20142, GaKnSi, GS2008, DGSW} Knopf, Sigal, the author, together with some other collaborators, applied a new set of techniques, namely modulational equations, propagator estimates, and finding optimal coordinates to study the blowup of nonlinear heat equations, and neckpinch for MCF. Different from most of the known works, the energy functional \eqref{eq:defFSi} did not play any role. By the new techniques,
in \cite{GaKn20142} we proved that the rescaled MCF converges to a unique cylinder, and obtained some detailed estimates, for a limited class of generic initial surfaces. After that Colding and Minicozzi proved the uniqueness for all generic blowups in \cite{ColdingMiniUniqueness}. 

In the present paper we unify the different approaches. We choose to study the regimes where the limit cylinder is $\mathbb{R}^3\times \mathbb{S}_{\sqrt{2}}$. The reason is that this regime has not been well understood as the others, for example $\mathbb{R}^1\times \mathbb{S}^{k}_{\sqrt{2k}}$. Huisken and Sinestrari in \cite{HuisSine2009} studied the cases where the limit cylinders are $\mathbb{S}^{k}\times \mathbb{R}^{4-k}$, with $k=4,3$. See also the works of Hamilton in \cite{Hamilton1997} for Ricci flow. In \cite{AltAngGiga1995}, Altschuler, Angenent and Giga studied the flow through singularities for surfaces of rotation. For the other related works, see \cite{sesum2008, BrHui2016, MR3662439, MR3602529}.

The other cases, where the limit cylinders are $\mathbb{R}^{k}\times \mathbb{S}^{n-k}_{\sqrt{2(n-k)}}$, $k=0,\ \cdots,\ n-1,$ will be addressed in our subsequent papers. 

The main goal of the present and subsequent papers is to understand a small, but fixed, neighborhood of singularities of MCF. Here we make some preparation by obtaining a description finer than that in \cite{ColdingMiniUniqueness}. From the results in \cite{ColdingMiniUniqueness}, without loss of generality we suppose that the rescaled MCF converges to $\mathbb{R}^3\times\mathbb{S}^1_{\sqrt{2}}$ as $t\rightarrow T$, then in a (possibly shrinking) neighborhood of $x=0$ the MCF takes the form
\begin{align}
{\bf{x}}_t=\left[
\begin{array}{c}
x\\
u(x,\theta,t) cos\theta\\
u(x,\theta,t) sin\theta
\end{array}
\right],
\end{align} where $x\in \mathbb{R}^3$, $u$ is a positive function, is periodic in $\theta\in [0,2\pi)$, and defined in a set $|x|\leq c(t)$ for some $c(t)>0$. The corresponding part of the rescaled MCF takes the form
\begin{align}\label{eq:ppp}
\frac{1}{\sqrt{T-t}}\left[
\begin{array}{c}
x\\
u(x,\theta,t) cos\theta\\
u(x,\theta,t) sin\theta
\end{array}
\right] =\left[
\begin{array}{ccc}
y\\
v(y, \theta,\tau)cos\theta\\
v(y, \theta,\tau)sin\theta
\end{array}
\right],
\end{align}where $v$ is a function defined as $$v(y,\theta,\tau)=\frac{1}{\sqrt{T-t}}u(x,\theta,t)$$ and the variables $y$ and $\tau$ are defined as 
\begin{align}
y:=\frac{1}{\sqrt{T-t}}x, \ \text{and}\ \tau:=-\ln(T-t).\label{def:ytau}
\end{align}The results in \cite{ColdingMiniUniqueness} imply that, for any fixed $y$ and $\theta$, $$\lim_{\tau\rightarrow \infty}v(y,\theta,\tau)\rightarrow \sqrt{2}.$$

In the present paper we clear a hurdle in the way to our goal, which is to understand a fixed neighborhood of the singularity. Ideally, to understand a fixed neighborhood of the singularity $0$ of MCF, by \eqref{def:ytau} we should control $v(y, \theta, \tau)$ when $|y|\leq ce^{\frac{1}{2}\tau}$ for some $c>0$, since it corresponds to the set, for MCF, $\Big\{x\ \Big| \ |x|\leq c\Big\}$. However we are not ready for this. To see the reason, we will decompose the function $v$ as
\begin{align}
v(y,\theta,\tau)=\sqrt{\frac{2+y^{T}B(\tau)y}{2a(\tau)}}+\xi(y,\theta,\tau)\label{eq:prelimXi}
\end{align} where $B$ is a $3\times 3$-symmetric-real-matrix-valued function, $a\approx \frac{1}{2}$ is a scalar function. We will prove that the first term is the main part. An obvious obstacle emerges:  we need that $B\geq 0$, or is almost semi-positive definite, to make $\sqrt{2+y^{T}By}$ well defined when $|y|$ is large. 

To prove that $B$ is almost semi-positive definite is one of the main objectives of the present paper. 

To achieve this goal we face a dilemma. Before proving $B$ is almost semi-positive definite we can only consider a relatively small region, so that $\sqrt{1+y^{T}By}$ is well defined. On the other hand, we have to consider a sufficiently large region to extract useful information. It is not hard to see the reason, because technically, to restrict the consideration to a certain neighborhood, we need to impose some cutoff function and hence need to control various terms produced by it. Thus if the considered set is too small, then the terms produced by the cutoff function will obscure the information we want to extract.

To overcome these difficulties, we restrict our consideration to the part inside a ball slightly larger than $ B_{10\sqrt{\ln \tau}}(0)$. This is allowed since from \cite{ColdingMiniUniqueness} we derive that $|B(\tau)|\leq \tau^{-\frac{1}{2}-\epsilon_0}$ for some $\epsilon_0>0$, see Lemma \ref{LM:ColdMini} below, which makes the first term in \eqref{eq:prelimXi} is well defined. More importantly, in one of the adopted norms, specifically $\|e^{-\frac{1}{8}|y|^2}\cdot \|_2,$ the adverse effect produced by the imposition of the cutoff function $\chi_{R}$, to be defined in \eqref{eq:reCutoff}, is negligible since it is bounded by 
\begin{align}
\int_{\mathbb{R}^3}|\chi^{'}_{R}(y)|e^{-\frac{1}{4}|y|^2} d^3y\leq \int_{|y|\geq 10\sqrt{\ln \tau}}e^{-\frac{1}{4}|y|^2} d^3y\ll \tau^{-20}.\label{eq:adverse}
\end{align}
We emphasize that we can actually study a much larger region, for example the part inside the ball $B_{\tau^{\frac{1}{4}}}(0).$ The unpleasant thing is that we will obtain similar results, but will have to analyze different parts more carefully. Hence we choose to study the smaller one.

By proving that the $3\times 3$ symmetric matrix $B$ is sufficiently close to be semi-positive definite and obtaining other estimates, 
we makes it possible to study the large neighborhood $B_{ce^{\frac{1}{2}\tau}}(0)$ in a subsequent paper \cite{gang2018description}, which corresponds to MCF inside the ball $B_{c}(0)$, where we address the problems of mean convexity and the isolation of singularities, see also \cite{choi2018ancient, choi2019ancient}. 

Next we discuss the techniques. Compared to the known works, here we use two types of norms, specifically the weighted $L^\infty$-norms $\|(1 +|y|)^{-k}\cdot\|_{\infty},\ k\geq 1,$ and the weighted $L^2$-norm $\|e^{-\frac{1}{8}|y|^2}\cdot\|_{2}$. They are both useful. Controlling the solution in the weighted $L^2$-norm makes it convenient to apply the known results and techniques, since it was used in \cite{ColdingMiniUniqueness} and 
in the study of Type I blowup of nonlinear heat equations, see e.g. \cite{GK1,GK2, GK3, HV2, FK1992, MZ1997, MR1230711},
\begin{align}
\partial_{t} u(x,t)=\Delta u(x,t)+u^{p}(x,t),\ \text{for}\ p>1.\label{eq:NLH}
\end{align} Moreover as discussed above the norm 
$\|e^{-\frac{1}{8}|y|^2}\cdot\|_{2}$ is very effective to limit the adverse effect produced by the cutoff functions, see the discussion around \eqref{eq:adverse}.

One of our technical advantages is to use the weighted $L^\infty$-norms $\|(1+ |y|)^{-k}\cdot\|_{\infty}$, $k=1,2,3,$ to derive point-wise estimates when $|y|$ is large. This plays a crucial role in controlling a sufficiently large neighborhood.  The advantage of adopting these norms is obvious: if one relies on Huisken's energy functional defined in \eqref{eq:defFSi}, then it is nearly impossible to obtain pointwise estimates for the rescaled hypersurface when $|\omega|\approx 10\sqrt{\ln \tau}$ since the weight $e^{-\frac{1}{4}|\omega|^2}$ decays too fast. But our chosen $L^\infty$-norms work much better in controlling the remainder $\xi$ in \eqref{eq:prelimXi} when $|y|$ is large.

We can use these norms because we rely on the propagator estimate to generate decay rates, see Theorem \ref{THM:frequencyWise} below.

The present paper depends on some known results and techniques. The results of Colding and Minicozzi \cite{ColdingMiniUniqueness}, specifically the decay rates (or converging rates) of certain functions, will be used, through Lemma \ref{LM:ColdMini} below. Their key technique, namely Lojasiewicz inequalities, will not play any role here. Thus the present paper is largely a separate step. However we believe that the two sets of techniques can be integrated better, and the proof in \cite{ColdingMiniUniqueness} and the present paper can be simplified considerably. Besides that, we need the techniques of central manifold method, which was used in studying Type I (generic) blowup of nonlinear heat equations, see \cite{GK1,GK2, HV2, FK1992, MZ1997}. The methods in Filippas and Liu \cite{MR1230711} are especially helpful in deriving sharp decay rates in \eqref{eq:b1234}-\eqref{eq:B3Decay} below. However we emphasize that MCF is fundamentally different from nonlinear heat equation, where the coordinate is given, while here to prove the existence of a good coordinate is a central part of our consideration.

The paper is organized as follows: the main theorem will be stated in Section \ref{sec:MainThm}. In Section \ref{sec:Effec} we decompose the graph function $v$ into different parts, and find governing equations for them. The main theorem will be proved in Section \ref{sec:pmainTHM}. In Sections \ref{sec:beta2Eqn}-\ref{sec:estM420} we estimate the different components of $v$, in various norms.

In the present paper, the notation $A\lesssim B$ signifies that there exists a universal constant $C$ such that $A\leq CB.$ The weighted $L^{\infty}$ norm $\|\langle y\rangle^{-k} f\|_{\infty}$ stands for $\|(1+|y|^2)^{-\frac{k}{2}}f\|_{\infty}.$ We define an inner product $\langle\cdot,\cdot\rangle_{\mathcal{G}}$ and hence the norm $\|\cdot\|_{\mathcal{G}}$ such that for any functions $f,g$ 
\begin{align}
\begin{split}\label{def:Gin}
\langle f,\ g\rangle_{\mathcal{G}}=&\int_{\mathbb{R}^3}\int_{0}^{2\pi} e^{-\frac{1}{4}|y|^2}f(y,\theta)\bar{g}(y,\theta) \ d\theta d^3y,\\
\|f\|_{\mathcal{G}}=&\langle f,\ f\rangle_{\mathcal{G}}^{\frac{1}{2}},
\end{split}
\end{align}and accordingly $f\perp_{\mathcal{G}} g$ signifies that $\langle f,\ g\rangle_{\mathcal{G}}=0.$

\section{Main Theorem}\label{sec:MainThm}

We assume the initial hypersurface $
\Sigma_{0}$ satisfies the condition, see \cite{CoMi2012, CIM13}
\begin{align}
\lambda(\Sigma_{0})<\infty,\label{eq:generic1}
\end{align}where $\lambda(\Sigma)$ is defined in \eqref{eq:generic}. Then it was proved in \cite{ColdingMiniUniqueness} that the limit cylinder is unique.

Based on this, we suppose that the blowup point is the origin and the blowup time is $T>0,$
and suppose that the limit cylinder is $\mathbb{R}^{3}\times \mathbb{S}^{1}_{\sqrt{2}}$, with $\mathbb{S}^{1}_{\sqrt{2}}$ being the 1-dimensional torus with radius $\sqrt{2},$ defined as, 
\begin{align}\label{eq:limitCylin}
\left[
\begin{array}{ccc}
y\\
\sqrt{2}cos\theta\\
\sqrt{2}sin\theta
\end{array}
\right], \ \text{with}\ y:=(y_{1},\ y_2,\  y_{3})^{T}\in \mathbb{R}^3,\ \theta\in [0,2\pi).
\end{align}

Thus in a (possibly shrinking) neighborhood of the origin, MCF can be parametrized by 
\begin{align}\label{eq:mcfu}
{\bf{x}}_t=\left[
\begin{array}{c}
x\\
u(x,\theta,t) cos\theta\\
u(x,\theta,t) sin\theta
\end{array}
\right],
\end{align}where $x\in \mathbb{R}^3$, $u$ is a positive function, is periodic in $\theta\in [0,2\pi)$, and is defined in the set $|x|\leq c(t)$ for some $c(t)>0$. The corresponding part of the rescaled MCF takes the form 
\begin{align}\label{eq:representation}
\frac{1}{\sqrt{T-t}}\left[
\begin{array}{c}
x\\
u(x,\theta,t) cos\theta\\
u(x,\theta,t) sin\theta
\end{array}
\right]=\left[
\begin{array}{ccc}
y\\
v(y, \theta,\tau)cos\theta\\
v(y, \theta,\tau)sin\theta
\end{array}
\right],
\end{align} where $v$ is a function defined in terms of $u$,
\begin{align}
u(x,\theta,t)=&\sqrt{T-t} \ v(y,\theta, \tau), \label{eq:rescaled}
\end{align} $y$ and $\tau$ are the rescaled spatial and time variables defined as
\begin{align}
y:=\frac{x}{\sqrt{T-t}},\ \tau:=-ln(T-t).\label{eq:scaling}
\end{align}

We are ready to state the main result. Recall the definition of $\mathcal{G}-$inner product in \eqref{def:Gin}. 
\begin{theorem}\label{THM:TwoReg}
Suppose the condition \eqref{eq:generic} holds, the blowup point is the origin, and the limit cylinder is the one parametrized by \eqref{eq:limitCylin}.

Then when $\tau$ is large and 
\begin{align}
|y|\leq 10\sqrt{\ln \tau},
\end{align}the rescaled MCF can be parametrized as in \eqref{eq:representation}, and moreover
one (and only one) of the following two possibilities must hold.
\begin{itemize}
\item[Case 1] For the first possibility, $v$ takes the form, for some $l\in \{1,2,3\}$, up to a rotation in $\mathbb{R}^3$, $y\rightarrow U y$,
\begin{align}
v(y, \theta,\tau)=\sqrt{\frac{2+\frac{1}{\tau}\sum_{k=1}^{l}y_k^2}{2a(\tau)}}+\eta(y, \sigma,\tau),
\end{align} where, for some $C>0,$ the function $a$ satisfies the estimate
\begin{align}
|a(\tau)-\frac{1}{2}|\leq C\tau^{-1},\label{eq:aparame}
\end{align} and
$\eta$ is considered remainder, its sharp $\mathcal{G}-$norm decay rate is
\begin{align}
\Big\|1_{|y|\leq 10\sqrt{\ln \tau}}\eta(\cdot,\tau)\Big\|_{\mathcal{G}}\leq C \tau^{-2},\label{eq:estEta}
\end{align} and in the weighted $L^{\infty}-$norm: for any $m+|k|+l=3$ and $m\geq 1,$
\begin{align}
\Big\|\langle y\rangle^{-m}1_{|y|\leq 10\sqrt{\ln \tau}}\partial_{\theta}^{l}\nabla_{y}^{k}\eta(\cdot,\tau)\Big\|_{\infty}\leq C (\sqrt{ln\tau})^{-m-1}. \label{eq:InftyEst}
\end{align}
\item[ Case 2]  The second possibility is that $v$ converges to $\sqrt{2}$ rapidly, specifically,
\begin{align}
\Big\|1_{|y|\leq 10\sqrt{\ln \tau}}\Big(v(\cdot,\tau)-\sqrt{2}\Big)\Big\|_{\mathcal{G}}\leq C\tau^{-3},
\end{align} and for any $m+|k|+l=3$ and $m\geq 1,$  
\begin{align}
\Big\|\langle y\rangle^{-m}1_{|y|\leq 10\sqrt{\ln \tau}}\partial_{\theta}^{l}\nabla_{y}^{k}\Big(v(\cdot,\tau)-\sqrt{2}\Big)\Big\|_{\infty}\leq C (\sqrt{ln\tau})^{-m-1}. \label{eq:InftyEst2}
\end{align}
\end{itemize}

\end{theorem}

Here $1_{|y|\leq Y}$, for any constant $Y>0$, is the standard Heaviside function, defined as
\begin{align}\label{eq:heavi}
1_{|y|\leq Y}(y):=\left[
\begin{array}{ll}
1\ \text{if}\ |y|\leq Y,\\
0\ \text{otherwise}.
\end{array}
\right.
\end{align}

The theorem will be proved in Section \ref{sec:pmainTHM}.

About the sharpness of the estimates, we have the following comments.
\begin{itemize}
\item[(A)] For the first case, the estimates \eqref{eq:aparame} and \eqref{eq:estEta} are sharp, by the known results for the blowup problem of nonlinear heat equations and rotationally symmetric MCF. However \eqref{eq:InftyEst} is not sharp, since we have to control terms produced by the cutoff functions, and the considered region is too small to make it sharp.
\item[(B)] About the two cases in the theorem, the first possibility is the most generic, and there are examples for the second, for example the initial hypersurface is $\mathbb{R}^3\times \mathbb{S}^1_{\sqrt{2}}$.
\end{itemize}


\section{the Effective Equations}\label{sec:Effec}

Recall that we assume that the blowup point is the origin, and assume the limit cylinder is the standard $\mathbb{R}^3\times \mathbb{S}^1_{\sqrt{2}}$.
Hence when $t$ sufficiently close to the blowup time $T$, in a (possibly shrinking) neighborhood of the origin, the MCF takes the form, 
\begin{align}\label{eq:oriMCF}
\left[
\begin{array}{c}
x\\
u(x,\theta,t) cos\theta\\
u(x,\theta,t) sin\theta
\end{array}
\right],
\end{align} where $x\in \mathbb{R}^3$, $u$ is a positive function, is periodic in $\theta\in [0,2\pi)$, and is defined in a set $|x|\leq c(t)$ for some $c(t)>0$. 

For the rescaled MCF, the part in \eqref{eq:oriMCF} becomes
\begin{align}\label{eq:vParame}
\left[
\begin{array}{c}
y\\
v(y,\theta,\tau) cos\theta\\
v(y,\theta,\tau) sin\theta
\end{array}
\right]=\frac{1}{\sqrt{T-t}}\left[
\begin{array}{c}
x\\
u(x,\theta,t) cos\theta\\
u(x,\theta,t) sin\theta
\end{array}
\right],
\end{align} where the variable $y$ and the function $v$ are naturally defined, and $\tau$
is defined as
\begin{align}
\tau:=-\ln(T-t).
\end{align}

To initiate our study,  we will derive some preliminary estimates for $v$ from \cite{ColdingMiniUniqueness} in Lemma \ref{LM:ColdMini} below.

As said earlier the parametrization (\ref{eq:vParame}) works only for a bounded set.
To measure the size of the controlled set, we define two functions $R_0, \ R_1:\ \mathbb{R}^{+}\rightarrow \mathbb{R}^{+}$ by the following identities:
\begin{align}
e^{\frac{1}{8}R^2_0(\tau)}=\tau^{\frac{18}{25}},\ \text{equivalently}, \ R_{0}(\tau):=\frac{12}{5}\sqrt{\ln\tau},\label{eq:defR0T}
\end{align} and
\begin{align}
e^{\frac{1}{8}R^2_1(\tau)}=\tau^{\frac{7}{10}},\ \text{equivalently}, \ R_{1}(\tau):=\sqrt{\frac{28}{5}}\sqrt{\ln\tau}.\label{eq:defR1T}
\end{align} Obviously $R_1<R_0.$ We need both functions since \eqref{eq:cm1} and \eqref{eq:cm2} below hold when $|y|\leq R_0$, and they will be derived from \cite{ColdingMiniUniqueness}. Then \eqref{eq:IniWeighted} will be derived from those two, thus it only holds in a smaller set, which is chosen to be $\Big\{y\ \Big|\ |y|\leq R_1\Big\}$.

We are ready to give some estimates for $v$, derived from \cite{ColdingMiniUniqueness}.
\begin{lemma}\label{LM:ColdMini}
There exists a constant $M$ such that if $\tau\geq M$ and $|y|\leq R_0(\tau)$, $v$ satisfies the estimates
\begin{align}
|v-\sqrt{2}|+ |\nabla_y v|+ |\partial_{\theta} v|\leq \tau^{-\frac{18}{25}} e^{\frac{1}{8}|y|^2},\label{eq:cm1}
\end{align} and for some constant $C$,
\begin{align}
\sum_{|k|+l=2}^{10}|\partial_{\theta}^{l}\nabla_{y}^{k}v|\leq C.\label{eq:cm2}
\end{align}

When $|y|\leq R_1(\tau)$ there exists a constant $\beta>0$ such that
\begin{align}
\Big\|\Big(v(\cdot,\tau)-\sqrt{2}\Big)1_{|y|\leq R_1}\Big\|_{\infty}+ \sum_{|k|+l=1,2,3,4}\Big\|1_{|y|\leq R_1}\nabla_{y}^{k}\partial_{\theta}^{l}v(\cdot,\tau)\Big\|_{\infty}\leq \tau^{-\beta}.\label{eq:IniWeighted}
\end{align}

\end{lemma}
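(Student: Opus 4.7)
The plan is to extract both estimates from the Colding-Minicozzi convergence results together with standard parabolic regularity, rather than to prove anything new about mean curvature flow itself. By the uniqueness of the cylindrical tangent flow under \eqref{eq:generic} from \cite{ColdingMiniUniqueness}, the rescaled graph $v$ converges to $\sqrt{2}$ with polynomial rate in the Gaussian-weighted $L^2$ norm: one extracts a rate $\gamma_0 \geq 18/25$ such that
\[
\|e^{-|y|^2/8}(v(\cdot,\tau)-\sqrt{2})\|_2 \lesssim \tau^{-\gamma_0}
\]
for all large $\tau$, with analogous bounds for $\nabla_y v$ and $\partial_\theta v$ after one round of parabolic smoothing. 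Moreover, since $v\to\sqrt{2}$ uniformly on every compact subset, the quasilinear equation \eqref{eq:scale1} is uniformly parabolic for $\tau$ large and has leading coefficients that are translation invariant in $y$ and $\theta$. Standard interior parabolic Schauder estimates then yield uniform $C^k$ bounds for $k\leq 10$ on unit parabolic cylinders around any basepoint $(y_0,\theta_0,\tau)$, with constants independent of $y_0$. This directly gives \eqref{eq:cm2}.

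To obtain \eqref{eq:cm1}, I would convert the weighted $L^2$ decay to a pointwise bound: on a unit ball around any $y_0$ with $|y_0|\leq R_0(\tau)$ the Gaussian weight is bounded below by $c\,e^{-|y_0|^2/8}$, so combining the Schauder estimate with Sobolev embedding yields
\[
|(v-\sqrt{2})(y_0,\theta_0,\tau)| \;\lesssim\; e^{|y_0|^2/8}\,\|e^{-|y|^2/8}(v-\sqrt{2})\|_{L^2} \;\lesssim\; \tau^{-\gamma_0}\,e^{|y_0|^2/8},
\]
which is the first bound in \eqref{eq:cm1}; the same argument applied to $\nabla_y v$ and $\partial_\theta v$ gives the remaining bounds. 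Restricting to the smaller ball $|y|\leq R_1(\tau)$, where $e^{|y|^2/8}\leq \tau^{7/10}$, the same estimate delivers the uniform bound \eqref{eq:IniWeighted} with $\alpha := \gamma_0 - 7/10 \geq 1/50 > 0$; the derivative bounds up to order four in \eqref{eq:IniWeighted} then follow by combining this pointwise decay with the uniform $C^{10}$ control from \eqref{eq:cm2} through an interpolation step.

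The main obstacle is to verify the precise rate $\gamma_0 \geq 18/25$ from the Colding-Minicozzi machinery; the radii $R_0$ and $R_1$ have been calibrated in \eqref{eq:defR0T}--\eqref{eq:defR1T} so that $e^{R_0^2/8}=\tau^{18/25}$ and $e^{R_1^2/8}=\tau^{7/10}$, leaving the margin $\alpha \geq 1/50$ and showing that both statements in the lemma express a single polynomial weighted-$L^2$ decay converted into two different regimes. Such a rate should follow from iterating the $F$-functional monotonicity, the Lojasiewicz-type inequality of \cite{ColdingMiniUniqueness}, and exploiting the spectral gap of the linearization about the cylinder after projecting off the finite-dimensional neutral subspace spanned by the quadratic modes $y_iy_j$ that are responsible for the two asymptotic regimes appearing in Theorem \ref{THM:TwoReg}.
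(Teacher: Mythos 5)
Your proposal is a genuinely different route from the paper's, and it contains a gap you yourself flag but do not close. The paper does not work with a weighted $L^{2}$ decay estimate on $v-\sqrt{2}$ at all. Instead it recalls from \cite{ColdingMiniUniqueness} the \emph{shrinker scale} $R_{s}(\tau)$ defined through \eqref{eq:defShr}, uses the \L ojasiewicz inequality (Theorem~6.1 and Lemma~6.9 there) to get $F(\Sigma_{\tau-1})-F(\mathcal{C})\lesssim\tau^{-3+\epsilon_{0}}$, translates that into a lower bound $R_{s}\gtrsim\sqrt{\ln\tau}$ and a bound $\|\phi\|_{L^{1}(B_{R_{s}})}\lesssim\tau^{-(3-\epsilon_{0})/2}$ on the shrinker quantity $\phi=H-\tfrac{1}{2}\langle x,\mathbf n\rangle$, and then invokes the \emph{pointwise} graph estimate (2.51) of \cite{ColdingMiniUniqueness}, which controls $|v-\sqrt{2}|$ and its first derivatives at scale $r$ by $R^{13}\big[e^{-d_{l,4}(R-1)^{2}/8}+\|\phi\|_{L^{1}}^{d_{l,4}/2}\big]e^{r^{2}/8}$. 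The exponent $18/25$ comes out of the numerology $\|\phi\|_{L^{1}}^{1/2}\sim\tau^{-3/4+}$ after $d_{l,4}\to 1$. Your proposal postulates instead a direct rate $\|e^{-|y|^{2}/8}(v-\sqrt 2)\|_{L^{2}}\lesssim\tau^{-\gamma_{0}}$ with $\gamma_{0}\geq 18/25$ and converts it pointwise by local regularity. But $F$-monotonicity together with \L ojasiewicz controls the $L^{2}$ and $L^{1}$ norms of $\phi$, not of $v-\sqrt 2$; converting from $\phi$-decay to $v$-decay is precisely what the Colding--Minicozzi pointwise estimate (2.51) is engineered to do, and is the content you label ``the main obstacle'' without resolving it. Absent that step the proposal does not go through.

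There is a second omission: the pointwise bounds from \cite{ColdingMiniUniqueness} are stated in a coordinate frame adapted to each fixed time $\tau$, not in a single limiting frame. The paper therefore also has to show that the frames converge fast enough, by proving $\int_{\tau}^{\infty}\|\phi(\cdot,s)\|_{1}\,ds\lesssim\tau^{-3/4-\sigma}$ (their Subsection on convergence rate), before reinterpreting the estimates in terms of the fixed $v(\cdot,\tau)$ appearing in \eqref{eq:cm1}--\eqref{eq:IniWeighted}. Your argument does not address the coordinate drift at all; without it the estimate \eqref{eq:cm1} would be on a $\tau$-dependent graph function rather than the one used in the rest of the paper. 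Two smaller points: the weighted $L^{2}$-to-pointwise conversion on a unit ball around $y_{0}$ picks up a factor $e^{(2|y_{0}|+1)/8}$ from the variation of the Gaussian weight, which is sub-polynomial but not negligible and must be absorbed by taking $\gamma_{0}$ strictly larger than $18/25$; and the drift term $-\tfrac12 y\cdot\nabla_{y}v$ in \eqref{eq:scale1} has unbounded coefficient, so the claimed Schauder constants ``independent of $y_{0}$'' require a further argument (the paper simply inherits the higher-derivative bounds \eqref{eq:cm2} directly from \cite{ColdingMiniUniqueness}, where they are proved by comparing to the unrescaled flow).
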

Here $1_{|y|\leq R_1}$ is the Heaviside function taking value $1$ when $|y|\leq R_1(\tau),$ and $0$ otherwise.

The lemma will be proved in Section \ref{sec:LMColdMini}.

These estimates are important because they initiate our analysis, to help us to prove that the parametrization (\ref{eq:vParame}) works in a much larger region.

To measure the size of the considered region, we define a function $R:[\tau_0,\infty)\rightarrow \mathbb{R}^{+}$ as
\begin{align}
R(\tau):=\sqrt{\frac{26}{5}\ln\tau+100 \ln(1+\tau-\tau_0)},\label{eq:defRTau}
\end{align}where $\tau_0>0$ is a sufficiently large constant to be chosen later. There are two reasons of defining such a function $R:$
\begin{itemize}
\item[(1)] We need to make Lemma \ref{LM:ColdMini}
applicable when $\tau\in [\tau_0,\tau_0+20]$, in this time interval $R(\tau)<R_1(\tau)$. We need this to initiate our bootstrap, which is to be used to prove Main Theorem \ref{THM:TwoReg}, see the discussion before Lemma \ref{LM:Lya}.
\item[(2)] When $\tau\geq \tau_0+20,$ we apply our techniques to control an increasingly large region, so that when $\tau$ is sufficiently large, we have $ \dfrac{R(\tau)}{\sqrt{\ln \tau}}> 10.$ This is our goal.
\end{itemize}

In the rest of this section we prepare for proving Theorem \ref{THM:TwoReg} by decomposing $v$ according to the spectrum of the linearized operator in Lemma \ref{LM:Lya} below, and then deriving governing equations for these parts in the region $\big\{y\ |\ |y|\leq (1+\epsilon)R(\tau)\big\}$ in Theorem \ref{THM:CoMini} below.

To achieve this we use a bootstrap argument. This is necessary. Before proving the desired \eqref{eq:decomVToW}-\eqref{eq:weightPrelim2} in Lemma \ref{LM:Lya} and \eqref{eq:Beqn}-(\ref{eq:weightLInf}) in Theorem \ref{THM:CoMini} in the desired region, we need to prove the existence of the solution and find some primitive estimates. On the other hand, for technical reasons, only after proving some sufficiently good estimates in an interval, say $\tau\in[\tau_0,\tau_1]$, we can prove the existence of solution and find some primitive estimates in a slightly larger interval $[\tau_0,\tau_1+\kappa]$ for some small $\kappa>0$, as shown in Lemma \ref{LM:regularity}.

Consequently we need to bootstrap to prove the desired results.

To initiate the bootstrap we need Lemma \ref{LM:ColdMini}, as discussed after \eqref{eq:defRTau} above.

Now we start the bootstrap by stating the first result.
Recall the definitions of the inner product $\langle \cdot, \cdot\rangle_{\mathcal{G}}$ and $\perp_{\mathcal{G}}$ in \eqref{def:Gin}.
\begin{lemma}\label{LM:Lya}
Suppose that, for some sufficiently small constant $\delta>0$, in the space and time region 
\begin{align}
\tau\in [\tau_0,\tau_1], \ \text{and}\ y\in \Big\{ y\ \Big| \ |y|\leq (1+\epsilon)R(\tau)\Big\}
\end{align} with $\tau_1\geq \tau_0+20$, the following estimates hold
\begin{align}
\Big|v(y,\theta,\tau)- \frac{1}{2}\Big|+\sum_{|k|+l=1}^4\Big|\nabla_{y}^{k}\partial_{\theta}^{l} v(y,\theta,\tau)\Big|\leq \delta.\label{eq:condition1}
\end{align} Here $\tau_0\gg 1$ is the same to that in the definition of $R(\tau)$ and $\epsilon$ is a positive constant in the definition of cutoff function $\chi_R$, see \eqref{eq:reCutoff} below. 

Then in the same space and time region the following estimates hold.

There exist unique functions $a$, $\alpha_l, \ l=1,2$, a $3\times 3$ symmetric real matrix-valued function $B$, and $3-$dimensional vector-valued functions $\Omega_{k},\ k=1,2,3,$ such that
\begin{align}
\begin{split}\label{eq:decomVToW}
v(y,\theta,\tau)= &V_{a(\tau),B(\tau)}(y)+{\Omega}_1(\tau)\cdot y +{\Omega}_2(\tau)\cdot y cos\theta + {\Omega}_3(\tau)\cdot y sin\theta\\ 
&+\alpha_1(\tau) cos\theta +\alpha_2(\tau) sin\theta +w(y,\theta,\tau),
\end{split}
\end{align}
and the function $\chi_{R}w$ is $\mathcal{G}-$orthogonal to the following 18 functions
\begin{align}
\begin{split}\label{eq:orthow} 
 &1,\  cos\theta,\  sin\theta,  \ 
y_k,\ \frac{1}{2}y_k^2-1,\  y_k cos\theta,
  y_k sin\theta,\   y_m y_n\ \text{with}\  m\not=n \ \text{and}\ k, m,n=1,2,3,
\end{split}
\end{align}
where $V_{a,B}$ and $\chi_{R}$ are two functions to be defined in \eqref{eq:defVaB} and \eqref{eq:reCutoff} below,
moreover
\begin{align}
\Big|a-\frac{1}{2}\Big|+ |B|+ \sum_{n=1}^3|\Omega_n|+ \sum_{l=1,2}|\alpha_l|&\lesssim \tau^{-\frac{3}{5}}, \label{eq:paramePrelim}\\
\sum_{|k|+l=0,1}\Big\|\nabla_{y}^k\partial_{\theta}^l\chi_{R}w\Big\|_{\mathcal{G}}&\lesssim \tau^{-\frac{3}{5}},\label{eq:weightPrelim1}\\
\sum_{|k|+l=2}\Big\|\nabla_y^{k}\partial_{\theta}^l\chi_{R} w\Big\|_{\mathcal{G}}&\lesssim \tau^{-\frac{3}{10}}.\label{eq:weightPrelim2}
\end{align}
\end{lemma}
The lemma will be proved in Section \ref{sec:DecomVW} below. The reasons of decomposing $v$ as in \eqref{eq:decomVToW} will be explained after \eqref{eq:Tchi3}, when we are ready.

Here the function $V_{a,B}$, for any $3\times 3$ symmetric real matrix $B$ and scalar $a$,  is defined as
\begin{align}
V_{a,B}(y):=\sqrt{\frac{2+ y^{T}B y}{2a}}.\label{eq:defVaB}
\end{align}As analyzed in \cite{GS2008}, $\sqrt{2+ y^{T}B y}$ is a solution to the equation
\begin{align}
-\frac{1}{2}y\cdot\nabla_{y}v+\frac{1}{2}v-\frac{1}{v}=0.
\end{align}This equation is believed to be the ``main part" of the governing equation for $v$ \eqref{eq:scale1} below, thus we believe, and will prove, that $V_{a, B}$ is the main part of $v$, provided that $a$ and $B$ are chosen correctly. 

To prepare for defining the cutoff function $\chi_{R}$, we define a spherically symmetric cutoff function $\chi:\mathbb{R}^3\rightarrow \mathbb{R}$ such that it is in $C^{19,1}$ and satisfies the condition
\begin{align}\label{eq:defChi3}
\chi(z)=\chi(|z|)=\ \Big[
\begin{array}{lll}
1,\ \text{if}\ |z|\leq 1,\\
0,\ \text{if}\ |z|\geq 1+\epsilon.
\end{array}
\
\end{align}We require it decreases in $|z|$, and there exist constants $M_k=M_k(\epsilon), \ k=0, 1,\cdots, 5,$ such that for any $z$ satisfying $0\leq 1+\epsilon-|z|\ll 1$, $\chi$ satisfies the estimate
\begin{align}
\begin{split}\label{eq:properties}
\frac{d^{k}}{d|z|^{k}}\chi(|z|)=&10M_k(|z|-1-\epsilon)^{20-k}+\mathcal{O}\Big((|z|-1-\epsilon)^{21-k}\Big),\ k=0,1,2,3,4.
\end{split}
\end{align}
Such a function is easy to construct, we skip the details here. (\ref{eq:properties}) will be used in controlling terms produced by the cutoff function, see e.g. \eqref{eq:unboundtwo} below.

Now we define the cutoff function $\chi_{R}$ as 
\begin{align}
\chi_{R}(y):=\chi(\frac{y}{R}).\label{eq:reCutoff}
\end{align}To control terms produced by $\chi_{R}$, we define a constant
$\kappa(\epsilon)$ as
\begin{align}
\kappa(\epsilon):=\sum_{k=1}^{5}\sup_{|z|}\Big|\frac{d^{k}}{d|z|^{k}} \chi(|z|)\Big|.\label{eq:defKappa}
\end{align}

We are ready for the second step of the bootstrap. Here we prepare for improving the decay rates of various parts by estimating their governing equations.
\begin{theorem}\label{THM:CoMini}
Suppose the conditions in \eqref{eq:condition1} hold in the space-and~time region
\begin{align}
\tau\in [\tau_0, \tau_1],\  \text{and}\ |y|\leq (1+\epsilon)R(\tau).
\end{align}

Then the governing equations for the functions in \eqref{eq:decomVToW} satisfy the following estimates: for some constant $c>0$, 
\begin{align}
|\frac{d}{d\tau}B+ B^{T}B|\leq &c\Big(H_1 +\tilde\delta e^{-\frac{1}{5}R^2}\Big),\label{eq:Beqn}\\
|\Big(\frac{1}{a}\frac{d}{d\tau}-2\Big)\Big(a-\frac{1}{2}-\frac{1}{2}(b_{11}+b_{22}+b_{33})\Big)|\leq &c\Big(|B|^2+H_1 +\tilde\delta e^{-\frac{1}{5}R^2}\Big),\label{eq:Aeqn}\\
|\frac{d}{d\tau} \Omega_1-a\Big(1+\mathcal{O}(|B|)\Big)\Omega_1|\leq &c\Big(H_1 +\tilde\delta e^{-\frac{1}{5}R^2}\Big),\label{eq:beta1}\\
|\frac{d}{d\tau}\Omega_2|+ |\frac{d}{d\tau}\Omega_3|\leq &c\Big(H_2 +\tilde\delta e^{-\frac{1}{5}R^2}\Big),\label{eq:beta2Eqn}\\
|\frac{d}{d\tau}\alpha_1-\frac{1}{2} \alpha_1|+ |\frac{d}{d\tau}\alpha_2-\frac{1}{2}\alpha_2|\leq &c\Big(H_2 +\tilde\delta e^{-\frac{1}{5}R^2}\Big).\label{eq:alpha1Eqn}
\end{align}Here the functions $H_1$ and $H_2$ and the constant $\tilde{\delta}$ are to be defined in \eqref{eq:defH1}, \eqref{eq:defH2} and \eqref{eq:defTDelta} below.
$\chi_{R}w$ satisfies the following estimates, 
\begin{align}
\begin{split}\label{eq:wL2Est}
\Big[\frac{d}{d\tau}+\frac{1}{4}\Big]&\|\chi_{R}w\|_{\mathcal{G}}^2\\
\leq & c\Big[\Big(|B|+\sum_{k=1}^{3}|\Omega_k|+\sum_{l=1,2}|\alpha_l|\Big)^4+\tilde\delta\|\partial_{\theta}^2 \chi_{R}w\|_{\mathcal{G}}^2 +\tilde\delta e^{-\frac{1}{5}R^2}\Big],
\end{split}
\end{align} 
\begin{align}
\begin{split}\label{eq:YwL2Est}
\Big[\frac{d}{d\tau}+\frac{1}{4}\Big]&\|\nabla_y \chi_{R}w\|_{\mathcal{G}}^2\\
\leq &c \Big[\big[|B|+\sum_{k=1}^{3}|\Omega_k|+\sum_{l=1,2}|\alpha_l|\big]^4+\tilde\delta\|\partial_{\theta}^2 \chi_{R}w\|_{\mathcal{G}}^2+\tilde\delta e^{-\frac{1}{5}R^2}\Big],
\end{split}
\end{align}
\begin{align}
\begin{split}\label{eq:TwL2Est}
\Big[\frac{d}{d\tau}+\frac{1}{4}\Big]&\Big[\|\partial_{\theta}^2 \chi_{R}w\|_{\mathcal{G}}^2+\|\partial_{\theta}\nabla_y \chi_{R}w\|_{\mathcal{G}}^2\Big]\\
\leq&c \Big[\Big(\sum_{k=2}^{3}|\Omega_k|+\sum_{l=1,2}|\alpha_l|\Big)^2 \Big(|B|+\sum_{k=1}^{3}|\Omega_k|+\sum_{l=1,2}|\alpha_l|\Big)^2+\tilde\delta e^{-\frac{1}{5}R^2} \Big],
\end{split}
\end{align}
and lastly,
\begin{align}
\begin{split}\label{eq:TwYL2Est}
\Big[\frac{d}{d\tau}+\frac{1}{4}\Big]&\sum_{|k|=2}\|\nabla_{y}^{k}\chi_{R} w\|_{\mathcal{G}}^2\\
\leq &c \Big[\Big(|B|+\sum_{k=1}^{3}|\Omega_k|+\sum_{l=1,2}|\alpha_l|\Big)^4+\tilde\delta \sum_{|k|+l=1} \|\nabla_{y}^{k}\partial_{\theta}^{l} \chi_{R}w\|_{\mathcal{G}}^2
+\tilde\delta e^{-\frac{1}{5}R^2}\Big].
\end{split}
\end{align}

\eqref{eq:Beqn}-\eqref{eq:alpha1Eqn} and \eqref{eq:paramePrelim}-\eqref{eq:weightPrelim2} imply that
\begin{align}
|\frac{d}{d\tau}a|+ |\frac{d}{d\tau}B|+ \sum_{k=1}^{3}|\frac{d}{d\tau}\Omega_k|+ \sum_{l=1,2}|\frac{d}{d\tau}\alpha_l| \lesssim \tau^{-\frac{11}{20}}.\label{eq:TauAB}
\end{align}

Lastly,
\begin{align}
\begin{split}\label{eq:weightLInf}
\|\langle y\rangle^{-3} \chi_{R}w\|_{\infty}\leq &c\tilde\delta \kappa(\epsilon) R^{-4}(\tau),\\
\|\langle y\rangle^{-2}\nabla_{y}^{k}\partial_{\theta}^{l}\chi_{R} w\|_{\infty}\leq &c\tilde\delta \kappa(\epsilon) R^{-3}(\tau),\  |k|+l=1,\\
\|\langle y\rangle^{-1}\partial_{\theta}\nabla_{y}^{k}\chi_{R} w\|_{\infty}\leq &c\tilde\delta \kappa(\epsilon) R^{-2}(\tau),\ |k|+l=2.
\end{split}
\end{align}Here the constant $\kappa(\epsilon)$ is defined in \eqref{eq:defKappa}.

\end{theorem}
The theorem will be proved in subsequent sections. We will derive \eqref{eq:beta2Eqn} in detail in Section \ref{sec:beta2Eqn}. The difficulty is that, to prepare for proving that $\Omega_2$ and $\Omega_3$ decay rapidly, we need to observe many cancellations. \eqref{eq:alpha1Eqn} and \eqref{eq:beta1} can be derived similarly, and equations similar to \eqref{eq:Beqn} and \eqref{eq:Aeqn} were derived in our previous works \cite{GaKn20142} and \cite{DGSW, GS2008, GaKnSi}, thus we will skip these parts. 

\eqref{eq:wL2Est}-\eqref{eq:TwL2Est} will be proved in Sections \ref{sec:wL2Est} - \ref{sec:TwL2Est}. 
The proof of \eqref{eq:TwYL2Est} is similar and easier than that of \eqref{eq:YwL2Est}, hence will be skipped.
The proof of \eqref{eq:weightLInf} is the most involved, and will be reformulated in Section \ref{sec:ReforWeightLInf} below.

Next we define the functions $H_1$, $H_2$ and the constant $\tilde{\delta}$ used above.

$H_1$ is defined to control the various parts in the decomposition of $v$,
\begin{align}
\begin{split}\label{eq:defH1}
H_{1}(\tau):=&|B(\tau)|^3+|B(\tau)|^2 |a_{\tau}|+|\Omega_1(\tau)|^3+\sum_{k=2}^{3}|\Omega_k(\tau)|^2+\sum_{l=1,2}|\alpha_l(\tau)|^2\\
&+\Psi(\tau)\Big[|B(\tau)|+\sum_{k=1}^{3}|\Omega_k(\tau)|+\sum_{l=1,2}|\alpha_l(\tau)|\Big]+R^4(\tau) \Psi^2(\tau),
\end{split}
\end{align}
and $H_2$ is defined to control the rapidly decaying $\theta-$dependence parts of $v,$
\begin{align}
\begin{split}\label{eq:defH2}
H_2(\tau):=&\Big[\sum_{k=2,3}|\Omega_k(\tau)|+\sum_{l=1,2}|\alpha_l(\tau)|\Big]\Big[|B(\tau)|^2+\sum_{k=1}^{3}|\Omega_k(\tau)|^2+\sum_{l=1,2}|\alpha_l(\tau)|^2+\Psi(\tau)\Big]\\
&+R^4(\tau)\Psi(\tau)\Big[\|\chi_{R}\partial_{\theta}^2 w(\cdot,\tau)\|_{\mathcal{G}}+\|\partial_{\theta}\nabla_y \chi_{R}w(\cdot,\tau)\|_{\mathcal{G}}\Big],
\end{split}
\end{align}
where the term $\Psi(\tau)$ is defined as
\begin{align}
\begin{split}
\Psi(\tau):=&\sum_{|k|+l=0,1,2}\Big\|\nabla_y^{k} \partial_{\theta}^{l}\chi_{R}w(\cdot,\tau)\Big\|_{\mathcal{G}}.
\end{split}
\end{align}$\tilde\delta$ is a positive constant defined as, recall the constant $\delta$ from (\ref{eq:condition1}),
\begin{align}
\tilde\delta:=\delta+\tau_{0}^{-\frac{1}{2}}.\label{eq:defTDelta}
\end{align}


Now we state the last result in the bootstrap. 
\begin{lemma}\label{LM:regularity}
Suppose that in the region
\begin{align}
\tau\in [\tau_0, \tau_1],\ \text{and} \ y\in \Big\{y\ \Big|\ |y|\leq R(\tau)\Big\}
\end{align}
with $\tau_1-\tau_0\geq 20$, $v$ satisfies the estimates,
\begin{align}
\Big|v(\cdot,\tau)-\sqrt{2}\Big|+\sum_{|k|+l=1,2}\Big|\nabla_{y}^{k}\partial_{\theta}^{l} v(\cdot,\tau)\Big|\leq R^{-\frac{1}{2}}(\tau).\label{eq:PointSmall}
\end{align}

Then for any $\delta>0,$ provided that $\tau_0$ is sufficiently large, there exists some small constant $\kappa=\kappa(\delta)>0$ such that, at the time
$
\tau=\tau_1+\kappa$ and in the region $ |y|\leq (1+\frac{1}{2}\kappa)\Big(R(\tau)-1\Big), 
$
\begin{align}
\Big|v(\cdot,\tau)-\sqrt{2}\Big|+\sum_{|k|+l=1}^4 \Big|\nabla_{y}^{k}\partial_{\theta}^{l} v(\cdot,\tau)\Big|\leq \delta.\label{eq:extenSmooth}
\end{align}
\end{lemma}
\begin{proof}
The main tools are the standard techniques of local smooth extension, see \cite{EckerBook}, and comparing the rescaled MCF to MCF used in \cite{ColdingMiniUniqueness}. See also \cite{MR485012, white05, MR1196160, CIM13}.

To make the tools applicable, we fix a time $\tau_1$ and define a new MCF by rescaling the one in \eqref{eq:MeanCur},
\begin{align}
{\bf{y}}_{s}:=\frac{1}{\lambda} {\bf{x}}_t
\end{align} where $s$ is the new time variable, $\lambda$ is a constant, defined as \begin{align}
s:=\lambda^{-2}(t-t_1), \ \text{and}\ \lambda:=\sqrt{T-t_1}.
\end{align} Recall that $\tau$ is defined by $\tau=-\ln (T-t)$, here $t_1$ is the unique time $t$ such that $\tau(t)=\tau_1.$

For the new MCF, the part \eqref{eq:mcfu} of the old one becomes, 
 \begin{align}\label{eq:flow2}
 \left[
\begin{array}{ccc}
z\\
p(z, \theta, s)\cos\theta\\
p(z,\theta,s) \sin \theta
\end{array}
\right]=\frac{1}{\lambda}
\left[
\begin{array}{c}
x\\
u(x,\theta,t) cos\theta\\
u(x,\theta,t) sin\theta
\end{array}
\right],
\end{align}
where $z=\frac{x}{\lambda}$ is the new spatial variable, $p(z,\theta,s)$ is a function defined in terms of $u$, and hence of $v$ through \eqref{eq:rescaled},
\begin{align}
\begin{split}\label{eq:restart}
p(z,\theta,s):
=&\frac{1}{\lambda} u(\lambda z, \theta,  \lambda^2 s+t_1)\\
=&\sqrt{1-s}\ v\big(\frac{ z}{\sqrt{1-s}},\theta, -\ln(1-s)+\tau_1\big).
\end{split}
\end{align} 
We derive estimates for $p$ through those for $v$ in \eqref{eq:extenSmooth} in the region $\tau\in [\tau_0, \tau_1]$  and $|y|\leq R(\tau)$. They imply that, when 
\begin{align}
s\in [-1,0]\ \text{and} \ |z|\leq R(\tau_1), 
\end{align}
$p$ is uniformly bounded in $C^2.$ 
Especially, when $s=0$, since $p(z,\theta,0)=v(z,\theta,\tau_1)$, \eqref{eq:PointSmall} implies
\begin{align}
\Big|p(z,\theta,0)-\sqrt{2}\Big|+\sum_{|k|+l=1,2} \Big|\nabla_{z}^{k}\partial_{\theta}^{l} p(z,\theta,0)\Big|\leq 10  R^{-\frac{1}{2}}(\tau_1).
\end{align}

We are ready to apply the techniques of local smooth extension and interpolation between the estimates on the derivatives. For any $\delta>0$, provided that $\tau_0$ is large enough so that $R^{-1}(\tau)\leq R^{-1}(\tau_0)$ is small enough, there exists a positive constant $\kappa$ such that in the region
\begin{align}
0\leq s\leq \kappa\ \text{and} \ |z|\leq R(\tau_1)-1, 
\end{align}$p$ satisfies the estimates
\begin{align}
\Big|p(z,\theta,s)-\sqrt{2}\Big|+ \sum_{|k|+l=1}^{4}\Big|\nabla_{x}^{k}\partial_{\theta}^{l} p(z,\theta,s)\Big|\leq \delta.
\end{align}

This and the identity in \eqref{eq:restart} imply the desired estimates for $v$ in \eqref{eq:extenSmooth}.
\end{proof}

\subsection{the governing equation for $\chi_{R}w$}
In this subsection we derive a governing equation for $\chi_{R}w$, and then present an intuitive reason of decomposing $v$ in \eqref{eq:decomVToW}.

Before deriving a governing equation for $\chi_{R}w$ we need one for $v,$ which comes from that for $u$.

We derive a governing equation for $u$ from the $0-$level set of the function $f$, $$f(x_1,x_2,\cdots, x_5,t)=0,$$ where $f$ is defined as
\begin{align}
f(x_1,x_2,\cdots,x_5,t):=\sqrt{x_4^2+x_5^2}-u(x_1,x_2,x_3,\theta,t), 
\end{align} with $\theta$ being the angle on the $x_4,\ x_5$ plane. It was shown in \cite{MR1770903} that $f$ satisfies the equation
\begin{align}
\partial_{t}f=\sum_{i,j}\Big(\delta_{ij}-\frac{f_{x_i}f_{x_j}}{|Df|^2}\Big)f_{x_ix_j}.
\end{align}
By this we derive a parabolic differential equation for $u$,
\begin{align}
\begin{split}\label{eq:MCF}
\partial_{t}u=&\sum_{ k,\ l= 1}^{3}\big[\delta_{k,l}-\frac{\partial_{x_k}u\partial_{x_l}u}{\Upsilon}\big]\partial_{x_k}\partial_{x_l}u+u^{-2}\frac{1+|\nabla_x u|^2}{\Upsilon}\partial_{\theta}^2 u+
u^{-2}\frac{2\partial_{\theta}u}{\Upsilon}\sum_{l=1}^{3}\partial_{x_l}u\partial_{x_l}\partial_{\theta}u\\
&+\frac{1}{\Upsilon}\frac{(\partial_{\theta}u)^2}{u^3}-\frac{1}{u},
\end{split}
\end{align} where the function $\Upsilon$ is defined as 
\begin{align*}
\Upsilon:=1+|\nabla_x u|^2+(\frac{\partial_{\theta}u}{u})^2.
\end{align*}

\eqref{eq:MCF} implies a governing equation for $v$, though the identity \eqref{eq:rescaled},
\begin{align}
\partial_{\tau}v=\Delta_{y} v+v^{-2}\partial_{\theta}^2 v-\frac{1}{2}y\cdot\nabla_{y}v+\frac{1}{2}v-\frac{1}{v}+N_1(v)\label{eq:scale1}
\end{align} where $N_1(v)$ will be treated as remainder, and is defined as
\begin{align}
\begin{split}\label{eq:defFu}
N_1(v):=\Upsilon^{-1}\Big[&-\sum_{k=1}^{3}(\partial_{y_k}v)^{2} \partial^{2}_{y_k}v
-v^{-2} (v^{-1}\partial_{\theta}v)^{2} \partial^{2}_{\theta}v+v^{-2}2\partial_{\theta}v
\sum_{l=1}^{3}\partial_{y_l}v\partial_{y_l}\partial_{\theta}v\\
&+\frac{(\partial_{\theta}v)^2}{v^3}-\sum_{i\not= j}\partial_{y_i} v \partial_{y_j} v
\partial_{y_i}\partial_{y_j}v\Big].
\end{split}
\end{align}

From the decomposition of $v$ in \eqref{eq:orthow} we derive a governing equation for $w$:
\begin{align}
\partial_{\tau}w=-Lw+F(B,a)+G(\Omega,\ \alpha)+N_1(v)+N_2(\eta),\label{eq:eqnw}
\end{align}
where the linear operator $L$ is defined as
\begin{align}
L:=-\Delta_y+\frac{1}{2}y\cdot \nabla_{y}-V^{-2}_{a,B}\partial_{\theta}^2-\frac{1}{2}-V^{-2}_{a,B} ,\label{eq:aBeqn}
\end{align} 
the term $N_2(\eta)$ is defined as
\begin{align}
\begin{split}\label{eq:defN2eta}
N_2(\eta):= &-v^{-1}+V_{a,B}^{-1}-V_{a,B}^{-2}\eta+\big(v^{-2}-V^{-2}_{a,B}\big)\partial_{\theta}^2 \eta\\
=&-V_{a,B}^{-2}v^{-1} \eta^2-v^{-2}V^{-2}_{a,B}(v+V_{a,B})\eta \partial_{\theta}^2 \eta,
\end{split}
\end{align}
the function $\eta$ is defined as
\begin{align}
\begin{split}\label{eq:decomW}
\eta:=&v-V_{a,B}
={\Omega}_1\cdot y +{\Omega}_2\cdot y cos\theta + {\Omega}_3\cdot y sin\theta +\alpha_1 cos\theta +\alpha_2 sin\theta 
+w
\end{split}
\end{align} 
and
the function $F(B,a)$ is defined as
\begin{align}
\begin{split}\label{eq:source}
F(B,a)
:=& -\frac{y^{T}(\partial_{\tau}B +B^{T}B) y}{2 \sqrt{2a} \sqrt{2+y^{T}B y}}+\frac{1}{\sqrt{2a}\sqrt{2+y^{T}By}}\Big[\frac{a_{\tau}}{a}+1-2a+b_{11}+b_{22}+b_{22}\Big]\\
&+\frac{y^{T}B^{T}B y \ y^{T}B y}{2 \sqrt{2a} (2+y^{T}B y)^{\frac{3}{2}}}+\frac{a_{\tau}}{(2a)^{\frac{3}{2}}} \frac{y^{T}By}{\sqrt{2+y^{T}By}},
\end{split}
\end{align}
and the function $G(\Omega,\alpha)=G(\Omega_1, \Omega_2,\ \Omega_3,\ \alpha_1,\ \alpha_2)$ is defined as
\begin{align}
G(\Omega, \alpha)
:=&-\Big[L+\frac{d}{d\tau}\Big] \Big[{\Omega}_1\cdot y +{\Omega}_2\cdot y cos\theta + {\Omega}_3\cdot y sin\theta +\alpha_1 cos\theta +\alpha_2 sin\theta \Big]\nonumber\\
=&\Big[\frac{2a}{2+y^{T}By}\Omega_1-\frac{d}{d\tau}\Omega_1\Big]\cdot y-\frac{d}{d\tau}\Omega_2 \cdot y cos\theta-\frac{d}{d\tau}\Omega_3\cdot y sin\theta\nonumber\\
&+\Big[\frac{1}{2}\alpha_1-\frac{d}{d\tau}\alpha_1\Big] cos\theta+\Big[\frac{1}{2}\alpha_2-\frac{d}{d\tau}\alpha_2\Big] sin\theta.\nonumber
\end{align}
Here certain terms cancel each other by the identities $\frac{d^2}{d\theta^2}cos\theta+cos\theta=0$ and $\frac{d^2}{d\theta^2}sin\theta+sin\theta=0.$

Impose the cutoff function $\chi_{R}$ onto \eqref{eq:eqnw} to derive an equation for $\chi_{R}w$
\begin{align}
\partial_{\tau}(\chi_{R}w)=&-L(\chi_{R}w)+\chi_R \Big( F(B,a)+G(\Omega, \alpha)
+N_1(v)+N_2(\eta)\Big)+\mu(w),\label{eq:tildew3}
\end{align} where the term $\mu(w)$ is defined as 
\begin{align}
\mu(w):=\frac{1}{2}\big(y\cdot\nabla_{y}\chi_{R}\big)w+\big(\partial_{\tau}\chi_{R}\big)w-\big(\Delta_{y}\chi_{R}\big)w-2\nabla_{y}\chi_{R}\cdot  \nabla_{y}w.\label{eq:Tchi3}
\end{align}

Now we are ready to explain the reason of imposing the orthogonality condition \eqref{eq:orthow} on $\chi_{R}w$.

Since $|B|+ |a-\frac{1}{2}|\ll1$, the linear operator $L$ is ``approximately" $L_0$, defined as $$ L_0:=-\Delta_y+\frac{1}{2}y\cdot \nabla_{y}-\frac{1}{2}\partial_{\theta}^2-1.$$
$L_0$ is conjugate to a self-adjoint operator $\mathcal{L}_0$, defined as
\begin{align}
\mathcal{L}_0:=e^{-\frac{1}{8} |y|^2} L_{0}e^{\frac{1}{8} |y|^2} =\sum_{k=1}^{3}\Big(-\partial_{y_k}^2+\frac{1}{16} y_k^2-\frac{1}{4}\Big)-1-\frac{1}{2}\partial_{\theta}^2.
\end{align} 
Here $\sum_{k=1}^{3}\Big(-\partial_{y_k}^2+\frac{1}{16} y_k^2-\frac{1}{4}\Big)$ is a harmonic oscillator with eigenvalues $\frac{1}{2}k,\ k=0,1,2,\cdots,$ and those of $-\partial_{\theta}^2$ are $\{n^2|\ n=0,1,\cdots\}.$ Since these two operators commute, the eigenvalues of $\mathcal{L}_0$ are $-1+\frac{1}{2}(k+n^2)$ with $k,n=0,1,2,\cdots .$
The directions corresponding to the positive eigenvalues of $\mathcal{L}_0$ are easy to control since, on the linear level, they decay exponentially fast.  Thus the difficulty is to control the directions with nonpositive eigenvalues. The nonpositive eigenvectors and eigenvalues are the followings: 
\begin{itemize}
\item $e^{-\frac{1}{8} |y|^2}$ with eigenvalue $-1;$
\item  $e^{-\frac{1}{8} |y|^2} y_k, \ k=1,2,3, \ e^{-\frac{1}{8} |y|^2} cos\theta,\ e^{-\frac{1}{8} |y|^2} sin\theta$ with eigenvalue $-\frac{1}{2};$ 
\item $e^{-\frac{1}{8} |y|^2}(\frac{1}{2}y_k^2-1)$, $e^{-\frac{1}{8} |y|^2} y_k cos\theta$, $e^{-\frac{1}{8} |y|^2} y_k sin\theta$ and $e^{-\frac{1}{8} |y|^2} y_m y_n, \ m\not=n, \ \ k, m,n=1,2,3,$ with eigenvalue $0.$
\end{itemize}

By making $e^{-\frac{1}{8} |y|^2}\chi_{R}w$ orthogonal to these eigenvectors in \eqref{eq:orthow}, at least intuitively, we expect $e^{-\frac{1}{8} |y|^2}\chi_{R}w$ to decay. This is the reason we impose the orthogonality conditions \eqref{eq:orthow}


\section{Proof of the Main Theorem \ref{THM:TwoReg}}\label{sec:pmainTHM}
We will prove the desired results in three steps. In the first step, which is in subsection \ref{subsec:effEqn} below, we prove the decomposition \eqref{eq:decomVToW} and the estimates \eqref{eq:Beqn}-\eqref{eq:weightLInf} hold in the time interval $\tau\in [\tau_0,\infty)$ and in the region $\Big\{ y\ \Big|\ |y|\leq (1+\epsilon)R(\tau)\Big\}$. 

In this step we prove part of Main Theorem \ref{THM:TwoReg}, specifically, \eqref{eq:InftyEst} is exactly \eqref{eq:weightLInf}. This is the most involved part of the present paper.

Also, in this step it is important to prove \eqref{eq:Beqn}- \eqref{eq:TwYL2Est},  even though they are not part of Main Theorem \ref{THM:TwoReg},  because they help our induction, and eventually provide the wanted estimates. Here we have to observe some cancellations in \eqref{eq:zeros} before proving \eqref{eq:beta2Eqn}.

In the second step, which is in subsection \ref{subsec:thetaPart}, we prove that the $\theta-$dependent components of $v$ decay rapidly. They include $\Omega_2,\ \Omega_3$, $\alpha_1$, $\alpha_2$, $\|e^{-\frac{1}{8}|y|^2} \partial_{\theta}\chi_{R} w\|_{2}$, $\|e^{-\frac{1}{8}|y|^2} \partial_{\theta}^2\chi_{R} w\|_{2}$ and $\|e^{-\frac{1}{8}|y|^2} \partial_{\theta}\nabla_y\chi_{R} w\|_{2}$. The main result is Lemma \ref{LM:improve1}. 

In the last step, which is in subsection \ref{subsec:nontheta}, we focus on the other parts, specifically, $a$, $B$ and $\Omega_1$, and $\sum_{|k|\leq 2}\|e^{-\frac{1}{8}|y|^2} \nabla_y^k \chi_{R} w\|_{2}$. The main result is Lemma \ref{LM:assigned}. Since we already proved that the $\theta-$dependent part decays rapidly, this step is similar to the study of spherically symmetric MCF.

Lemmas \ref{LM:improve1} and \ref{LM:assigned} imply the desired estimates for the corresponding parts in Theorem \ref{THM:TwoReg}. Thus we will complete the proof of Main Theorem \ref{THM:TwoReg} after carrying out these steps.

The following estimate will be used very often in the rest of the paper: Suppose that $f: [0,\ 2\pi]\rightarrow \mathbb{R}$ is a smooth periodic function with $f(0)=f(2\pi)$, then
\begin{align}\label{eq:poincare}
\|\partial_{\theta}f\|_2\leq \|\partial_{\theta}^2 f\|_2.
\end{align}The proof is easy: Fourier-expand $f$ to find $f(\theta)=\sum_{n=-\infty}^{\infty}f_n e^{in\theta}$, thus
\begin{align}
\|\partial_{\theta}f\|_2=\sqrt{\sum_{n=-\infty}^{\infty}n^2|f_n|^2}\leq \sqrt{\sum_{n=-\infty}^{\infty}n^4|f_n|^2}=\|\partial_{\theta}^2f\|_2.
\end{align}

\subsection{Proof of that \eqref{eq:paramePrelim}-\eqref{eq:weightLInf} hold for $\tau\in [\tau_0, \infty)$}\label{subsec:effEqn}
The strategy is to apply a standard bootstrap argument. The intuitive ideas were presented before Lemma \ref{LM:Lya}. Here present a rigorous version.

To initiate the bootstrap we prove that \eqref{eq:condition1} holds when $|y|\leq (1+\epsilon)R(\tau)$ and $\tau\in [\tau_0, \ \tau_0+20]$, for some small $\epsilon>0$ and some sufficiently large $\tau_0$.

This is made true by Lemma \ref{LM:ColdMini}, which provides estimates for $v$ when $|y|\leq R_1(\tau)$ for any large $\tau.$ If $\epsilon$ is small enough and $\tau_0$ is large enough, it can provide estimates in the region $|y|\leq (1+\epsilon) R(\tau)$ when $\tau\in [\tau_0,\ \tau_0+20]$, since for some constant $\epsilon_0>0$,  
\begin{align}
\frac{R_1(\tau)}{R(\tau)}\geq 1+\epsilon_0.
\end{align}where, recall that $R_1$, $R_0$, $R$ are defined in \eqref{eq:defR0T}, \eqref{eq:defR1T} and \eqref{eq:defRTau}.

For the next step of bootstrap, suppose that \eqref{eq:condition1} holds in a time interval $[\tau_0,\ \tau_1],$ with $\tau_1-\tau_0\geq 20.$ 

This makes Lemma \ref{LM:Lya} and Theorem \ref{THM:CoMini} applicable in the same interval. Thus, the decomposition of $v$, \eqref{eq:paramePrelim} and \eqref{eq:weightLInf} imply that \eqref{eq:PointSmall} holds here. Consequently Lemma \ref{LM:regularity} becomes applicable, and implies that \eqref{eq:condition1} holds when $\tau\in [\tau_0, \tau_1+q]$ for some $q>0.$  

Thus \eqref{eq:condition1} actually holds in an interval larger than the one we assumed!

By induction and continuity, all these results above hold in $[\tau_0,\ \infty).$

The proof is complete.

\subsection{Estimates for the $\theta-$dependent part of $v$}\label{subsec:thetaPart}
In this subsection we estimate the fast-decaying $\theta-$dependent parts of $v$. The result is:
\begin{lemma}\label{LM:improve1}
There exists a time $\tau_1>0$ such that for any $\tau\geq \tau_1$,
\begin{align}
\sum_{k=2,3}|\Omega_k(\tau)|+\sum_{l=1,2} |\alpha_l(\tau)|\leq &\tau^{-5},\label{eq:desirePi2}\\
\| \partial_{\theta}\chi_{R} w(\cdot,\tau)\|_{\mathcal{G}}+
\ \| \partial_{\theta}^2\chi_{R} w(\cdot,\tau)\|_{\mathcal{G}}+\| \partial_{\theta}\nabla_y\chi_{R} w(\cdot,\tau)\|_{\mathcal{G}}\lesssim &\tau^{-\frac{11}{2}}.\label{eq:desirePi3}
\end{align}
\end{lemma}
\begin{proof}
We prove the results by induction.

To initiate it we observe that, when $\tau$ is large enough, \eqref{eq:paramePrelim} in Theorem \ref{THM:CoMini} implies
 \begin{align}
 \sum_{k=2,3}|\Omega_k(\tau)|+\sum_{l=1,2} |\alpha_l(\tau)|\leq \tau^{-\frac{3}{5}}.
 \end{align}

For the second step of induction, suppose that there exists a time $\tau_1$ such that for $\tau\geq \tau_1$ 
\begin{align}
\sum_{k=2,3}|\Omega_k(\tau)|+\sum_{l=1,2} |\alpha_l(\tau)|\leq \tau^{-\gamma},\ \text{for some}\ \gamma\in [ \frac{3}{5}, \ 5].\label{eq:bootstrap}
\end{align}
Then we claim that there exists a $\tau_2\geq \tau_1$, such that when $\tau\geq \tau_2$, a better decay rate holds,
\begin{align}
\sum_{k=2,3}|\Omega_k(\tau)|+\sum_{l=1,2} |\alpha_l(\tau)|\leq \tau^{-\gamma-\frac{1}{5}}. \label{eq:desirePi22}
\end{align}

Assuming the claim holds, then we obtain the desired decay estimate \eqref{eq:desirePi2} after iterating finitely many times.

To complete the proof we need to prove the claim \eqref{eq:desirePi22}. \eqref{eq:desirePi3} will be proved as a byproduct in the induction, see \eqref{eq:thetaRemainder} below with $\gamma=5.$ Thus the proof of the desired Lemma \ref{LM:improve1} will be complete after we prove the claim.

To prepare for the proof we estimate the function $H_2$ defined in \eqref{eq:defH2}, since the governing equations for $\Omega_k, \ k=2,3,$ and $\alpha_l,\ l=1,2,$ depend on it.

$H_2$ depends on $\|e^{-\frac{1}{8}|y|^2}\partial_{\theta}^{l}\nabla_y^{k}\chi_{R}w(\cdot,\tau)\|_2,\ |k|+l=1,2, \ l\geq 1.$ To control them we start from their governing equations in \eqref{eq:TwL2Est}. For the terms on its right hand, when $\tau\in [\tau_0,\tau_1]$  \eqref{eq:paramePrelim} implies that
\begin{align}
\Big(\sum_{k=2}^{3}|\Omega_k(\sigma)|+\sum_{l=1,2}|\alpha_l(\sigma)|\Big)^2 \Big(|B|^2+\sum_{k=2}^{3}|\Omega_k(\sigma)|+\sum_{l=1,2}|\alpha_l(\sigma)|\Big)^2 \lesssim \tau^{-\frac{12}{5}}
\end{align}and when $\tau\geq \tau_1$, \eqref{eq:paramePrelim} and \eqref{eq:bootstrap} imply that
\begin{align}
\Big(\sum_{k=2}^{3}|\Omega_k(\sigma)|+\sum_{l=1,2}|\alpha_l(\sigma)|\Big)^2 \Big(|B|^2+\sum_{k=2}^{3}|\Omega_k(\sigma)|+\sum_{l=1,2}|\alpha_l(\sigma)|\Big)^2 \lesssim \tau^{-\frac{6}{5}-2\gamma}.
\end{align}

Rewrite \eqref{eq:TwL2Est} by applying the Duhamel's principle
\begin{align}
\sum_{|k|+l=2,\ l\geq 1}\Big\|\partial_{\theta}^{l}\nabla_y^{k}\chi_{R}w(\cdot,\tau)\Big\|_{\mathcal{G}}^2\lesssim A_1+A_2+A_3,\label{eq:improvThe}
\end{align}
where the terms $A_k,\ k=1,2,3,$ are defined as $$A_1(\tau):=e^{-\frac{1}{4}(\tau-\tau_0)} \tau_0^{-\frac{3}{5}}+e^{-\frac{1}{4}(\tau-\tau_1)}\int_{\tau_0}^{\tau_1} e^{-\frac{1}{4}(\tau_1-\sigma)} \Big[\sum_{k=2}^{3}|\Omega_k(\sigma)|+\sum_{l=1,2}|\alpha_l(\sigma)|\Big]^2 \sigma^{-\frac{6}{5}}\ d\sigma,$$
$$A_2(\tau):=\int_{\tau_1}^{\tau} e^{-\frac{1}{4}(\tau-\sigma)} \Big[\sum_{k=2}^{3}|\Omega_k(\sigma)|+\sum_{l=1,2}|\alpha_l(\sigma)|\Big]^2 \sigma^{-\frac{6}{5}}\ d\sigma,$$
and $$A_3(\tau):=\int_{\tau_0}^{\tau} e^{-\frac{1}{4}(\tau-\sigma)}  e^{-\frac{1}{5}R^2(\sigma)}\ d\sigma.$$Here $\tau_0^{-\frac{3}{5}}$ in the definition of $A_1$ is from $\displaystyle\sum_{|k|+l=2,\ l\geq 1}\|e^{-\frac{1}{8}|y|^2}\partial_{\theta}^{l}\nabla_y^{k}\chi_{R}w(\cdot,\tau_0)\|_2\leq \tau_0^{-\frac{3}{10}}$ in \eqref{eq:weightPrelim2}. 

Next we estimate $A_k,$ $k=1,2,3.$

Observe that $A_1$ decays exponentially fast, thus for some $\tau_2$,
\begin{align*}
A_1(\tau)\leq \tau^{-12},\ \text{when}\ \tau\geq \tau_2;
\end{align*} 
To control $A_2$, we apply the estimate in \eqref{eq:bootstrap} and L'H\^opital's rule to obtain that,
\begin{align*}
A_2(\tau)\lesssim \int_{\tau_1}^{\tau} e^{-\frac{1}{4}(\tau-\sigma)}  \sigma^{-\frac{6}{5}-2\gamma} \ d\sigma
\lesssim\tau^{-\frac{6}{5}-2\gamma}, \ \text{for any}\ \tau\geq \tau_1.
\end{align*}
For $A_3$, the rapid decay of $e^{-\frac{1}{5}R^2(\tau)}$ and L'H\^opital's rule imply that, for some large $\tau_2,$
\begin{align*}
A_3(\tau)\leq \tau^{-12},\ \text{when}\ \tau\geq \tau_2.
\end{align*}

Feed these back to \eqref{eq:improvThe} and find that, if $\tau$ is sufficiently large, then
\begin{align*}
\sum_{|k|+l=2,\ l\geq 1}\Big\|\partial_{\theta}^{l}\nabla_y^{k}\chi_{R}w(\cdot,\tau)\Big\|_{\mathcal{G}}^2\lesssim \tau^{-\frac{6}{5}-2\gamma},
\end{align*}recall that we assume that $\gamma\leq 5$.
This, together with \eqref{eq:poincare},  implies that, 
\begin{align}
\sum_{|k|+l\leq 2,\ l\geq 1}\Big\|\partial_{\theta}^{l}\nabla_y^{k}\chi_{R}w(\cdot,\tau)\Big\|_{\mathcal{G}}\leq 2\sum_{|k|+l=2,\ l\geq 1}\Big\|\partial_{\theta}^{l}\nabla_y^{k}\chi_{R}w(\cdot,\tau)\Big\|_{\mathcal{G}}\ \lesssim \tau^{-\frac{3}{5}-\gamma }.\label{eq:thetaRemainder}
\end{align}

This completes the treatment for $\sum_{|k|+l\leq 2,\ l\geq 1}\|\partial_{\theta}^{l}\nabla_y^{k}\chi_{R}w(\cdot,\tau)\|_{\mathcal{G}}.$

Now we turn to $\sum_{|k|+l=0,1,2,}\|\partial_{\theta}^{l}\nabla_y^{k}\chi_{R}w(\cdot,\tau)\|_{\mathcal{G}}$, also part of $H_2$. This is easier since we do not seek a rapid decay rate. Thus, for some large $\tau_2$,
\begin{align}
\sum_{|k|+l=0,1,2,}\|\partial_{\theta}^{l}\nabla_y^{k}\chi_{R}w(\cdot,\tau)\|_{\mathcal{G}}\lesssim \tau^{-\frac{6}{5}}, \ \text{when}\ \tau\geq \tau_2.\label{eq:slower}
\end{align}

Now we are ready to provide an estimate for $H_2.$ \eqref{eq:thetaRemainder}, \eqref{eq:slower}, \eqref{eq:paramePrelim} and \eqref{eq:bootstrap} imply that, if $\tau_2$ is sufficiently large, then for any $\tau\geq \tau_2,$
\begin{align}
H_2(\tau)\lesssim  R^{4}(\tau)\tau^{-\gamma-\frac{9}{5}}\leq  \tau^{-\gamma-\frac{13}{10}},\label{eq:onestep}
\end{align}where, recall that $R(\tau)=\mathcal{O}( \sqrt{\ln \tau})$.

After estimating $H_2$ we are ready to control $\Omega_k,\ k=2,3,$ and $\alpha_l, \ l=1,2$. 

We integrate the $\Omega_k-$equations in \eqref{eq:beta2Eqn} from $\tau$ to $\infty$, and use that $\Omega_{k}(\infty)=0$ implied by \eqref{eq:bootstrap}, and find
\begin{align}\label{eq:TauInfty}
|\Omega_{k}(\tau)|\lesssim \int_{\tau}^{\infty}[H_2(\sigma)+ e^{-\frac{1}{5}R^2(\sigma)}]\ d\sigma\leq \frac{1}{4} \tau^{-\frac{1}{5}-\gamma}, \ k=2,3,\ \text{for}\ \tau\geq \tau_2\geq \tau_1.
\end{align}
Similarly, by the $\alpha_k-$equation, $k=1,2,$ in \eqref{eq:alpha1Eqn} and that $\alpha_{k}(\infty)=0,$
\begin{align}
|\alpha_k(\tau)|\lesssim \int_{\tau}^{\infty}e^{\frac{1}{2}(\tau-\sigma)} [H_2(\sigma)+ e^{-\frac{1}{5}R^2(\sigma)}]\ d\sigma\leq \frac{1}{4}\tau^{-\frac{1}{5}-\gamma}.
\end{align}

These two estimates above imply the desired claim \eqref{eq:desirePi22}.

\end{proof}

\subsection{Decay rates for $a$, $B$, $\Omega_1$ and $\sum_{|k|\leq 2}\|e^{-\frac{1}{8}|y|^2} \nabla_y^{k} \chi_{R} w(\cdot,\tau)\|_{2}$}\label{subsec:nontheta}
In the last subsection we studied the rapidly decaying $\theta-$dependent parts of $v$. Here we will study the other slowly decaying parts. 

Technically the present situation is very close to spherically symmetric MCF, and nonlinear heat equations. This makes it possible to use the techniques in \cite{GK1,GK2, HV2, FK1992, MZ1997}, and in particular \cite{MR1230711}, where the blowup problem of nonlinear heat equation was studied. The present problem is actually easier because Theorem \ref{THM:CoMini} provides some preliminary estimates.

The main result is the following:
\begin{lemma}\label{LM:assigned}
There are only two possibilities, listed in [A] and [B] below.
\begin{itemize}
\item[(A)] There exists some constant $C>0$ such that
\begin{align}
\begin{split}
|a(\tau)-\frac{1}{2}|\leq & C\tau^{-1},\\
|\Omega_1(\tau)|+ \sum_{|k|\leq 2}\Big\|\nabla_y^{k} \chi_{R} w(\cdot,\tau)\Big\|_{\mathcal{G}}\leq & C\tau^{-2}.
\end{split}
\end{align}
 And up to a unitary transformation, as $\tau\rightarrow \infty,$
 \begin{align}
 B(\tau)=\tau^{-1} \left[
 \begin{array}{lll}
 b_1&0&0\\
 0&b_2&0\\
 0&0&b_3
 \end{array}
 \right]+\mathcal{O}(\tau^{-2}),
 \end{align} where $b_{k}, \ k=1,2,3$, only take two possible values, $0$ or $1.$
\item[(B)] If $\sum_{k=1}^3|b_k|=0$, then for some $C>0,$
\begin{align}
|B(\tau)|+ |a(\tau)-\frac{1}{2}|+ |\Omega_1(\tau)|+ \sum_{|k|\leq 2}\Big\| \nabla_y^{k} \chi_{R} w(\cdot,\tau)\Big\|_{\mathcal{G}}\leq C\tau^{-3}.
\end{align}
\end{itemize}
\end{lemma}
\begin{proof}
We will prove the desired results by induction. 

To initiate the induction, Theorem \ref{THM:CoMini}
implies that, if $\tau$ is large enough, then
\begin{align}
|B|+ |a-\frac{1}{2}|+ |\Omega_1|+ \sum_{|k|=0,1}\|\nabla_y^{k} \chi_{R} w(\cdot,\tau)\|_{\mathcal{G}}\leq \tau^{-\frac{3}{5}}.\label{eq:iniduct}
\end{align}

For the second step of induction, we suppose that, for some $\sigma\geq \frac{1}{10}$ and some large $\tau_1$, the following estimates hold for $\tau\in [\tau_1,\infty),$
\begin{align}
|B|+ |a-\frac{1}{2}|+ |\Omega_1|
+\sum_{|k|=0,1}\| \nabla_y^{k} \chi_{R} w(\cdot,\tau)\|_{\mathcal{G}}\leq \tau^{-\frac{1}{2}-\sigma}.\label{eq:rateSig}
\end{align}

In the next step of induction, we assume \eqref{eq:rateSig} holds.
And for different values of $\sigma,$ we claim the following different results:
\begin{itemize}
\item[(1)] If $\sigma$ satisfies
\begin{align}
\frac{3}{2}+3\sigma\leq \frac{11}{5},\label{eq:NoFini}
\end{align} then the decay rate in \eqref{eq:rateSig} can be improved significantly. Specifically there exists some $\tau_2\geq \tau_1$ such that when $\tau\geq \tau_2,$
\begin{align}
|B|+ |a-\frac{1}{2}|+ |\Omega_1|+ \sum_{|k|\leq 2}\| \nabla_y^{k} \chi_{R} w(\cdot,\tau)\|_{\mathcal{G}}\leq \tau^{-\frac{1}{2}-\tilde\sigma},\label{eq:Bdecay}
\end{align} where $\tilde\sigma$ is a constant defined as
$$
\tilde\sigma:=\frac{3}{20}+\frac{3}{2}\sigma.
$$
\item[(2)] If $\sigma$ is large enough to satisfy the estimate
\begin{align}
\frac{3}{2}+3\sigma= \frac{11}{5}+\epsilon_0,\ \text{for some}\ \epsilon_0>0,\label{eq:secdStep}
\end{align} then there are only two possibilities:
\begin{itemize}
\item[(A)] when $\tau$ is large enough, for some $C>0,$
\begin{align}
\begin{split}\label{eq:Generi}
|B|+|a-\frac{1}{2}|\leq & C\tau^{-1},\\
|\Omega_1|+ \sum_{|k|\leq 2}\Big\|\nabla_y^{k} \chi_{R} w(\cdot,\tau)\Big\|_{\mathcal{G}}\leq & C\tau^{-2}.
\end{split}
\end{align}
 And up to a unitary transformation, as $\tau\rightarrow \infty,$
 \begin{align}\label{eq:b1234}
 B=\tau^{-1} \left[
 \begin{array}{lll}
 b_1&0&0\\
 0&b_2&0\\
 0&0&b_3
 \end{array}
 \right]+\mathcal{O}(\tau^{-2}),
 \end{align} where $b_k,\ k=1,2,3,$ take only two possible values, $$b_{k}=0 \ \text{or}\ 1.$$
\item[(B)] 
If $b_1=b_2=b_3=0$, then the decay rate can be improved, for some $C>0,$
\begin{align}
|B|+ |a-\frac{1}{2}|+ |\Omega_1|+ \sum_{|k|\leq 2}\| \nabla_y^{k} \chi_{R} w(\cdot,\tau)\|_{\mathcal{G}}\leq C\tau^{-3}.\label{eq:B3Decay}
\end{align}
\end{itemize}
\end{itemize}

Assuming the claims hold, we are ready to prove Lemma \ref{LM:assigned}. 

\eqref{eq:NoFini} and \eqref{eq:Bdecay} imply that, after iterating the arguments  \eqref{eq:rateSig}-\eqref{eq:Bdecay} finitely many times, \eqref{eq:rateSig} holds for some $\sigma$ satisfying $\frac{3}{2}+3\sigma> \frac{11}{5}$ when $\tau$ is large enough.  Thus the condition \eqref{eq:secdStep} is satisfied, and furthermore \eqref{eq:Generi}-\eqref{eq:B3Decay} hold. 
These are exactly the desired estimates in Lemma \ref{LM:assigned}.

To complete the proof we have to prove the claims. Since the arguments are similar to those in \cite{GK1,GK2, HV2, FK1992, MZ1997, MR1230711}, we only sketch a proof here.

We start with proving that \eqref{eq:rateSig} and \eqref{eq:NoFini} imply \eqref{eq:Bdecay}. 

The governing equations for $a,\ B,\ \Omega_1$ and $\|e^{-\frac{1}{8}|y|^2} \nabla_y^{k} \chi_{R} w(\cdot,\tau)\|_{2}^2$ in Theorem \ref{THM:CoMini}, \eqref{eq:rateSig}, Lemma \ref{LM:improve1}, and the techniques in the proof of Lemma \ref{LM:improve1}, imply that that there exists some $Q\gg 1,$ such that if $\tau\geq Q$ then
\begin{align}
\Big|a-\frac{1}{2}-\frac{1}{2}(b_{11}+b_{22}+b_{33})\Big|+\sum_{|k|\leq 2}\Big\| \nabla_y^{k} \chi_{R} w\Big\|_{\mathcal{G}}\lesssim & \tau^{-1-2\sigma} + \tau^{-5},\label{eq:improvYD}\\
\Big|\frac{d}{d\tau}B+B^{T}B\Big|+ \Big|\Omega_1\Big|\lesssim &\tau^{-\frac{3}{2}-3\sigma}+\tau^{-5}.\label{eq:ImproveBe1}
\end{align}
This is the desired \eqref{eq:Bdecay}, except the estimate for $B$.

To prove the desired estimate for $B$, we let $b(\tau)$ to be the (absolute) largest eigenvalue of $B(\tau)$. Then \eqref{eq:ImproveBe1} implies that, for some constant $c$,
\begin{align}
\Big|\partial_{\tau}b(\tau)+b^2(\tau)\Big|\leq c \tau^{-\frac{3}{2}-3\sigma}.\label{eq:scalar}
\end{align}
To derive a decay rate for $b$, we compare the sizes of $b^2$ and $c \tau^{-\frac{3}{2}-3\sigma}.$ The idea is that, if $c \tau^{-\frac{3}{2}-3\sigma}$ dominates then obviously we have an upper bound for $b;$ if $b^2$ dominates, then we can derive a decay rate since the equation $\partial_{\tau}b+b^2=0$ can be solved exactly. 

Mathematically we discuss two excluding possibilities: 
\begin{itemize}
\item[(1)] for any large $M$, there exists $\tau_1\geq M$ such that $|b(\tau_1)|\leq 2\tau_1^{-\frac{13}{20}-\frac{3}{2}\sigma}$; \item[(2)] there exists some $\tau_1$ such that for any $\tau\geq \tau_1$, $$|b(\tau)|> 2\tau^{-\frac{13}{20}-\frac{3}{2}\sigma}.$$ 
\end{itemize}

For the first possibility we claim that there exists a large time $N$ such that
\begin{align}
|b(\tau)|\leq 2\tau^{-\frac{13}{20}-\frac{3}{2}\sigma}, \ \text{for any}\ \tau\geq N.\label{eq:desireSig}
\end{align}
This is the desire estimate for $B$ in \eqref{eq:Bdecay}. 

We prove the claim by contradiction. Suppose \eqref{eq:desireSig} does not hold for any $N$, then for any $M>0$ there exists a time $\tau_2\geq M$ and a maximal interval $(\tau_2, \tau_3)$ such that 
\begin{align}
\begin{split}\label{eq:assumNot}
|b(\tau_2)|=&2\tau_2^{-\frac{13}{20}-\frac{3}{2}\sigma},\\
|b(\tau)|>&2\tau^{-\frac{13}{20}-\frac{3}{2}\sigma}\ \text{when}\ \tau\in (\tau_2,\tau_3).
\end{split}
\end{align} 
Thus in this interval, we have $\frac{\tau^{-\frac{3}{2}-3\sigma}}{b^2}=\mathcal{O}(\tau^{-\frac{1}{5}})\ll 1,$ and hence \eqref{eq:scalar} is of the form
\begin{align}
\partial_{\tau}b+b^2\Big(1+\mathcal{O}(\tau^{-\frac{1}{5}})\Big)=0.\label{eq:scalar2}
\end{align}
This equation can be solved almost exactly. An important observation is that we must have $b(\tau_2)>0$, otherwise this equation implies that $|b|=-b$ will grow and blowup in finite time. This contradicts to the fact that $|b(\tau)|\lesssim \tau^{-\frac{3}{5}}$ by the estimate for $B$ in \eqref{eq:paramePrelim}. 
Thus we must have $b(\tau_2)=2\tau_2^{-\frac{13}{20}-\frac{3}{2}\sigma}.$ Use this to solve \eqref{eq:scalar2} to find that, for $\tau
\in [\tau_2,\ \tau_3]$,
\begin{align}
b(\tau)=\frac{1}{\frac{1}{b(\tau_2)}+(\tau-\tau_2)(1+o(1))}\leq 2\tau^{-\frac{13}{20}-\frac{3}{2}\sigma},\label{eq:solveODE}
\end{align} 
where in the second step we used that $\frac{13}{20}+\frac{3}{2}\sigma\leq 1.$ This is a contradiction to the inequality in \eqref{eq:assumNot}. Thus the claim \eqref{eq:desireSig} must hold.

For the second possibility,  we will rule out the possibility of its existence. 

We proceed by contradiction. Suppose that there exists a $\tau_1$ such that for any $\tau\geq \tau_1$ we have $|b(\tau)|> 2\tau^{-\frac{13}{20}-\frac{3}{2}\sigma}.$ Then \eqref{eq:scalar2} holds in the interval $[\tau_1,\infty)$. Argue as what is before \eqref{eq:solveODE} to find that $b(\tau_1)>0$ and for $\tau\geq \tau_1,$
\begin{align}
b(\tau)=\frac{1}{\frac{1}{b(\tau_1)}+(\tau-\tau_1)(1+o(1))}.
\end{align}
Thus $b$ decays like $\tau^{-1}$ for large time $\tau\gg \tau_1$, this contradicts to $|b(\tau)|>2\tau^{-\frac{13}{20}-\frac{3}{2}\sigma}$.

We completed the proof of Item [(1)].

For Part A of Item [(2)], we already proved \eqref{eq:Generi} as a byproduct of proving Item [(1)]: set $\sigma=\frac{1}{2}$ in \eqref{eq:rateSig}, \eqref{eq:improvYD} and \eqref{eq:ImproveBe1}. 

What is left is to prove that the symmetric matrix $B$ is almost diagonal, recall that it satisfies
\begin{align}
\begin{split}
    \Big|\partial_{\tau}B+B^{T}B\Big|\lesssim & \tau^{-3}\\ 
|B(\tau)|\lesssim & \tau^{-1}.
\end{split}
\end{align}The key idea is to analyze the eigenvalues of $B$, similar to study the largest eigenvalue $b(\tau)$ above. Since the detailed arguments have been used in studying the blowup problem of nonlinear heat equation, see \cite{MR1230711}. We choose to skip the details here. 

This completes proving the A-part of Item [(2)].

Now we prove the B-part in Item [(2)]. Since $b_1=b_2=b_3=0$, \eqref{eq:b1234} implies that $|B|=\mathcal{O}(\tau^{-2}).$ Now we show that it decays even faster than $\tau^{-2}$. For that
we rewrite \eqref{eq:Beqn} by integrating both sides from $\tau$ to $\infty,$
\begin{align}
\begin{split}\label{eq:newFormB}
|B|(\tau)\leq & \int_{\tau}^{\infty}|B|^2(s)\ ds+c\int_{\tau}^{\infty}H_1(s)+ \tilde\delta e^{-\frac{1}{5}R^2(s)}\ ds\\
\leq & c\Big(1+o(|B(\tau)|)\Big)\int_{\tau}^{\infty}H_1(s)+ \tilde\delta e^{-\frac{1}{5}R^2(s)}\ ds.
\end{split}
\end{align} What is left is similar to the proof of Lemma \ref{LM:improve1}. By \eqref{eq:newFormB}, \eqref{eq:Aeqn}, \eqref{eq:beta1}, \eqref{eq:wL2Est}, \eqref{eq:YwL2Est} and \eqref{eq:TwYL2Est} we obtain the desired estimates. We skip the details here.
 
\end{proof}


\section{Proof of Lemma \ref{LM:Lya}}\label{sec:DecomVW}
\begin{proof}
The tool is the standard fixed point theorem. To prepare for its application we formulate the problem into a convenient form.

To make $\chi_{R}w$ satisfy the orthogonality conditions, it suffices to choose scalar-functions $a,$ $\alpha_l,\ l=1,2,$ and $3-$dimensional-vector-valued functions $\Omega_k,\ k=1,2,3,$ and a $3\times 3$-symmetric-real-matrix-valued function $B$ to make
\begin{align}\label{eq:21func}
 \Big\langle \Gamma\Big(a(\tau), \alpha(\tau), \Omega(\tau), B(\tau)\Big),\ \Lambda_{l}\Big\rangle_{\mathcal{G}}=0,
\end{align} where, recall the definition of $\langle \cdot, \ \cdot\rangle_{\mathcal{G}}$ in \eqref{def:Gin}, and $\Lambda_l$, $l=1,\cdots,18,$ are the 18 functions listed in \eqref{eq:orthow}, and $\Gamma$ is a function defined as
\begin{align}
\begin{split}
\Gamma(a,\ \alpha,\ \Omega,\ B)
:=&\chi_{R}\Big[v-V_{a,B}-{\Omega}_1\cdot y -{\Omega}_2\cdot y cos\theta- {\Omega}_3\cdot y sin\theta  -\alpha_1 cos\theta -\alpha_2 sin\theta \Big].
\end{split}
\end{align}

To make the fixed point theorem applicable we make the following two observations.

The first one is that, the $18\times 18$ matrix $A:=[a_{j,l}]$, with $a_{j,l}$ defined as, $$a_{j,l}:=\nabla_{\sigma_j}\Big\langle \Gamma(a,\ \alpha,\ \Omega,\ B),\ \Lambda_l\Big\rangle_{\mathcal{G}},\ \text{with}\ l,j=1,2,\cdots, 18,$$ is uniformly invertible, provided that $|a-\frac{1}{2}|$, $|\alpha_l|$ and $|\Omega_k|$ and $|B|$ are sufficiently small. Here $\sigma_j, \ j=1, \ \cdots, 18,$ are the 18 parameters defining the scalars $a$, $\alpha_l,$ $l=1,2,$ the $3-$dimensional real vectors $\Omega_k,\ k=1,2,3,$ and $3\times 3$ symmetric real matrix $B$. This can be easily verified. We skip the details here.

The second observation is that
\begin{align}
\Big|\Big\langle \Gamma(\frac{1}{2},0,0,0),\ \Lambda_{l}\Big\rangle_{\mathcal{G}}\Big| \leq 2\Big\langle \chi_{R}\big|v(\cdot,\tau)-\sqrt{2}\big|, \ (1+|y|^2)\Big\rangle_{\mathcal{G}}\leq \tau^{-\frac{13}{20}}R_0^4(\tau).\label{eq:twoRegions}
\end{align}
To see this, we insert $1=1_{|y|\leq R_{0}(\tau)}+\big(1-1_{|y|\leq R_{0}(\tau)}\big)$ into the inner product to find
\begin{align}
\Big\langle \chi_{R}\big|v(\cdot,\tau)-\sqrt{2}\big|, \ (1+|y|^2)\Big\rangle_{\mathcal{G}}
=&\Big\langle \chi_{R}1_{|y|\leq R_{0}(\tau)}\big|v(\cdot,\tau)-\sqrt{2}\big|, \ (1+|y|^2)\Big\rangle_{\mathcal{G}}\nonumber\\
&+\Big\langle \chi_{R}\big(1-1_{|y|\leq R_{0}(\tau)}\big)\big|v(\cdot,\tau)-\sqrt{2}\big|, \ (1+|y|^2)\Big\rangle_{\mathcal{G}}\nonumber\\
=&Q_1+Q_2
\end{align}where $Q_1$ and $Q_2$ are naturally defined, and $1_{|y|\leq R_{0}(\tau)}$ is the standard Heaviside function, see \eqref{eq:heavi}. 
For $Q_1$, the estimate for $|v(\cdot,\tau)-\sqrt{2}|$ in \eqref{eq:cm1} implies that
\begin{align}
Q_1\leq \tau^{-\frac{13}{20}}.
\end{align} For $Q_2$, the integration takes place in the region $|y|\geq R_0(\tau)$, the rapid decay of $e^{-\frac{1}{4}|y|^2}$ and that $|v(\cdot,\tau)-\sqrt{2}|\leq 1$ imply
\begin{align}
Q_2\leq \tau^{-\frac{13}{20}} R_0^4(\tau),
\end{align} where, $R_0=\mathcal{O}(\sqrt{ln\tau})$ is defined in \eqref{eq:defR0T}. 

These two estimates imply the desired \eqref{eq:twoRegions}.

These two observations make the Fixed Point Theorem applicable after we limit the consideration to the following subset of a standard Banach space
\begin{align}
|a(\tau)-\frac{1}{2}|,\ |B(\tau)|,\ \sum_{k=1}^{3}|\Omega_{k}(\tau)|,\ \sum_{l=1,2}|\alpha_l(\tau)|\leq \tau^{-\frac{31}{50}}.\label{eq:sample}
\end{align} Now we apply the fixed point theorem to prove the desired \eqref{eq:paramePrelim}.

What is left is to prove \eqref{eq:weightPrelim1} and \eqref{eq:weightPrelim2}. 

To prepare for proving the first estimate in \eqref{eq:weightPrelim1} we divide $\chi_{R}w$ into two terms
\begin{align}
\|\chi_{R}w\|_{\mathcal{G}}\leq \|\chi_{R}1_{|y|\leq R_0}w\|_{\mathcal{G}}+\|\chi_{R}1_{|y|> R_0}w\|_{\mathcal{G}}.
\end{align} For the first term, the estimate for $v-\sqrt{2}$ in \eqref{eq:cm1}, the decomposition of $v$ and \eqref{eq:paramePrelim} imply that, recall that $\tau\gg 1,$
\begin{align}
\Big\|\chi_{R}1_{|y|\leq R_0}w(\cdot,\tau)\Big\|_{\mathcal{G}}\leq \frac{1}{2}\tau^{-\frac{3}{5}};
\end{align} for the second term, \eqref{eq:cm1} and the definition of $R_0(\tau)$ in \eqref{eq:defR0T} imply
\begin{align}
\|\chi_{R}1_{|y|> R_0}w\|_{\mathcal{G}}\lesssim \|1_{|y|> R_0}\|_{\mathcal{G}}\ll \frac{1}{2}\tau^{-\frac{3}{5}}.
\end{align}
These two estimates obviously imply the desired estimate for the first one in \eqref{eq:weightPrelim1},
\begin{align}
\|\chi_{R}w\|_{\mathcal{G}}\leq \tau^{-\frac{3}{5}}.
\end{align}

The proof of the other two estimates in \eqref{eq:weightPrelim1} is similar, thus is skipped.

Among those in \eqref{eq:weightPrelim2}, we start with treating $\|\partial_{y_1}^2\chi_{R}w\|_{\mathcal{G}}.$
Integrate by parts in $y_1$ to obtain
\begin{align*}
\|\partial_{y_1}^2\chi_{R}w\|_{\mathcal{G}}^2= &-\Big\langle \partial_{y_1} e^{-\frac{1}{4}|y|^2}\partial_{y_1}^2\chi_{R}w,\ \partial_{y_1}\chi_{R}w\Big\rangle\\
=&\frac{1}{2}\Big\langle y_1 \partial_{y_1}^2\chi_{R}w,\ \partial_{y_1}\chi_{R}w\Big\rangle_{\mathcal{G}}-\Big\langle  \partial_{y_1}^3\chi_{R}w,\ \partial_{y_1}\chi_{R}w\Big\rangle_{\mathcal{G}}.
\end{align*} This, together with that $\sum_{|k|\leq 3}|\nabla_{y_1}^{k}w|\leq 1$ when $|y|\leq (1+\epsilon)R$, and $\sum_{|k|=1,2}\|(\nabla_{y}^{k}\chi_{R}) (1+|y_1|)\|_{\mathcal{G}}\ll 1$, and \eqref{eq:weightPrelim1}, implies the desired estimate
\begin{align}
\|\partial_{y_1}^2\chi_{R}w\|_{\mathcal{G}}\lesssim \|\partial_{y_1}\chi_{R}w\|_{\mathcal{G}}^{\frac{1}{2}}\lesssim \tau^{-\frac{3}{10}}.
\end{align}

The other estimates in \eqref{eq:weightPrelim2} can be obtained similarly. We skip the details here.
\end{proof}


\section{Derivation of \eqref{eq:beta2Eqn}}\label{sec:beta2Eqn}
Here we only estimate the governing equation for $\Omega_2$, that for $\Omega_3$ can be derived identically. 

Recall that the definition of $R(\tau)$ in \eqref{eq:defRTau} depends on some large constant $\tau_0$. To facilitate later discussions we require $\tau_0$ to be large enough so that
\begin{align}
\kappa(\epsilon) R^{-\frac{1}{10}}(\tau_0)+\tau_0^{-\beta} R^{4}(\tau_0)\leq 1.\label{eq:EpsiTau}
\end{align}Recall the definitions of the inner product $\langle\cdot,\ \cdot\rangle_{\mathcal{G}},$ the norm $\|\cdot\|_{\mathcal{G}}$ from \eqref{def:Gin}.

We take a $\mathcal{G}$-inner product of the equation of $\partial_{\tau}( \chi_{R} w)$ in \eqref{eq:tildew3} and $ y cos\theta$ to find that\begin{align}
\begin{split}\label{eq:innercos}
\Big\langle \partial_{\tau}( \chi_{R} w),\  y cos\theta\Big\rangle_{\mathcal{G}}
=&\Big\langle -L(\chi_{R}w)+ (G+N_1+N_2)\chi_{R}+ \mu(w),\ y cos\theta\Big\rangle_{\mathcal{G}},
\end{split}
\end{align}where we use the identity $\langle F \chi_{R},\  y cos\theta\rangle_{\mathcal{G}}=0$, recall that $F$ is independent of $\theta.$

The orthogonality condition $\chi_R w\perp_{\mathcal{G}}   y cos\theta $ in \eqref{eq:orthow} will make many terms disappear. Indeed, for the term on the left hand side,
\begin{align}
\Big\langle \partial_{\tau}( \chi_{R} w),\  y cos\theta\Big\rangle_{\mathcal{G}}=\partial_{\tau}\Big\langle  \chi_{R} w,\  y cos\theta\Big\rangle_{\mathcal{G}}=0;
\end{align}
and for the first term on the right hand side, 
\begin{align}
\Big\langle -L(\chi_{R}w),\  y cos\theta\Big\rangle_{\mathcal{G}}=0.
\end{align}
based on the three facts: (1) use $\partial_{\theta}^2 cos\theta=-cos\theta$ to integrate by parts in $\theta$
\begin{align*}
\Big\langle \chi_{R} (V_{a,B}^{-2}\partial_{\theta}^2w+V_{a,B}^{-2}w), \  ycos\theta\Big\rangle_{\mathcal{G}}=0,
\end{align*} (2)  $e^{-\frac{1}{4}|y|^2} y $ is an eigenvector of $(-\Delta+\frac{1}{2}y\cdot \nabla_{y})^*$ and (3) $\chi_{R}w\perp_{\mathcal{G}}  y cos\theta$.

For the $G$-term, by parity, all the terms, except $\Omega_2-$term, make no contribution, and hence
\begin{align*}
\Big\langle G \chi_{R},\ y cos\theta\Big\rangle_{\mathcal{G}}=&- \frac{\pi}{3}\int_{\mathbb{R}^3}\chi_{R}(y) |y|^2 e^{-\frac{1}{4}|y|^2}\ dy\  \frac{d}{d\tau}\Omega_2
=- \frac{\pi}{3}\int_{\mathbb{R}^3} |y|^2 e^{-\frac{1}{4}|y|^2}\ dy \Big(1+o(1)\Big)\  \frac{d}{d\tau}\Omega_2.
\end{align*}

For the last term,
\begin{align}
\Big|\Big\langle \mu(w),\  y cos\theta\Big\rangle_{\mathcal{G}}\Big|\lesssim e^{-\frac{1}{5}R^2(\tau)} \tilde\delta,\label{eq:tildechi3}
\end{align} where the factor $\tilde\delta$ is from the bound
\begin{align}
\sum_{|k|+l=0}^4|\partial_{y}^{k}\partial_{\theta}^{l}w| \leq \delta+\tau^{-\frac{1}{2}}\leq \tilde\delta,\ \text{when}\ |y|\leq (1+\epsilon)R(\tau)=\mathcal{O}(\sqrt{\ln \tau}),\label{eq:estW}
\end{align}implied by \eqref{eq:decomVToW}, \eqref{eq:paramePrelim} and \eqref{eq:condition1}; and the factor $e^{-\frac{1}{5}R^2}$ is from 
\begin{align}
\int_{|y|\geq R} e^{-\frac{1}{4}|y|^2} |y|\ dy^3
&\leq  e^{-\frac{1}{5}R^2},
\end{align}
since $\mu(w)$ is supported by the set $\Big\{y\ \Big| \ |y|\in [R, \ (1+\epsilon )R]\Big\}$, and $e^{-\frac{1}{4}|y|^2}$ decays rapidly.

Collect the estimates to find a new form for \eqref{eq:innercos}
\begin{align}
|\frac{d}{d\tau}\Omega_2|\lesssim \Big|\Big\langle (N_1+N_2)\chi_{R}, \  y cos\theta\Big\rangle_{\mathcal{G}}\Big|+e^{-\frac{1}{5}R^2(\tau)} \tilde\delta.\label{eq:simN1N2}
\end{align}
We claim that, recall that $H_2$ is defined in \eqref{eq:defH2},
\begin{align}
&\Big|\Big\langle (N_1+N_2)\chi_{R}, \ y cos\theta\Big\rangle_{\mathcal{G}}\Big|\lesssim H_{2}+e^{-\frac{1}{5}R^2(\tau)} \tilde\delta^2. \label{eq:estN1N21}
\end{align}

Suppose the claim holds, \eqref{eq:simN1N2} and \eqref{eq:estN1N21} imply the desired estimate for $\Omega_2$ in \eqref{eq:beta2Eqn}.

What is left is to prove the claim \eqref{eq:estN1N21}. Here we need some cancellations.

We choose to study only four terms on the left hand side of \eqref{eq:estN1N21}, specifically $F_{k},\ k=1,2,3,4,$ where $F_1$ is defined in terms of $N_2$,
\begin{align}
F_1:=\Big\langle \chi_{R} N_2(\eta),\  y cos\theta\Big\rangle_{\mathcal{G}}.
\end{align} $F_{k},\ k=2,3,4,$ are parts of $\langle \chi_{R} N_1,\   y cos\theta\rangle_{\mathcal{G}},$
\begin{align}
\begin{split}
F_2:=&\Big\langle \chi_{R}\frac{(\partial_{y_1}v)^2}{1+|\nabla_y v|^2+v^{-2}(\partial_{\theta}v)^2}\partial_{y_1}^2 v,\  y cos\theta\Big\rangle_{\mathcal{G}},\\
F_3:=&\Big\langle \chi_{R} v^{-2} \frac{(v^{-1}\partial_{\theta}v)^{2} \partial^{2}_{\theta}v}{1+|\nabla_y v|^2+(\frac{\partial_{\theta}v}{v})^2},\  y cos\theta\Big\rangle_{\mathcal{G}},\\
F_4:=&\Big\langle \chi_{R} \frac{1}{1+|\nabla_y v|^2+(\frac{\partial_{\theta}v}{v})^2}\frac{(\partial_{\theta}v)^2}{v^3},\   y cos\theta\Big\rangle_{\mathcal{G}}.
\end{split}
\end{align}
The reason for choosing these terms, instead of the others, is following. An obstacle of proving $F_2$ decays rapidly is that all of the factors of $\frac{(\partial_{y_1}v)^2}{1+|\nabla_y v|^2+v^{-2}(\partial_{\theta}v)^2}\partial_{y_1}^2 v$ decay slowly. We have to integrate by parts in $\theta$ to make the rapidly decaying $\theta-$derivative of $v$ to contribute. To prove $F_4$ decays rapidly we need to observe some cancellations. 
Controlling $F_3$ is the easiest, we use it to illustrate the techniques of controlling most of the other terms.

We start with observing some cancellations in $F_1$. By definition
\begin{align*}
N_{2}(\eta) =-(v^{-1}-V_{a,B}^{-1}+V_{a,B}^{-2}\eta)+(v^{-2}-V_{a,B}^{-2})\partial_{\theta}^2 \eta.
\end{align*}
By taking a $\mathcal{G}-$inner product with $ y cos\theta$, the $\theta-$independent $V_{a,B}^{-1}$ does not contribute. Then we integrate by parts, twice, using the identities
$
\partial_{\theta}^2 cos\theta=-cos\theta
$ and $\partial_{\theta}v=\partial_{\theta}\eta$ to obtain
\begin{align*}
\Big\langle \chi_{R}V_{a,B}^{-2}\eta,\  y cos\theta\Big\rangle_{\mathcal{G}}=&-\Big\langle \chi_{R} V_{a,B}^{-2}\partial_{\theta}^2\eta,\  y cos\theta\Big\rangle_{\mathcal{G}},\\
\Big\langle \chi_{R}v^{-1},\  y cos\theta\Big\rangle_{\mathcal{G}}=&\Big\langle \chi_{R}v^{-2}\partial_{\theta}^2 \eta, \ y cos\theta\Big\rangle_{\mathcal{G}}-2\Big\langle v^{-3}(\partial_{\theta}\eta)^2, \  y cos\theta\Big\rangle_{\mathcal{G}}.
\end{align*}
This transforms $F_1$ into a simpler form
\begin{align}
F_1=2\Big\langle v^{-3}(\partial_{\theta}\eta)^2 \chi_{R},\  y cos\theta\Big\rangle_{\mathcal{G}}.\label{eq:similar}
\end{align}

Still we need to observe more cancellations. Decompose $\eta$ as in \eqref{eq:decomW} and insert the identity $1=\chi_{R}+(1-\chi_{R})$ before $w$ to find
\begin{align}
\partial_{\theta}\eta
=&\partial_{\theta} \Big[{\Omega}_2\cdot y cos\theta + {\Omega}_3\cdot y sin\theta +\alpha_1 cos\theta +\alpha_2 sin\theta\Big]+\partial_{\theta}\chi_{R}w+\partial_{\theta}(1-\chi_{R})w.
\end{align} 
Plug this into the definition of $F_1$ to find
\begin{align}
F_1=&2\Big\langle v^{-3}\Big[ \partial_{\theta} \big[{\Omega}_2\cdot y cos\theta + {\Omega}_3\cdot y sin\theta +\alpha_1 cos\theta +\alpha_2 sin\theta\big]\Big]^2\chi_{R},\  y cos\theta\Big\rangle_{\mathcal{G}}\nonumber\\
&+4\Big\langle v^{-3} \partial_{\theta} \big[{\Omega}_2\cdot y cos\theta + {\Omega}_3\cdot y sin\theta +\alpha_1 cos\theta +\alpha_2 sin\theta\big]\ \chi_{R}\partial_{\theta}\chi_{R}w,\  y cos\theta\Big\rangle_{\mathcal{G}}\nonumber\\
&+2\Big\langle v^{-3}(\partial_{\theta}\chi_{R}w)^2\ \chi_{R},\  y cos\theta\Big\rangle_{\mathcal{G}}+\mathcal{O}(\tilde\delta e^{-\frac{1}{5}R^2})\nonumber\\
=&F_{11}+F_{12}+F_{13}+\mathcal{O}(\tilde\delta^2 e^{-\frac{1}{5}R^2}),\label{eq:defF123}
\end{align}where the terms $F_{1,k},\ k=1,2,3,$ are naturally defined, and the rapidly decaying $\mathcal{O}(\tilde\delta e^{-\frac{1}{5}R^2})$ is used to control the terms having a factor $(1-\chi_{R})w$ and its derivatives, which is supported by the set $\Big\{ y\ \Big|\ |y|\geq R\Big\}$ ------ a good example is, 
\begin{align}
\begin{split}\label{eq:e5R}
\Big|\Big\langle v^{-3}\chi_{R}\ \partial_{\theta}\chi_{R}w\ &\partial_{\theta}(1-\chi_{R})w,\  y cos\theta\Big\rangle_{\mathcal{G}}\Big|\lesssim \tilde\delta^2 \int_{|y|\geq R} e^{-\frac{1}{4}|y|^2} |y|\ d^3y\lesssim \tilde\delta^2 e^{-\frac{1}{5}R^2},
\end{split}
\end{align}here we use the estimates in \eqref{eq:estW} to find $
|\Big(\partial_{\theta}\chi_{R}w\Big)\ \partial_{\theta}\Big((1-\chi_{R})w\Big)|\lesssim \tilde\delta^2.
$

For the terms in \eqref{eq:defF123}, we control $F_{12}$ and $F_{13}$ by direct computation
\begin{align}
\begin{split}\label{eq:f1213}
|F_{12}|\lesssim & \|\chi_{R}\partial_{\theta}w\|_{\mathcal{G}} \Big[\sum_{k=2,3}|\Omega_k|+\sum_{l=1,2}|\alpha_l|\Big],\\
|F_{13}|\lesssim & \Big\langle (\partial_{\theta}w)^2\ \chi_{R}^2,\  |y| \Big\rangle_{\mathcal{G}}
\lesssim  R\ \|\chi_{R}\partial_{\theta}w \|_{\mathcal{G}}^2.
\end{split}
\end{align}

Controlling $F_{11}$ is the most involved since we need to observe some cancellations. The facts 
$$\int_{0}^{2\pi} cos^3\theta\ d\theta=\int_{0}^{2\pi} sin^2\theta cos\theta\ d\theta=\int_{0}^{2\pi} sin\theta cos^2 \theta\ d\theta=0$$
make
\begin{align} 
\Big\langle \Big[ \partial_{\theta} \big[{\Omega}_2\cdot y cos\theta + {\Omega}_3\cdot y sin\theta +\alpha_1 cos\theta +\alpha_2 sin\theta\big]\Big]^2,\ cos\theta\Big\rangle_{\theta}=0.\label{eq:zeros}
\end{align} 
Here and in the rest of the paper $\langle \cdot,\ \cdot\rangle_{\theta}$ signifies the standard inner product in the $\theta$-variable. This forces the rapidly decaying $\theta-$dependent part of $v$ to contribute. Mathematically, we integrate by parts in $\theta$ to obtain
\begin{align*}
\Big\langle v^{-3} sin^2\theta,\ cos\theta\Big\rangle_{\theta}=&-\frac{1}{3}\Big\langle \partial_{\theta}v^{-3},\ sin^3\theta\Big\rangle_{\theta};\\
\Big\langle v^{-3} sin\theta cos\theta,\ cos\theta\Big\rangle_{\theta}=&\frac{1}{3}\Big\langle \partial_{\theta}v^{-3},\ cos^3\theta\Big\rangle_{\theta};\\
\Big\langle v^{-3} cos^2\theta,\ cos\theta\Big\rangle_{\theta}=& \Big\langle \partial_{\theta}v^{-3} ,\ sin\theta\Big\rangle_{\theta}+\frac{1}{3}\Big\langle \partial_{\theta}v^{-3},\ sin^3\theta\Big\rangle_{\theta}.
\end{align*} Plug these into the definition of $F_{11}$, and then decompose $v$ as in \eqref{eq:decomVToW} and insert $1=\chi_{R}+(1-\chi_{R})$ to find
\begin{align}
\begin{split}
|F_{11}|\lesssim &\big[\sum_{k=2,3}|\Omega_k|+\sum_{l=1,2}|\alpha_l|\big]^2\ \Big\langle |\partial_{\theta}v^{-3}|\ \chi_{R}, \  (1+|y|)^3\Big\rangle_{\mathcal{G}}\\
\lesssim & \Big[\sum_{k=2,3}|\Omega_k|+\sum_{l=1,2}|\alpha_l|\Big]^3+\Big[\sum_{k=2,3}|\Omega_k|+\sum_{l=1,2}|\alpha_l|\Big]^2 \|\chi_{R}\partial_{\theta}w \|_{\mathcal{G}}.
\end{split}
\end{align}
This, together with \eqref{eq:defF123} and \eqref{eq:f1213}, implies the desired estimate
\begin{align}
|F_1|\lesssim H_2+\tilde\delta^2 e^{-\frac{1}{5}R^2}.\label{eq:similar10}
\end{align}

For $F_4$, use the identity $\partial_{\theta}v=\partial_{\theta}\eta$ and compute directly to have 
\begin{align}
F_4=&\Big\langle \chi_{R} \frac{1}{1+|\nabla_y v|^2+(\frac{\partial_{\theta}v}{v})^2}\frac{(\partial_{\theta}\eta)^2}{v^3},\  y cos\theta\Big\rangle_{\mathcal{G}}\nonumber\\
=& \Big\langle \chi_{R} \frac{(\partial_{\theta}\eta)^2}{v^3},\   y cos\theta\Big\rangle_{\mathcal{G}}
-\Big\langle \chi_{R} \frac{|\nabla_y v|^2+(\frac{\partial_{\theta}v}{v})^2}{1+|\nabla_y v|^2+(\frac{\partial_{\theta}v}{v})^2}\frac{(\partial_{\theta}\eta)^2}{v^3},\   y cos\theta\Big\rangle_{\mathcal{G}}.
\end{align}
The first term is exactly $\frac{1}{2}F_1$, see \eqref{eq:similar}, which was estimated in \eqref{eq:similar10}. 
It is easy to control the second. These makes
\begin{align}
|F_4|\lesssim H_2+\tilde\delta^2 e^{-\frac{1}{5}R^2}.
\end{align}

For $F_2$ we start with transforming the expression. The idea is to force the rapidly decaying $\theta-$dependent parts of $v$ to contribute.
Integrate by parts in $\theta$ to have, 
\begin{align}
F_2=&-\Big\langle \chi_{R}\partial_{\theta}\Big[\frac{(\partial_{y_1}v)^2}{1+|\nabla_y v|^2+v^{-2}(\partial_{\theta}v)^2}\partial_{y_1}^2 v\Big],\ y sin\theta\Big\rangle_{\mathcal{G}}\label{eq:inteTheta}\\
=&-\Big\langle \chi_{R}\frac{(\partial_{y_1}v)^2}{1+|\nabla_y v|^2+v^{-2}(\partial_{\theta}v)^2}\partial_{y_1}^2\partial_{\theta} w,\ y sin\theta\Big\rangle_{\mathcal{G}}\nonumber\\
&-\Big\langle \chi_{R}\Big[\partial_{\theta}\frac{(\partial_{y_1}v)^2}{1+|\nabla_y v|^2+v^{-2}(\partial_{\theta}v)^2}\Big]\partial_{y_1}^2 V_{a,B},\  y sin\theta\Big\rangle_{\mathcal{G}}\nonumber\\
&-\Big\langle \chi_{R}\Big[\partial_{\theta}\frac{(\partial_{y_1}v)^2}{1+|\nabla_y v|^2+v^{-2}(\partial_{\theta}v)^2}\Big]\partial_{y_1}^2 w,\  y sin\theta\Big\rangle_{\mathcal{G}}\nonumber\\
=&F_{21}+F_{22}+F_{23}\label{eq:F2123}
\end{align}where the terms $F_{21}$, $F_{22}$ and $F_{23}$ are naturally defined, and in the second step we use the following two identities, implied by the
decomposition of $v$ in \eqref{eq:decomVToW},
\begin{align*}
\partial_{y_1}^2\partial_{\theta}v=\partial_{y_1}^2\partial_{\theta}w\ \text{and}\ \partial_{y_1}^2 v=\partial_{y_1}^2 V_{a,B}+\partial_{y_1}^2 w.
\end{align*}

For the term $F_{22},$ we obtain
\begin{align}
|F_{22}|\lesssim &|B|\Big\langle \chi_{R}  \Big[|\partial_{y_1}v||\nabla_{y}\partial_{\theta}v|+|\partial_{\theta}v||\partial_{\theta}^2v|+|\partial_{\theta}v|^2\Big],\  |y| \Big\rangle_{\mathcal{G}}\label{eq:f222}
\end{align} by two facts: (1) $|\partial_{\theta}v|\ll 1$ by \eqref{eq:condition1}; (2) since $|a-\frac{1}{2}|, \ |B|\leq \tau^{-\frac{3}{5}}$ by \eqref{eq:paramePrelim}, 
\begin{align}
\sum_{|k|=2}|\nabla_{y}^k V_{a,B}|\lesssim |B|+|B|^2 |y|^2\leq 2|B|\ \text{when} \ |y|\leq (1+\epsilon)R(\tau)=\mathcal{O}(\sqrt{\ln \tau}).\label{eq:y12Vab}
\end{align}
The slowest decaying term in \eqref{eq:f222} is $\Big\langle \chi_{R}  |\partial_{y_1}v||\partial_{y_1}\partial_{\theta}v|,\ |y| \Big\rangle_{\mathcal{G}}$ since the others depend more on the fast decaying $\theta-$derivatives of $v$. For this term, we decompose $v$ as in \eqref{eq:decomVToW} to find
\begin{align}
&|\partial_{y_1}v|\ |\partial_{y_1}\partial_{\theta}v|
\leq \Big[ |B||y|+\sum_{k=1}^{3}|\Omega_k|+|\partial_{y_1} w|\Big] \Big[\sum_{k=2,3}|\Omega_k|+|\nabla_{y}\partial_{\theta}w|\Big],
\end{align} where we use that, as in deriving \eqref{eq:y12Vab},
\begin{align}
|\nabla_{y}V_{a,B}|\lesssim |B||y|\ \text{when}\ |y|\leq (1+\epsilon)R=\mathcal{O}(\ln\tau).\label{eq:nablayV}
\end{align} Insert the identity $1=\chi_{R}+(1-\chi_{R})$ before $w$ and compute directly to obtain
\begin{align*}
\Big\langle \chi_{R}  |\partial_{y_1}v||\nabla_{y}\partial_{\theta}v|,&\ |y| \Big\rangle_{\mathcal{G}}\lesssim \sum_{k=2,3}|\Omega_k|\  \Big[ |B|+\sum_{k=1}^{3}|\Omega_k|+\|\partial_{y_1}\chi_{R}w\|_{\mathcal{G}}\Big]\\
&+\|\partial_{\theta}\nabla_{y}\chi_{R}w\|_{\mathcal{G}} \Big[ |B|+\sum_{k=1}^{3}|\Omega_k|+R\|\partial_{y_1}\chi_{R}w\|_{\mathcal{G}}\Big]+\tilde\delta e^{-\frac{1}{5}R^2}.
\end{align*} This and the definition of $H_2$ in \eqref{eq:defH2} imply
\begin{align}
|F_{22}|\lesssim & H_{2}+\tilde\delta e^{-\frac{1}{5}R^2}.\label{eq:estF22ex}
\end{align}

For $F_{21}$, to avoid estimating $\|\chi_{R} \partial_{y_1}^2\partial_{\theta} w\|_{\mathcal{G}}$, we integrate by parts in $y_1$ to find,
\begin{align}
F_{21}=&\Big\langle \partial_{y_1}\partial_{\theta}w,\ \partial_{y_1}\Big[\chi_{R} \frac{(\partial_{y_1}v)^2}{1+|\nabla_y v|^2+v^{-2}(\partial_{\theta}v)^2}e^{-\frac{1}{4}|y|^2} y cos\theta\Big]\Big\rangle.
\end{align}
Similar to obtaining \eqref{eq:estF22ex}
\begin{align}
\begin{split}
|F_{21}|\lesssim &\Big\|\partial_{y_1}\partial_{\theta}\chi_{R}w\Big\|_{\mathcal{G}} \Big[|B|^2+\sum_{k=1}^3|\Omega_k|^2+
\tilde\delta R^2\sum_{k=1,2}\Big\|\partial_{y_1}^{k}\chi_{R}w\Big\|_{\mathcal{G}}\Big]+\tilde\delta^2 e^{-\frac{1}{5}R^2}\\
\lesssim & H_{2}+\tilde\delta e^{-\frac{1}{5}R^2}.
\end{split}
\end{align}

Similarly for $F_{23}$,
\begin{align}
\begin{split}
|F_{23}|\lesssim& \Big\|\partial_{y_1}^2 \chi_{R}w\Big\|_{\mathcal{G}}\Big[|B|+\sum_{k=1}^{3}|\Omega_k|+\sum_{l=1,2}|\alpha_l|\Big]\Big[\sum_{k=2}^{3}|\Omega_k|+\sum_{l=1,2}|\alpha_l|\Big]\\
&+\tilde\delta \Big\|\partial_{y_1}^2 \chi_{R}w\Big\|_{\mathcal{G}} \Big[\|\nabla_{y}\partial_{\theta} \chi_{R}w\|_{\mathcal{G}}+\|\partial_{\theta}^2 \chi_{R}w\|_{\mathcal{G}}\Big]+\tilde\delta^2 e^{-\frac{1}{5}R^2}.
\end{split}
\end{align}

We estimated all the terms in \eqref{eq:F2123}. These estimates imply the desired estimate
\begin{align}
|F_2|\lesssim H_2+\tilde\delta e^{-\frac{1}{5}R^2}.
\end{align}

For $F_3,$ compute directly to find,
\begin{align}
|F_3|\lesssim \Big\langle \chi_{R} (\partial_{\theta}v)^2 |\partial_{\theta}^2 v|,\ |y|\Big\rangle_{\mathcal{G}}
\lesssim& \Big[\sum_{k=2,3}|\Omega_k|+\sum_{l=1,2}|\alpha_l|\Big]^3
+\tilde\delta R  \Big\| \partial_{\theta}^{2}\chi_{3,R}w\Big\|_{\mathcal{G}}^2+\tilde\delta^2 e^{-\frac{1}{5}R^2}\nonumber\\
\lesssim & H_2+\tilde\delta e^{-\frac{1}{5}R^2}.
\end{align}

\section{Proof of \eqref{eq:wL2Est} }\label{sec:wL2Est}
We start with deriving a governing equation for $\chi_{R}w$ from \eqref{eq:tildew3},
\begin{align}
\frac{1}{2}\frac{d}{d\tau}\Big\langle  w\chi_{R},\  w\chi_{R}\Big\rangle_{\mathcal{G}}
=&-\Big\langle e^{-\frac{1}{8}|y|^2} w\chi_{R}, \ \tilde{L} e^{-\frac{1}{8}|y|^2} w\chi_{R}\Big\rangle+\Big\langle  w\chi_{R},\  F\chi_{R}\Big\rangle_{\mathcal{G}}+\Big\langle  w\chi_{R},\  G\chi_{R}\Big\rangle_{\mathcal{G}}\nonumber\\
&+\Big\langle  w\chi_{R},\  N_1\chi_{R}\Big\rangle_{\mathcal{G}}+\Big\langle  w\chi_{R},\ N_2(\eta)\chi_{R}\Big\rangle_{\mathcal{G}}+H_{\chi}\nonumber\\
=&\frac{1}{2}\sum_{k=1}^{5}D_k+H_{\chi},\label{eq:L2norm}
\end{align} where the linear operator $\tilde{L}$ is defined as
\begin{align}
\tilde{L}:=e^{-\frac{1}{8} |y|^2} L e^{\frac{1}{8} |y|^2} =-\Delta_{y}+\frac{1}{16}|y|^2-\frac{3}{4}-V_{a,B}^{-2}\partial_{\theta}^2-\frac{1}{2}-V_{a,B}^{-2},\label{eq:defTL}
\end{align} $D_{k},\ k=1,2,\cdots,5,$ are naturally defined, and $H_{\chi}$ contains all the terms depending on some derivative of $\chi_{R}$,
\begin{align*}
H_{\chi}:=&\Big\langle  w\chi_{R},\ \mu(w)\Big\rangle_{\mathcal{G}}.
\end{align*}

It is easy to control $H_{\chi}$, since all the terms in $\mu(w)$ depend on some derivative of $\chi_{R}$, and hence they are supported by the set $\Big\{y\ \Big|\ |y|\in [R, \ (1+\epsilon )R]\Big\}$. Similar to obtaining \eqref{eq:e5R},
\begin{align}
|H_{\chi}|\lesssim e^{-\frac{1}{5}R^{2}}\tilde\delta^2.
\end{align}

For $D_1$ we need two estimates and then combine them together. We integrate by parts in $y-$ or $\theta-$variables and find
\begin{align}
\begin{split}\label{eq:D1Two}
D_1\leq &-2\Big\| \nabla_y (e^{-\frac{1}{8}|y|^2} w\chi_{R})\Big\|_2^2-\frac{1}{8} \Big\| y  w\chi_{R}\Big\|_{\mathcal{G}}^2- (1-\tau^{-\frac{1}{2}})\Big\|\partial_{\theta} w\chi_{R} \Big\|_{\mathcal{G}}^2\\
&+[\frac{7}{2}+\tau^{-\frac{1}{2}}]\Big\|   w\chi_{R}\Big\|_{\mathcal{G}}^2,
\end{split}
\end{align} where we apply \eqref{eq:paramePrelim} to find that, recall that $\tau\geq \tau_0\gg 1,$
\begin{align}
|V_{a,B}^{-2}-\frac{1}{2}|\leq |a-\frac{1}{2}|+|y^{T}B y|\leq \tau^{-\frac{1}{2}},\ \text{when}\ |y|\leq (1+\epsilon)R(\tau)=\mathcal{O}(\sqrt{ln\tau}).\label{eq:dfV12}
\end{align}

Now we derive a second estimate for $D_1.$
By \eqref{eq:orthow}, $e^{-\frac{1}{8}|y|^2}w\chi_{R}$ is orthogonal to all the eigenvectors with eigenvalues $0,\frac{1}{2},1$, of the linear operator $-\Delta_y+\frac{1}{16}|y|^2-\frac{3}{4}-\frac{1}{2}\partial_{\theta}^2$. Thus
\begin{align}
\Big\langle e^{-\frac{1}{8}|y|^2}w\chi_{R},\ \Big(-\Delta_y+\frac{1}{16}|y|^2-\frac{3}{4}-1-\frac{1}{2}\partial_{\theta}^2\Big)e^{-\frac{1}{8}|y|^2}w\chi_{R}\Big\rangle \geq \frac{1}{2} \|w\chi_{R}\|_{\mathcal{G}}^2.
\end{align}
This and \eqref{eq:dfV12} imply
\begin{align}
D_{1}\leq -(1-\tau^{-\frac{1}{2}})\|   w\chi_{R}\|_{\mathcal{G}}^2.\label{eq:D1One}
\end{align}

Now we have obtained two estimates for $D_1$, and are ready to derive the desired one: divide $D_1$ into $D_1=\frac{9}{10}D_1+\frac{1}{10}D_1$, apply \eqref{eq:D1One} on $\frac{9}{10}D_1$, and apply \eqref{eq:D1Two} on $\frac{1}{10}D_1$ to obtain
\begin{align}
\begin{split}\label{eq:dominant}
D_1\leq &-\frac{1}{10}\Big[\Big\| \nabla_y e^{-\frac{1}{8}|y|^2} w\chi_{R}\Big\|_2^2+\frac{1}{8} \Big\| y  w\chi_{R}\Big\|_{\mathcal{G}}^2+\frac{1}{2} \Big\|\partial_{\theta}w\chi_{R} \Big\|_{\mathcal{G}}^2\Big] -\frac{1}{2}\Big\|  w\chi_{R}\Big\|_{\mathcal{G}}^2.
\end{split}
\end{align}
The advantage is that the negative parts can be used to cancel some positive terms produced in the estimates for $|D_{k}|, \ k=2,3,4,5$ below.

For $D_2$, we claim that
for some $C>0$,
\begin{align}
|D_2| \leq \frac{1}{20} \| w\chi_{R}\|_{\mathcal{G}}^2+C \Big[H_{1}^2(\tau)+|B|^6+|a_{\tau}|^2 |B|^4\Big]+\tilde\delta  e^{-\frac{1}{5}R^2}.\label{eq:D2Est}
\end{align}
To see this, we observe that the orthogonality conditions imposed on $ w\chi_{R}$ cancel many terms. For example, for the first term of $F(B,a)$ in \eqref{eq:source},
\begin{align}
\begin{split}
\Upsilon:=&\Big|\Big\langle  w\chi_{R},\  \chi_{R}\frac{y^{T}(\partial_{\tau}B +B^{T}B) y}{2 \sqrt{2a} \sqrt{2+y^{T}B y}}\Big\rangle_{\mathcal{G}}\Big|\\
\lesssim &\Big| \Big\langle  w\chi_{R},\  y^{T}(\partial_{\tau}B +B^{T}B) y\ \Big[\frac{1}{2 \sqrt{2a} \sqrt{2+y^{T}B y}}-\frac{1}{2 \sqrt{2a} \sqrt{2}}\Big]\Big\rangle_{\mathcal{G}}\Big|+\tilde\delta e^{-\frac{1}{5}R^2}\\
\lesssim &\Big\| w\chi_{R}\Big\|_{\mathcal{G}} |B|\ H_1+\tilde\delta  e^{-\frac{1}{5}R^2}
\end{split}
\end{align} where in the second step we use the estimate
\begin{align}
\Big|\Big\langle w\chi_{R},\  (\chi_{R}-1)\frac{y^{T}(\partial_{\tau}B +B^{T}B) y}{2 \sqrt{2a} \sqrt{2+y^{T}B y}}\Big\rangle_{\mathcal{G}}\Big|\lesssim \tilde\delta e^{-\frac{1}{5}R^2}
\end{align} since the function $\chi_{R}-1$ is supported by the set $|y|\in [R,\ (1+\epsilon)R];$ and
$
\Big\langle  w\chi_{R},\ \frac{y^{T}(\partial_{\tau}B +B^{T}B) y}{2 \sqrt{2a} \sqrt{2}}\Big\rangle_{\mathcal{G}}=0
$ by the orthogonality conditions in \eqref{eq:orthow}; and we apply \eqref{eq:Beqn} to control $\partial_{\tau}B+B^{T}B$.

Similarly for the other terms in $D_2$,
\begin{align}
\begin{split}
|D_2| \lesssim &\|w\chi_{R}\|_{\mathcal{G}} |B|\ \Big[H_1+|B|^2+|a_{\tau}| |B| \Big]+\tilde\delta  e^{-\frac{1}{5}R^2},
\end{split}
\end{align}Finally, apply Schwartz inequality to obtain the desired \eqref{eq:D2Est}.

For $D_3$, the orthogonality conditions make many terms vanish. Compute directly to find
\begin{align*}
|D_3|=&\Big|\Big\langle  w\chi_{R},\ \Big(\frac{2a}{2+y^{T}B y}-a\Big)\Omega_1\cdot y\Big\rangle_{\mathcal{G}}\Big|+\tilde\delta e^{-\frac{1}{5}R^2}\\
\lesssim& |\Omega_1|\ |B|\ \|w\chi_{R}\|_{\mathcal{G}}+\tilde\delta e^{-\frac{1}{5}R^2},
\end{align*} hence, for some $C>0,$
\begin{align}
|D_3|\leq \frac{1}{20} \| w\chi_{R}\|_{\mathcal{G}}^2+C |\Omega_1|^2 |B|^2+\tilde\delta e^{-\frac{1}{5}R^2}.
\end{align}

The term $D_4$ is defined in terms of $N_1.$ By $\sum_{|k|+l=1}^4|\nabla_{y}^{k}\partial_{\theta}^{l}v|\ll 1$ in \eqref{eq:condition1},
\begin{align}
|N_1|\lesssim  |\nabla_y v|^2+|\partial_{\theta}v|^2.\label{eq:easyN1}
\end{align}
And thus
\begin{align}
|D_4|\lesssim& \Big\langle | w\chi_{R}|,\ \chi_{R}|\nabla_y v|^2\Big\rangle_{\mathcal{G}}+\Big\langle | w\chi_{R}|,\ \chi_{R}|\partial_{\theta}v|^2\Big\rangle_{\mathcal{G}}.\label{eq:easyD4}
\end{align} 
Similar to proving \eqref{eq:estF22ex}, we find: for some $C>0,$
\begin{align}
\begin{split}\label{eq:estD4}
|D_4|\leq& C\tilde\delta \Big[\| \nabla_y (e^{-\frac{1}{8}|y|^2} w\chi_{R})\|_2^2+\| y  w\chi_{R}\|_{\mathcal{G}}^2+\|\partial_{\theta} w\chi_{R} \|_{\mathcal{G}}^2\Big]+\frac{1}{20}\|   w\chi\|_{\mathcal{G}}^2\\
&+C \Big[|B|+\sum_{k=1}^{3}|\Omega_k|+\sum_{l=1,2}|\alpha_l|\Big]^4+
\tilde\delta e^{-\frac{1}{5}R^2}.
\end{split}
\end{align}

Next we estimate $D_5$. By the definition of $N_{2}(\eta)$ in \eqref{eq:defN2eta}
\begin{align*}
D_{5}&=2\Big\langle  w \chi_{R},\  V_{a,B}^{-2}v^{-1}\eta^2 \chi_{R}\Big\rangle_{\mathcal{G}}+2\Big\langle  w \chi_{R},\ v^{-2}V_{a,B}^{-2} (v+V_{a,B})\eta\partial_{\theta}^2\eta \chi_{R}\Big\rangle_{\mathcal{G}}\\
&=:D_{51}+D_{52},
\end{align*}with the terms $D_{51}$ and $D_{52}$ naturally defined.
For $D_{51}$, similar to obtaining \eqref{eq:estF22ex},
\begin{align}
|D_{51}|
\lesssim  \| w \chi_{R}\|_{\mathcal{G}} \ \Big[\sum_{k=1}^3 |\Omega_k|^2+\sum_{l=1,2}|\alpha_l|^2\Big]
+\tilde\delta \| w \chi_{R}\|_2^2+\tilde\delta e^{-\frac{1}{5}R^2}.\label{eq:estD51}
\end{align} 
For $D_{52}$, to avoid estimating $\|\partial_{\theta}^2 w \chi_{R} \|_{\mathcal{G}}$, we integrate by parts in $\theta$ to obtain
\begin{align*}
D_{52}  =&-2\Big\langle \partial_{\theta}  \chi_{R}w,\ v^{-2}V_{a,B}^{-2} (v+V_{a,B})\eta\partial_{\theta}\eta \chi_{R}\Big\rangle_{\mathcal{G}}\\
&-2\Big\langle  w \chi_{R},\ V_{a,B}^{-2}\Big(\partial_{\theta}v^{-2} (v+V_{a,B})\eta\Big)\partial_{\theta}\eta \chi_{R}\Big\rangle_{\mathcal{G}}.
\end{align*} This, together with $\partial_{\theta}v=\partial_{\theta}\eta$ and $|\partial_{\theta}v|\leq \delta\ll 1$ in \eqref{eq:condition1}, implies that
\begin{align}
|D_{52}|\lesssim \Big\langle |\partial_{\theta}  \chi_{R}w|,\ |\eta|\ |\partial_{\theta}\eta| \chi_{R}\Big\rangle_{\mathcal{G}}+\Big\langle |  \chi_{R}w|,\  |\partial_{\theta}\eta|^2 \chi_{R}\Big\rangle_{\mathcal{G}}.
\end{align}
What is left is similar to obtaining \eqref{eq:estF22ex}, and thus is skipped. This and \eqref{eq:estD51} imply
\begin{align}
\begin{split}\label{eq:estD5}
|D_5|\leq& C\tilde\delta \|\partial_{\theta} \chi_{R}w \|_{\mathcal{G}}^2+\frac{1}{20}\|   w\chi_{R}\|_{\mathcal{G}}^2+ C\Big[|B|+\sum_{k=1}^{3}|\Omega_k|+\sum_{l=1,2}|\alpha_l|\Big]^4+\tilde\delta e^{-\frac{1}{5}R^2}.
\end{split}
\end{align}

Until now we estimated all the terms $D_k,$ $k=1,\cdots,5.$ What is left is to obtain the desired \eqref{eq:wL2Est}
by using the negative parts of the estimate for $D_1$ in \eqref{eq:dominant} to remove various positive terms in the estimates for $\sum_{k=2}^5|D_{k}|$.


\section{Proof of \eqref{eq:YwL2Est}}\label{sec:YwL2Est}

We start with estimating $\|\partial_{y_l}\chi_{R} w\|^2_{\mathcal{G}},\ l=1,2,3$. Recall the definitions of the inner product $\langle \cdot, \ \cdot\rangle_{\mathcal{G}}$ and the norm $\|\cdot\|_{\mathcal{G}}$ from \eqref{def:Gin}.
Since all the techniques have been used in the previous sections, the proof will be sketchy.

Take a $y_l-$derivative on the $\chi_{R}w$-equation in \eqref{eq:tildew3} to obtain
\begin{align}
\begin{split}\label{eq:y2w}
\partial_{\tau}\partial_{y_l} \chi_{R}w =-(L+\frac{1}{2})\partial_{y_l} \chi_{R} w+\partial_{y_l} \chi_{R}\big(F+G+N_1+N_2\big)+\Lambda_l (\chi_{R}w)+\partial_{y_l} \mu(w),
\end{split}
\end{align}
where $\Lambda_l (\chi_{R}w)$ is produced when we change the orders of the operators $\partial_{y_l}$ and $L$,
\begin{align}
\Lambda_l (\chi_{R}w):=L\partial_{y_l} \chi_{R}w-\partial_{y_l} L \chi_{R}w+\frac{1}{2}\partial_{y_l} \chi_{R}w=-(\partial_{y_l} V_{a,B}^{-2}) \partial_{\theta}^2 \chi_{R}w- (\partial_{y_l} V_{a,B}^{-2})  \chi_{R}w,\label{eq:DefLamb}
\end{align} the $\frac{1}{2}$ in the linear operator is produced by commutation relation: for any function $g$, 
\begin{align}
\frac{1}{2}\nabla_{y} (y\cdot \nabla_y g)-\frac{1}{2}y\cdot \nabla_y\nabla_{y}g=\frac{1}{2}\nabla_{y} g.\label{eq:commt}
\end{align}

Take a $\mathcal{G}-$inner product with $\partial_{y_l} \chi_{R}w $ to obtain
\begin{align}
\partial_{\tau}\Big\langle  \partial_{y_l}\chi_{R}w ,\   \partial_{y_l}\chi_{R}w\Big\rangle_{\mathcal{G}}
=&-2\Big\langle e^{-\frac{1}{8} |y|^2} \partial_{y_l}\chi_{R}w, \ (\tilde{L}+\frac{1}{2}) \ e^{-\frac{1}{8} |y|^2} \partial_{y_l}\chi_{R}w\Big\rangle\nonumber\\
&+2\Big\langle  \partial_{y_l}\chi_{R}w, \  \partial_{y_l} \chi_{R}(F+G)\Big\rangle_{\mathcal{G}}\nonumber\\
&+2\Big\langle  \partial_{y_l}\chi_{R}w, \ \partial_{y_l} \chi_{R}N_1\Big\rangle_{\mathcal{G}}+2\Big\langle  \partial_{y_l}\chi_{R}w, \  \partial_{y_l} \chi_{R}N_2\Big\rangle_{\mathcal{G}}\nonumber\\
&+2\Big\langle  \partial_{y_l}\chi_{R}w,\  \Lambda_l(\chi_{R}w)\Big\rangle_{\mathcal{G}}+2\Big\langle  \partial_{y_l}\chi_{R}w, \  \partial_{y_l}\mu(w)\Big\rangle_{\mathcal{G}}\nonumber\\
=&\sum_{k=1}^{6}\Psi_{l,k},\label{eq:Psilk}
\end{align} where the linear operator $\tilde{L}$ was defined in \eqref{eq:defTL}, and the terms $\Psi_{l,k}$ are naturally defined.

For $\Psi_{l,1}$, the orthogonality conditions in \eqref{eq:orthow} make $e^{-\frac{1}{8} |y|^2} \partial_{y_l}\chi_{R}w$ orthogonal to all the eigenvectors with eigenvalues $0$ and $\frac{1}{2}$ of $-\Delta_y+\frac{1}{16}|y|^2-\frac{3}{4}-\frac{1}{2}\partial_{\theta}^2.$
Similar to proving \eqref{eq:dominant},
\begin{align}
\begin{split}
\Psi_{l,1}\leq 
&-\frac{1}{10} \Big[\|\nabla_y e^{-\frac{1}{8} |y|^2} \partial_{y_l}\chi_{R}w\|_2^2+\frac{1}{8}\|y  \partial_{y_l} \chi_{R}w\|_{\mathcal{G}}^2+\frac{1}{2}\| \partial_{y_l}\partial_{\theta}\chi_{R}w\|_{\mathcal{G}}^2\Big]-\frac{1}{2}\| \partial_{y_l}\chi_{R}w\|_{\mathcal{G}}^2.
\end{split}
\end{align}

For $\Psi_{l,2}$, compute directly and use \eqref{eq:Beqn}-\eqref{eq:alpha1Eqn} to obtain, for some $C>0,$
\begin{align}
|\Psi_{l,2}|\leq \frac{1}{20}\| \partial_{y_l}\chi_{R}w\|_{\mathcal{G}}^2+C \Big[|B|^3+|a_{\tau}||B|^2+|B||\Omega_1|\Big]^2+C\tilde\delta e^{-\frac{1}{5}R^2}.
\end{align}

For $\Psi_{l,3}$, integrate by parts in $y_l$ to avoid estimating higher-derivatives of $v$, and then use
\begin{align}
|N_1|\lesssim |\nabla_y v|^2+|\partial_{\theta}v|^2
\end{align} to find
\begin{align}
\begin{split}
|\Psi_{l,3}|\lesssim & \Big\langle |\partial_{y_l} e^{-\frac{1}{8} |y|^2} \partial_{y_l}\chi_{R}w|,\ \chi_{R}e^{-\frac{1}{8} |y|^2}\Big[
|\nabla_y v|^2+|\partial_{\theta}v|^2
\Big]\Big\rangle\\
&+\Big\langle |y_l|| \partial_{y_l}\chi_{R}w|,\ \chi_{R}\Big[
|\nabla_y v|^2+|\partial_{\theta}v|^2
\Big]\Big\rangle_{\mathcal{G}}.
\end{split}
\end{align}
Similar to obtaining \eqref{eq:estF22ex}, we find, for some $C>0,$
\begin{align}
\begin{split}\label{eq:estPsi5}
|\Psi_{l,3}|\leq &\tilde\delta  \Big[\|\nabla_y e^{-\frac{1}{8} |y|^2} \partial_{y_l}\chi_{R}w\|_2^2+\| y_l \nabla_{y}\chi_{R}w\|_{\mathcal{G}}^{2}+\|\partial_{\theta}\chi_{R}w\|_{\mathcal{G}}^2+\| \nabla_{y}\chi_{R}w\|_{\mathcal{G}}^2\Big]\\
&+ C\Big[|B|+\sum_{k=1}^{3}|\Omega_k|+\sum_{l=1,2} |\alpha_l|\Big]^4+\tilde\delta e^{-\frac{1}{5}R^2}.
\end{split}
\end{align}

To estimate $\Psi_{l,4}$, we integrate by parts in $y_l$, and then use the estimate 
\begin{align}
|N_2|\lesssim |\eta| |\partial_{\theta}^2\eta|+\eta^2
\end{align} to find, for some $C>0,$
\begin{align}
\begin{split}\label{eq:estPsi6}
|\Psi_{l,4}|\lesssim &\Big\langle \Big|\partial_{y_l} e^{-\frac{1}{8} |y|^2} \partial_{y_l}\chi_{R}w\Big|,\ \chi_{R}e^{-\frac{1}{8} |y|^2}\Big[
\Big|\eta\ \partial_{\theta}^2\eta\Big|+\eta^2
\Big]\Big\rangle\\
&+\Big\langle \Big|y_l  \partial_{y_l}\chi_{R}w\Big|,\ \chi_{R}\Big[
\Big|\eta\ \partial_{\theta}^2\eta\Big|+\eta^2
\Big]\Big\rangle_{\mathcal{G}}\\
\leq &\tilde\delta  \Big[\Big\|\nabla_y e^{-\frac{1}{8} |y|^2} \partial_{y_l}\chi_{R}w\Big\|_2^2+\Big\| y_l \nabla_{y}\chi_{R}w\Big\|_{\mathcal{G}}^{2}+\Big\| \partial_{\theta}^2\chi_{R}w\Big\|_{\mathcal{G}}^2+\Big\| \chi_{R}w\Big\|_{\mathcal{G}}^2\Big]\\
&\hskip 0.5cm+ C\Big[|B|+\sum_{k=1}^{3}|\Omega_k|+\sum_{l=1,2} |\alpha_l|\Big]^4+\tilde\delta e^{-\frac{1}{5}R^2}.
\end{split}
\end{align}

For $\Psi_{l,5}$, to avoid involving $\|\partial_{\theta}^2 \chi_{R}w\|_{\mathcal{G}}$, we integrate by parts in $\theta$ in the first term to obtain
\begin{align*}
\Psi_{l,5}=&2\Big\langle  \partial_{y_l}\partial_{\theta}\chi_{R}w,\  (\partial_{y_l} V_{a,B}^{-2}) \partial_{\theta} \chi_{R}w\Big\rangle_{\mathcal{G}}-2\Big\langle \partial_{y_l}\chi_{R}w,\  (\partial_{y_l} V_{a,B}^{-2})  \chi_{R}w\Big\rangle_{\mathcal{G}}.
\end{align*}
This, together with $
|\nabla_{y}V_{a,B}^{-2}|\lesssim \tau^{-\frac{1}{2}}\leq \tilde\delta
$ (see \eqref{eq:nablayV}),
implies that, for some $C>0,$
\begin{align}
\begin{split}
|\Psi_{l,5}|
&\leq C\tilde\delta \Big[ \| \partial_{y_l}\partial_{\theta}\chi_{R}w\|_{\mathcal{G}}^2+ \sum_{|k|+l=1}\|   \partial_{\theta}^{l}\nabla_y^{k} \chi_{R}w\|_{\mathcal{G}}^2\Big].
\end{split}
\end{align}

It is easy to control $\Psi_{l,6}$, since all the terms are supported by the set $|y|\in [R,\ (1+\epsilon)R]$. Reason as in \eqref{eq:tildechi3} to obtain
\begin{align}
|\Psi_{l,6}|\lesssim \tilde\delta e^{-\frac{1}{5}R^2}.
\end{align}
Now we have completed estimating all the terms $\Psi_{l,k}$ on the right hand side of \eqref{eq:Psilk}. These estimates and the identity $\partial_{\tau}\|\nabla_y \chi_{R}w\|^2_{\mathcal{G}}=\sum_{l,k}\Psi_{l,k}$ imply the desired result \eqref{eq:YwL2Est}.


\section{Proof of \eqref{eq:TwL2Est}}\label{sec:TwL2Est}

We claim that the following two estimates hold: for some $C>0,$
\begin{align}
\begin{split}\label{eq:half1}
\Big[\frac{d}{d\tau}+\frac{3}{10}\Big]\|  &\partial_{\theta}^2 \chi_{R}w\|_{\mathcal{G}}^2
\leq -\frac{1}{20} \Big[\Big\|\nabla_{y} e^{-\frac{1}{8} |y|^2} \partial_{\theta}^2\chi_{R}w\Big\|_2^2+\frac{1}{16}\Big\|y   \partial_{\theta}^2\chi_{R}w\Big\|_{\mathcal{G}}^2+\frac{1}{2}\Big\|\partial_{\theta}^3\chi_{R}w\Big\|_{\mathcal{G}}^2\Big]\\
&+C\tilde\delta \Big( \sum_{k=1}^{3}\Big[\Big\| y  \partial_{\theta} \partial_{y_{k}}\chi_{R}w\Big\|_{\mathcal{G}}^2+\Big\| \nabla_y e^{-\frac{1}{8} |y|^2} \partial_{\theta} \partial_{y_{k}}\chi_{R}w\Big\|_2^2\Big]+\Big\|  \partial_{\theta}\nabla_y \chi_{R}w\Big\|_{\mathcal{G}}^{2}\Big)\\
&+C\Big[\sum_{k=2}^{3}|\Omega_k|+\sum_{l=1,2}|\alpha_l|\Big]^2 \Big[|B|+\sum_{k=1}^{3}|\Omega_k|+\sum_{l=1,2}|\alpha_l|\Big]^2+\tilde\delta e^{-\frac{1}{5}R^2},
\end{split}
\end{align}
and
\begin{align}
\begin{split}\label{eq:half2}
\Big[\frac{d}{d\tau}+\frac{3}{10}\Big]&\|  \partial_{\theta}\nabla_y \chi_{R}w\|_{\mathcal{G}}^{2}
\leq -\frac{1}{20}\sum_{k=1}^{3}\Big[\frac{1}{16}\Big\| y  \partial_{\theta} \partial_{y_{k}}\chi_{R}w\Big\|_{\mathcal{G}}^2+\Big\| \nabla_y e^{-\frac{1}{8} |y|^2} \partial_{\theta} \partial_{y_{k}}\chi_{R}w\Big\|_2^2\Big]\\
&+\tilde{\delta} \Big\|  \partial_{\theta}^3 \chi_{R}w\Big\|_{\mathcal{G}}^2 +C\Big[\sum_{k=2}^{3}|\Omega_k|+\sum_{l=1,2}|\alpha_l|\Big]^2 \Big[|B|+\sum_{k=1}^{3}|\Omega_k|+\sum_{l=1,2}|\alpha_l|\Big]^2\\
&+\tilde\delta e^{-\frac{1}{5}R^2}.
\end{split}
\end{align}

Assuming the claims hold, we obtain the desired \eqref{eq:TwL2Est} by combining these two estimates and observing some obvious cancellations.

In what follows we prove the claims \eqref{eq:half1} and \eqref{eq:half2}.

\subsection{Proof of \eqref{eq:half1}}
We derive a governing equation for $\chi_{R}\partial_{\theta}^2 w$ by taking $\partial_{\theta}^2$ on \eqref{eq:tildew3} and using $\partial_{\theta}^2 F=0$ (since $F$ is $\theta-$independent) to find
\begin{align}
\partial_{\tau}\chi_{R}\partial_{\theta}^2 w=-L\partial_{\theta}^2\chi_{R}w+\partial_{\theta}^2\chi_{R}\Big[G+N_1+N_2\Big]+\partial_{\theta}^2\mu(w).
\end{align} 
Take a $\mathcal{G}-$inner product with $\chi_{R}\partial_{\theta}^2 w$ to find
\begin{align}
&\partial_{\tau}\Big\langle\chi_{R}\partial_{\theta}^2 w,\ \chi_{R}\partial_{\theta}^2 w\Big\rangle_{\mathcal{G}}\nonumber\\
=&-2\Big\langle  \chi_{R}e^{-\frac{1}{8}|y|^2}\partial_{\theta}^2 w,\ \tilde{L}\ \chi_{R}e^{-\frac{1}{8}|y|^2}\partial_{\theta}^2 w\Big\rangle+2\Big\langle  \chi_{R}\partial_{\theta}^2 w,\  \partial_{\theta}^2 \chi_{R} G\Big\rangle_{\mathcal{G}}\nonumber\\
&+2\Big\langle  \chi_{R}\partial_{\theta}^2 w,\ \partial_{\theta}^2\chi_{R}N_1\Big\rangle_{\mathcal{G}}+2\Big\langle  \chi_{R}\partial_{\theta}^2 w,\ \partial_{\theta}^2\chi_{R}N_2\Big\rangle_{\mathcal{G}}+2\Big\langle  \chi_{R}\partial_{\theta}^2 w,\  \partial_{\theta}^2 \mu(w)\Big\rangle_{\mathcal{G}}\nonumber\\
=&\sum_{k=1}^{5}\Psi_{k},\label{eq:theta2w2}
\end{align} where the linear operator $\tilde{L}$ was defined in \eqref{eq:defTL}, and the terms $\Psi_{k}$ are naturally defined. 

To control $\Psi_1$, we observe that $\partial_{\theta}^2\chi_{R} w$ is $\mathcal{G}$-orthogonal to the same 18 functions listed in \eqref{eq:orthow}, then by the techniques of proving \eqref{eq:dominant}
\begin{align}
\begin{split}\label{eq:phi1}
\Psi_1\leq &-\frac{1}{10} \Big[\|\nabla_{y} e^{-\frac{1}{8} |y|^2} \partial_{\theta}^2\chi_{R}w\|_2^2+\frac{1}{16}\|y   \partial_{\theta}^2\chi_{R}w\|_{\mathcal{G}}^2+\frac{1}{2}\| \partial_{\theta}^3\chi_{R}w\|_{\mathcal{G}}^2\Big]-\frac{1}{2} \|\chi_{R}\partial_{\theta}^2 w\|_{\mathcal{G}}^{2}.
\end{split}
\end{align}

For $\Psi_{2}$ we simplify the expression by observing some cancellation and then control it
\begin{align}
\begin{split}\label{eq:addOne}
|\Psi_2|=&2\Big|\Big\langle  \chi_{R}\partial_{\theta}^2 w,\  \partial_{\theta}^2 (\chi_{R}-1) G\Big\rangle_{\mathcal{G}}\Big|
\leq \tilde\delta e^{-\frac{1}{5}R^2},
\end{split}
\end{align} where we use the identity $$\langle  \chi_{R}\partial_{\theta}^2 w,\ \partial_{\theta}^2  G\rangle_{\mathcal{G}}=0,$$which holds since the $\theta-$derivative removes the $\theta-$independent part of $G$, and then the orthogonality conditions satisfied by $\partial_{\theta}^2\chi_{R} w$ remove the remaining part; and in the second step we use that $1-\chi_{R}$ is supported by the set $\Big\{y\ \Big|\ |y|\geq R\Big\}$, and \eqref{eq:paramePrelim}.

To prepare for estimating $\Psi_3$, we integrate by parts in $\theta$, in order to avoid estimating $\partial_{\theta}^2\nabla_{y}^{k}v,$ $|k|=2$,
\begin{align*}
\Psi_{3}=-2\Big\langle  \chi_{R}\partial_{\theta}^3 w,\  \partial_{\theta}\chi_{R}N_1\Big\rangle_{\mathcal{G}}.
\end{align*}
For the $\partial_{\theta}N_1-$term, compute directly to obtain
\begin{align}\label{eq:thetaN1}
|\partial_{\theta}N_1|\lesssim  \Big[|\nabla_y v|+|\partial_{\theta}v|\Big]\Big[|\nabla_{y}\partial_{\theta}v|+|\partial_{\theta}^2v|\Big]+|\nabla_y v| \sum_{|k|=2}|\nabla_{y}^{k}\partial_{\theta}v|.
\end{align}
We put this back into $\Psi_3$, and use the previously used techniques, for example those in obtaining \eqref{eq:estF22ex} to obtain, for some $C>0,$
\begin{align}
\begin{split}\label{eq:threeSte}
|\Psi_3|\leq &C\tilde\delta\Big\{ \sum_{k=1}^{3}\Big[\Big\| y \partial_{\theta} \partial_{y_{k}}\chi_{R}w\Big\|_{\mathcal{G}}^2+\Big\| \nabla_y e^{-\frac{1}{8} |y|^2} \partial_{\theta} \partial_{y_{k}}\chi_{R}w\Big\|_2^2\Big]+\Big\|  \partial_{\theta}\nabla_y \chi_{R}w\Big\|_{\mathcal{G}}^{2}\Big\}\\
&+C\tilde\delta \Big\{\Big\|\partial_{\theta}^3  \chi_{R}w\Big\|_{\mathcal{G}}^2+\Big\|y \partial_{\theta}^2  \chi_{R}w\Big\|_{\mathcal{G}}^2+\Big\|\nabla_{y}e^{-\frac{1}{8}|y|^2}\partial_{\theta}^3  \chi_{R}w\Big\|_2^2\Big\}\\
&+C\Big\{\sum_{k=2}^{3}|\Omega_k|+\sum_{l=1,2}|\alpha_l|\Big\}^2 \Big\{|B|+\sum_{k=1}^{3}|\Omega_k|+\sum_{l=1,2}|\alpha_l|\Big\}^2+\tilde\delta e^{-\frac{1}{5}R^2}.
\end{split}
\end{align}

Similarly for $\Psi_{4}$, we transform the expression by integrating by parts in $\theta$, 
\begin{align*}
\Psi_{4}=-2\Big\langle  \chi_{R}\partial_{\theta}^3 w,\ \partial_{\theta}\chi_{R}N_2\Big\rangle_{\mathcal{G}}.
\end{align*}
then by $|\eta|\leq \delta$ for $|y|\leq (1+\epsilon)R$ (see \eqref{eq:condition1}) and $\partial_{\theta}v=\partial_{\theta}\eta$,
\begin{align}\label{eq:thetaN2}
|\partial_{\theta}N_2|\lesssim |\partial_{\theta}^3\eta||\eta|+|\partial_{\theta}^2\eta||\partial_{\theta}\eta|+|\partial_{\theta}\eta| |\eta|.
\end{align}
Put this back into the definition of $\Psi_4$, for some $C>0,$
\begin{align}
\begin{split}
|\Psi_4|\leq &\frac{1}{40} \sum_{k=1}^{3}\Big\|\chi_{R}\partial_{\theta}^k w\Big\|_{\mathcal{G}}^2\\
&+C\Big[|B|^2+\sum_{k=1}^{3}|\Omega_k|^2+\sum_{l=1,2}|\alpha_l|^2\Big]\Big[\sum_{k=2}^{3}|\Omega_k|^2+\sum_{l=1,2}|\alpha_l|^2\Big]+ \tilde\delta e^{-\frac{1}{5}R^2}.
\end{split}
\end{align}

$\Psi_{5}$ is defined in terms of $\mu(w)$, which depends on the derivatives of $\chi_R$ and thus is supported by the set $|y|\geq R,$ 
\begin{align}
|\Psi_5|\lesssim \tilde\delta e^{-\frac{1}{5}R^2}.
\end{align}

Now we have completed estimating all the terms on the right hand side of \eqref{eq:theta2w2}. Collecting the estimates above, we complete the proof of \eqref{eq:half1}.

\subsection{Proof of \eqref{eq:half2}}
We start with estimating $\|\partial_{\theta}\partial_{y_l} \chi_{R}w\|_{\mathcal{G}}^2, \ l=1,2,3$. 

Derive a governing equation for $\partial_{\theta}\partial_{y_l}\chi_{R} w$ by taking a $\theta-$derivative on both sides of \eqref{eq:y2w},
\begin{align*}
\partial_{\tau}\partial_{\theta}\partial_{y_l} \chi_{R} w=&-(L+\frac{1}{2})\partial_{\theta}\partial_{y_l}\chi_{R}w+\partial_{\theta}\partial_{y_l} \Big[\chi_{R}\big(G+N_1+N_2\big)+\mu(w)\Big]+\partial_{\theta}\Lambda_l (\chi_{R}w),
\end{align*} where we used that $\partial_{\theta}F=0$, since $F$ is independent of $\theta$. Take a $\mathcal{G}-$inner product with $\partial_{\theta}\partial_{y_l} \chi_{R}w$ to derive,
\begin{align}
&\partial_{\tau}\Big\langle  \partial_{\theta}\partial_{y_l} \chi_{R}w,\  \partial_{\theta}\partial_{y_l} \chi_{R}w\Big\rangle_{\mathcal{G}}\nonumber\\
=&-2\Big\langle e^{-\frac{1}{8}|y|^2} \partial_{\theta}\partial_{y_l} \chi_{R}w,\ (\tilde{L}+\frac{1}{2}) e^{-\frac{1}{8}|y|^2} \partial_{\theta}\partial_{y_l} \chi_{R}w\Big\rangle+2\Big\langle \partial_{\theta}\partial_{y_l} \chi_{R}w,\  \partial_{\theta}\partial_{y_l} \chi_{R}G\Big\rangle_{\mathcal{G}}\nonumber\\
&+2\Big\langle \partial_{\theta}\partial_{y_l} \chi_{R}w,\  \partial_{\theta}\partial_{y_l} \chi_{R}(N_1+N_2)\Big\rangle_{\mathcal{G}}+2\Big\langle\partial_{\theta}\partial_{y_l} \chi_{R}w,\ \partial_{\theta}\partial_{y_l} \mu(w)\Big\rangle_{\mathcal{G}}\nonumber\\
&+2\Big\langle  \partial_{\theta}\partial_{y_l} \chi_{R}w,\ \partial_{\theta}\Lambda_l(\chi_{R}w)\Big\rangle_{\mathcal{G}}\nonumber\\
=&\sum_{k=1}^{5}\Pi_{l,k},\label{eq:thetay2w}
\end{align} here the terms $\Pi_{l, k}$ are naturally defined, the linear operator $\tilde{L}$ was defined in \eqref{eq:defTL}.

For $\Pi_{l,1}$, \eqref{eq:orthow} implies that $e^{-\frac{1}{8} |y|^2}\partial_{\theta}\partial_{y_l} \chi_{R} w$ 
is orthogonal to all the eigenvectors with eigenvalues $0,\frac{1}{2}$ of the linear operator $-\Delta_y+\frac{1}{16}|y|^2-\frac{3}{4}-\frac{1}{2}\partial_{\theta}^2.$ Similar to proving \eqref{eq:dominant}, 
\begin{align}
\begin{split}
\Pi_{l,1}
\leq &-\frac{1}{10} \Big[\Big\|\nabla_{y} e^{-\frac{1}{8} |y|^2} \partial_{\theta}\partial_{y_l}\chi_{R}w\Big\|_2^2+\frac{1}{16}\Big\|y   \partial_{\theta}\partial_{y_l}\chi_{R}w\Big\|_{\mathcal{G}}^2+\frac{1}{2}\Big\|
 \partial_{\theta}^2\partial_{y_l}\chi_{R}w\Big\|_{\mathcal{G}}^2\Big]\\
 &-\frac{1}{2} \|\partial_{\theta}\partial_{y_l}\chi_{R} w\|_{\mathcal{G}}^{2}.
 \end{split}
\end{align}

For $\Pi_{l,2}$, we argue as in \eqref{eq:addOne} to obtain
\begin{align}
|\Pi_{l,2}|=2\Big|\Big\langle \partial_{\theta}\partial_{y_l} \chi_{R}w,\  \partial_{\theta}\partial_{y_l}(\chi_{R}-1) G\Big\rangle_{\mathcal{G}}\Big|\leq \tilde\delta e^{-\frac{1}{5}R^2}.
\end{align}

For $\Pi_{l,3}$, we integrate by parts in $y_l$ to avoid estimating some high derivatives of $v$ to obtain
\begin{align}
\Pi_{l,3}=-2\Big\langle \partial_{y_l}e^{-\frac{1}{4}|y|^2} \partial_{\theta}\partial_{y_l} \chi_{R}w,\  \partial_{\theta} \chi_{R}(N_1+N_2)\Big\rangle,
\end{align} and then take absolute value,
\begin{align*}
|\Pi_{l,3}|\lesssim &\Big\langle |y_l| | \partial_{\theta}\partial_{y_l} \chi_{R}w|,  \chi_{R}|\partial_{\theta} (N_1+N_2)|\Big\rangle_{\mathcal{G}}\\
&+\Big\langle |\partial_{y_l} e^{-\frac{1}{8}|y|^2}\partial_{\theta}\partial_{y_l} \chi_{R}w|, \ e^{-\frac{1}{8}|y|^2}\chi_{R}\Big|\partial_{\theta} (N_1+N_2)\Big|\Big\rangle.
\end{align*}
$\partial_{\theta}N_k$, $k=1,2,$ were estimated in \eqref{eq:thetaN1} and \eqref{eq:thetaN2}. These and \eqref{eq:poincare} imply that, for some $C>0$,
\begin{align}
\begin{split}
|\Pi_{l,3}|\leq & \tilde\delta \Big[\|y_l \partial_{\theta}\partial_{y_l} \chi_{R}w\|_{\mathcal{G}}^2+\|\partial_{y_l}e^{-\frac{1}{8}|y|^2} \partial_{\theta}\partial_{y_l} \chi_{R}w\|_2^2+\|\chi_{R}\partial_{\theta}^3 w\|_{\mathcal{G}}^2+\| \partial_{\theta}\partial_{y_l} \chi_{R}w\|_{\mathcal{G}}^2\Big]\\
&+C\Big[|B|^2+\sum_{k=1}^{3}|\Omega_k|^2+\sum_{l=1,2}|\alpha_l|^2\Big]\Big[\sum_{k=2}^{3}|\Omega_k|^2+\sum_{l=1,2}|\alpha_l|^2\Big]+ \tilde\delta e^{-\frac{1}{5}R^2}.
\end{split}
\end{align}

It is easy to estimate $\Pi_{l,4}$ since all the terms in $\mu(w)$ are supported by the set $\Big\{ y\ \Big|\ |y|\geq R\Big\},$ 
\begin{align}
 |\Pi_{l,4}|\lesssim \tilde\delta e^{-\frac{1}{5}R^2}.
\end{align}

For $\Pi_{l,5},$ we use the definition of $\Lambda$ in \eqref{eq:DefLamb}, \eqref{eq:poincare} and that $|\nabla_{y}V_{a,B}|\lesssim \tau^{-\frac{3}{5}}|y|\leq \tilde\delta$ in \eqref{eq:nablayV} to obtain, for some $C>0,$
\begin{align}
|\Pi_{l,5}|\leq C \tilde\delta \Big[\|  \partial_{\theta}\partial_{y_l} \chi_{R}w\|_{\mathcal{G}}^2+\|  \partial_{\theta}^2 \chi_{R}w\|_{\mathcal{G}}^2 \Big].
\end{align}

Thus we have completed estimating all the terms on the right hand side of \eqref{eq:thetay2w}.
These estimates and the identity $\| \partial_{\theta}\nabla_y \chi_{R}w\|_{\mathcal{G}}^2=\sum_{l=1}^{3}\|  \partial_{\theta}\partial_{y_l} \chi_{R}w\|_{\mathcal{G}}^2$ imply the desired \eqref{eq:half2}.


\section{Proof of \eqref{eq:weightLInf}}\label{sec:ReforWeightLInf}

We will prove the desired results by apply Gronwall's inequality, see \eqref{eq:vecM}-\eqref{eq:gron} below.
To facilitate the discussions we define controlling functions $\mathcal{M}_{k},\ k=1,2,3,4,$ as
\begin{align}
\begin{split}\label{eq:defM}
\mathcal{M}_{1}(\tau):=&\max_{\tau_0\leq s\leq\tau} R^{4}(s)\Big\|\langle y\rangle^{-3} \chi_{R(s)}w(\cdot,s)\Big\|_{\infty},\\
\mathcal{M}_{2}(\tau):=&\max_{\tau_0\leq s\leq\tau} R^{3}(s)\Big\|\langle y\rangle^{-2} \nabla_y \chi_{R(s)}w(\cdot,s)\Big\|_{\infty},\\
\mathcal{M}_{3}(\tau):=&\max_{\tau_0\leq s\leq\tau} R^{3}(s)\Big\|\langle y\rangle^{-2} \partial_{\theta} \chi_{R(s)}w(\cdot,s)\Big\|_{\infty},\\
\mathcal{M}_{4}(\tau):=&\max_{\tau_0\leq s\leq\tau} R^{2}(s)\sum_{|k|+l=2}\Big\|\langle y\rangle^{-1} \nabla_{y}^{k}\partial_{\theta}^{l} \chi_{R(s)}w(\cdot,s)\Big\|_{\infty}.
\end{split}
\end{align}

These functions satisfy the following estimates: recall the definitions of the constants $\tilde\delta$ and $\kappa(\epsilon)$ from \eqref{eq:defTDelta} and \eqref{eq:defKappa},
\begin{proposition}\label{prop:weight} There exists a constant $C>0$, such that if the condition \eqref{eq:condition1} holds for $\tau\in [\tau_0,\tau_1]$, then in the same time interval,
\begin{align}
\mathcal{M}_1(\tau)\leq &C\Big[\tilde\delta \kappa(\epsilon)+1+\big(\tilde\delta+R^{-\frac{1}{4}}(\tau_0)\big)\sum_{k=1}^{3}\mathcal{M}_{k}+\tilde\delta R^{-1}(\tau_0) \sum_{k=1}^{3}\mathcal{M}_{k}^2 \Big],\label{eq:estM1}\\
\mathcal{M}_2(\tau)\leq &C\Big[\tilde\delta \kappa(\epsilon)+1+\big(\tilde\delta+R^{-\frac{1}{4}}(\tau_0)\big) \sum_{k=1}^{3}\mathcal{M}_{k}(\tau)  \Big],\label{eq:estM2}\\
\mathcal{M}_3(\tau)\leq &C\Big[\tilde\delta \kappa(\epsilon)+1+\big(\tilde\delta+R^{-\frac{1}{4}}(\tau_0)\big)\sum_{k=1}^{3}\mathcal{M}_{k}(\tau)  \Big],\label{eq:estM3}\\
\mathcal{M}_{4}(\tau)\leq &C\Big[\tilde\delta \kappa(\epsilon)+1+\mathcal{M}_3(\tau)+\big(\tilde\delta+R^{-\frac{1}{4}}(\tau_0)\big)\sum_{k=1}^{4}\mathcal{M}_{k}(\tau)\Big].\label{eq:estM4}
\end{align}
\end{proposition}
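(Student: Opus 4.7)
The plan is to derive a Duhamel representation for $\chi_R w$ starting from the evolution equation \eqref{eq:tildew3} and then to apply weighted $L^\infty$ propagator estimates for the linear operator $L$, projected onto the orthogonal complement of the 18 non-decaying eigendirections listed in \eqref{eq:orthow}. Writing
\begin{align*}
(\chi_R w)(\tau) = e^{-(\tau-\tau_0)L}(\chi_R w)(\tau_0) + \int_{\tau_0}^{\tau} e^{-(\tau-s)L}\Bigl[\chi_R\bigl(F+G+N_1+N_2\bigr) + \mu(w)\Bigr](s)\,ds,
\end{align*}
I would take the norm $\|\langle y\rangle^{-k}\cdot\|_\infty$ (with $k=3,2,2,1$ for $\mathcal{M}_1,\mathcal{M}_2,\mathcal{M}_3,\mathcal{M}_4$ respectively) of both sides and multiply by the appropriate power of $R(\tau)$. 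The required propagator estimate is of the form $\|\langle y\rangle^{-k} e^{-tL}f\|_\infty \lesssim e^{-ct}\|\langle y\rangle^{-k} f\|_\infty$ for $f$ orthogonal to the bad modes, with the usual parabolic smoothing $\|\langle y\rangle^{-k}\nabla_y e^{-tL}f\|_\infty \lesssim t^{-1/2}\|\langle y\rangle^{-k}f\|_\infty$. These come from the explicit Mehler kernel of the conjugated operator $\mathcal{L}_0$, with $V_{a,B}^{-2}-\tfrac12$ treated perturbatively using $|B|,|a-\tfrac12|\lesssim \tau^{-3/5}$.

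The next step is to estimate each source term. The initial contribution $e^{-(\tau-\tau_0)L}(\chi_R w)(\tau_0)$ is bounded by \eqref{eq:IniWeighted} in Lemma \ref{LM:ColdMini}, producing the $+1$ on the right hand sides. The linear source $\chi_R(F+G)$ is polynomial in $y$ with coefficients controlled by $H_1$ via \eqref{eq:Beqn}-\eqref{eq:alpha1Eqn}; after weighting and integrating in $s$ these contribute $\mathcal{O}(\tilde\delta\kappa(\epsilon))$. The cutoff error $\mu(w)$ is supported on $|y|\in[R,(1+\epsilon)R]$ and gives an exponentially small contribution absorbed into $\tilde\delta\kappa(\epsilon)$. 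For the nonlinearities $\chi_R N_1$ and $\chi_R N_2$, I would decompose $v$ as in \eqref{eq:decomVToW} and insert $1=\chi_R+(1-\chi_R)$ before $w$: products such as $(\partial_\theta\chi_R w)(\nabla_y\chi_R w)$ produce bilinear contributions bounded by $R^{-4}\mathcal{M}_j\mathcal{M}_k\langle y\rangle^{\cdot}$, giving the $\tilde\delta R^{-1}(\tau_0)\sum\mathcal{M}_k^2$ term in \eqref{eq:estM1}; cross terms involving one factor of $V_{a,B}$ (with $|\nabla_y V_{a,B}|\lesssim |B|\langle y\rangle\lesssim \tilde\delta$) or one factor of the scalar parameters produce the $(\tilde\delta+R^{-1/4}(\tau_0))\sum\mathcal{M}_k$ terms. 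The factor $R^{-1/4}(\tau_0)$ arises from applying L'H\^opital to $\int_{\tau_0}^\tau e^{-c(\tau-s)}R^{-4}(s)R^{k}(\tau)\,ds$ and from the monotonicity of $R$.

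For the derivative bounds \eqref{eq:estM2}, \eqref{eq:estM3}, \eqref{eq:estM4} I would commute $\nabla_y$ and $\partial_\theta$ through \eqref{eq:tildew3} exactly as in Section \ref{sec:YwL2Est}, noting that the commutator $[\partial_{y_j},L]$ equals $\Lambda$ from \eqref{eq:DefLamb} and contributes only $\tilde\delta$-small coefficients thanks to $|\nabla_y V_{a,B}^{-2}|\lesssim \tilde\delta$. For \eqref{eq:estM4} the nonlinearity differentiated twice unavoidably produces a term linear in $\partial_\theta\nabla_y w$ without a small prefactor (since differentiating $N_1,N_2$ removes the extra $\theta$-derivative used as smallness), and this is the source of the $\mathcal{M}_3$ term appearing with coefficient $1$; all other derivative terms remain small through $\tilde\delta$ or $R^{-1/4}(\tau_0)$.

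The main obstacle I anticipate is pinning down the $R^{-1/4}(\tau_0)$ coefficient sharply and uniformly in $\tau\geq\tau_0$. This requires verifying, via an explicit Mehler-kernel computation, that the convolution of $e^{-(\tau-s)L}$ against a polynomially weighted source transfers the weight $\langle y\rangle^{-k}$ cleanly, without accruing extra $\ln\tau$ factors that would destroy the $R^{-1/4}$ gain. A secondary difficulty is ensuring that, when restricted to the orthogonal complement of the 18 bad modes, the projected propagator retains full parabolic smoothing in the weighted $L^\infty$ setting; this should follow from finite-rank perturbation of the bare semigroup bounds but needs to be checked carefully on each eigendirection.
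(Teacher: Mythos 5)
Your plan treats the cutoff error $\mu(w)$ as an exponentially small source, absorbed into $\tilde\delta\kappa(\epsilon)$, because it is supported on $|y|\in[R,(1+\epsilon)R]$. That step is where the argument breaks. Exponential smallness of $\mu(w)$ is a feature of the Gaussian-weighted $L^2$ norm $\|e^{-\frac{1}{8}|y|^2}\cdot\|_2$, where $e^{-\frac{1}{8}|y|^2}\sim e^{-R^2/8}$ on the support; it does \emph{not} carry over to the polynomially weighted norms $\|\langle y\rangle^{-k}\cdot\|_\infty$, which decay only like $R^{-k}$ there. Worse, the single term $\tfrac{1}{2}(y\cdot\nabla_y\chi_R)w$ inside $\mu(w)$ is an order-one multiplication operator on $\chi_R w$ near $|y|\sim R$ (since $|y\cdot\nabla_y\chi_R|\sim 1$ on the support), and as a linear map $\chi_R w\mapsto \tfrac{y\cdot\nabla_y\chi_R}{\chi_R}\chi_R w$ it is actually \emph{unbounded}, since $\chi_R\to 0$ near the outer edge of the support. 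Treating this as a lower-order source would feed back a contribution of the same order as $\mathcal{M}_1$ with no small prefactor, and the estimate \eqref{eq:estM1} would not close.

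The paper handles this with a technical device your proposal does not contain: it introduces a secondary cutoff $\tilde\chi_R$, splits $\tfrac{1}{2}(y\cdot\nabla_y\chi_R)w = \tfrac{1}{2}\tfrac{(y\cdot\nabla_y\chi_R)\tilde\chi_R}{\chi_R}\chi_R w + \tfrac{1}{2}(y\cdot\nabla_y\chi_R)(1-\tilde\chi_R)w$, and \emph{absorbs the first piece into the linear operator} as a nonnegative potential $V=|\tfrac{\tilde\chi_R\, y\cdot\nabla_y\chi_R}{\chi_R}|$, exploiting the sign coming from $\chi$ being decreasing in $|z|$. The propagator bounds in Lemmas \ref{LM:propagator}--\ref{LM:U5} are then proved for the operator $-\mathcal{L}=-\mathcal{L}_0-V$ (via path integrals, using $V\geq 0$ and the tame growth $|V|\lesssim R^{1/4}$, $|\nabla_y V|\lesssim R^{-1/4}$ from \eqref{eq:NewPoten} and \eqref{eq:deriveSmooth}), not for $-L$. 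Only the second, far-tail piece and the remaining derivatives of $\chi_R$ are treated as small sources. Without this step there is no uniform-in-$\tau$ bounded propagator on the polynomially weighted $L^\infty$ space, and the Duhamel iteration you outline cannot be carried out. (Secondary, smaller mismatches: the paper does not use parabolic $t^{-1/2}$ smoothing but instead derives separate propagated equations for each derivative of $\chi_R w$; and the $\mathcal{M}_3$ term in \eqref{eq:estM4} enters through the projection $\Gamma$ onto Fourier modes $|n|\geq 2$ in $\theta$, not from loss of a $\theta$-derivative in $N_1,N_2$.)
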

\eqref{eq:estM1}, \eqref{eq:estM2} and \eqref{eq:estM3} will be proved in Sections \ref{sec:estM1}, \ref{sec:estM2} and \ref{sec:estM3} respectively.  
In Sections \ref{sec:estM402} and \ref{sec:estM420} we will prove \eqref{eq:estM4} when $(|k|,l)=(2,0),\ (0,2).$ The cases $(|k|,l)=(1,1)$ can be treated similarly, hence its proof is skipped.

Assuming Proposition \ref{prop:weight}, we are ready to prove \ref{eq:weightLInf}.
\begin{proof}
The tool is the standard Gronwall inequality. To prepare for its application we define a new function $\mathcal{M}$ as 
\begin{align}
\mathcal{M}(\tau):=\displaystyle\sum_{k=1}^{3}\mathcal{M}_{k}(\tau).
\end{align} \eqref{eq:estM1}-\eqref{eq:estM3} do not depend on $\mathcal{M}_4$, and can be emerged into one inequality
\begin{align}\label{eq:vecM}
\mathcal{M}(\tau)\leq 3C\Big[\tilde\delta \kappa(\epsilon)+1\Big]+3C\big(\tilde\delta+R^{-\frac{1}{4}}(\tau_0)\big) \mathcal{M}(\tau)+C\tilde\delta R^{-1}(\tau_0) \mathcal{M}^2(\tau).
\end{align}

For $\mathcal{M}(\tau_0),$ the estimates on $v-\sqrt{2}$ and its derivatives in \eqref{eq:IniWeighted}, the decomposition of $v$ in \eqref{eq:decomVToW} and the estimates in \eqref{eq:paramePrelim} imply that if $\tau_0$ is sufficiently large to make $\tau_0^{-\alpha}R^{4}(\tau_0)\ll 1$, then $\sum_{m+|k|+l=3}\Big\|\langle y\rangle^{-m}\nabla_{y}^{k}\partial^{l}_{\theta}\chi_{R}w(\cdot,\tau_0)\Big\|_{\infty}\ll R^{-4}(\tau_0)$, and thus,
\begin{align}\label{eq:IniSm}
\mathcal{M}(\tau_0)\leq  1.
\end{align} 

\eqref{eq:vecM}, \eqref{eq:IniSm} and that $\tilde\delta+R^{-\frac{1}{4}}(\tau_0)\ll 1$ make Gronwall inequality applicable, thus
\begin{align}\label{eq:gron}
\mathcal{M}(\tau)\leq 4C\Big[ \tilde\delta \kappa(\epsilon)+1\Big],\ \text{when}\ \tau\in[ \tau_0,\ \tau_1].
\end{align}
Plug this into \eqref{eq:estM4} to find 
\begin{align}
\mathcal{M}_4(\tau)\leq 10C\Big[ \tilde\delta \kappa(\epsilon)+1\Big].
\end{align}

These, together with the definitions of $\mathcal{M}_{k}$, $k=1,2,3,4,$ imply the desired result.

\end{proof}

To complete the proof we prove \eqref{eq:weightLInf} in the subsequent subsections.

\section{Proof of \eqref{eq:estM1} }\label{sec:estM1}
Compared to the subsequent sections, the presentation here are the most detailed, so that we can skip some of the details later.

We start with casting problem into a convenient form.

Rewrite the equation for $\chi_{R} w$ in \eqref{eq:tildew3} as
\begin{align}
\begin{split}\label{eq:LinfW}
\partial_{\tau} \chi_{R}w=&-L_1(\chi_{R} w)+\mu(w)+(V_{a,B}^{-2}-\frac{1}{2})(\partial_{\theta}^2 \chi_{R}w +\chi_{R}w)+\big(F+G+N_1+N_2\big)\chi_{R}
\end{split}
\end{align} where, recall the definition of $\mu(w)$ in \eqref{eq:Tchi3}, and the operator $L_1$ is defined as
\begin{align}
L_1:=-\Delta_y+\frac{1}{2}y\cdot \nabla_y-\frac{1}{2}\partial_{\theta}^2-1.
\end{align}

Before estimating the terms on the right hand side we present the difficulties and ideas in overcoming them. We will prove, in Proposition \ref{Prop:weight3} below, that the terms $(V_{a,B}^{-2}-\frac{1}{2})(\partial_{\theta}^2 \chi_{R}w +\chi_{R}w)$ and $\big(F+G+N_1+N_2\big)\chi_{R}$ on the right hand side decay sufficiently rapidly, hence are not difficult to control.

The focus is on $\mu(w)$, since it contains a difficult term. To isolate it we observe that
\begin{align}
\mu(w)=\frac{1}{2}(y\cdot\nabla_{y}\chi_{R})w+\Gamma(w)
\end{align} where the term $\Gamma(w)$ is the easy part, and is defined as
\begin{align*}
\Gamma(w):=(\partial_{\tau}\chi_{R})w-(\Delta_{y}\chi_{R})w-2\nabla_{y}\chi_{R}\cdot  \nabla_{y}w.
\end{align*}
$\Gamma(w)$ decays rapidly:
the definition $\chi_{R}(y)=\chi(\frac{y}{R})$ makes $\nabla_y \chi_{R}=R^{-1}(\nabla_x\chi)(\frac{y}{R})$ and $\Delta \chi_{R}=R^{-2} (\Delta_x \chi)(\frac{y}{R})$ small. This, together with $|w|+ |\nabla_{y}w|\lesssim \tilde\delta$ (see \eqref{eq:estW}), implies, 
\begin{align}
\|\langle y\rangle^{-3}\Gamma(w)\|_{\infty}\lesssim \tilde\delta \kappa(\epsilon)R^{-4},\label{eq:gammaw}
\end{align} where, recall the definition of $\kappa(\epsilon)$ from \eqref{eq:defKappa}.

Now we turn to the difficult term $\frac{1}{2}(y\cdot\nabla_{y}\chi_{R})w$. The obstacle is caused by two facts: (1) $\frac{1}{2}y\cdot\nabla_{y}\chi_{R}$ is of order one in $L^{\infty}$ norm, since the definition $\chi_{R}(y)=\chi(\frac{y}{R})$ makes
\begin{align}
\sup_{y}\Big|\frac{1}{2}y\cdot\nabla_{y}\chi_{R}(y)\Big|=\sup_{x}\Big|\frac{1}{2}x\cdot \nabla_{x}\chi(x)\Big| ,
\end{align} consequently it can not be treated as a small term; (2) to make it even worse, the mapping $\chi_{R}w\rightarrow \frac{1}{2}(y\cdot \nabla_{y}\chi_{R})w=\frac{1}{2}\frac{y\cdot \nabla_{y}\chi_{R}}{\chi_{R}}\chi_{R}w$ is unbounded since $|\frac{y\cdot \nabla_{y}\chi_{R}}{\chi_{R}}|\rightarrow \infty$ as $|y|\rightarrow (1+\epsilon)R$.

To overcome the first difficulty, we observe, by requiring $\chi(z)=\chi(|z|)$ to be a decreasing function (see \eqref{eq:defChi3}), $\frac{1}{2}y\cdot\nabla_{y}\chi_{R}$ is non-positive. It is favorable since a non-positive multiplier on the right hand side of \eqref{eq:LinfW} should help $\chi_{R}w$ to decay more rapidly.
For the second difficulty, the strategy is to absorb ``most" of it into the linear operator. For that purpose we define a new non-negative smooth cutoff function $\tilde\chi_{R}(y)$ such that
\begin{align}
\tilde\chi_{R}(y)=\left[
\begin{array}{ll}
1 ,\ \text{if}\ |y|\leq R(1+\epsilon- R^{-\frac{1}{4}}),\\
0,\ \text{if}\ |y|\geq R(1+\epsilon-2 R^{-\frac{1}{4}})
\end{array}
\right.
\end{align} and require it to satisfy the estimate
\begin{align}
|\nabla_{y}^{k}\tilde{\chi}_{R}(y)|\lesssim R^{-\frac{3}{4}|k|},\ |k|=1, \ 2.
\end{align} Such a function is easy to construct, hence we skip the details. 

Then we decompose $\frac{1}{2} (y\cdot\nabla_y  \chi_{R}) w $ into two parts, recall that $\chi(|x|)\rightarrow 0$ rapidly as $|x|\rightarrow 1+\epsilon$, see \eqref{eq:properties},
\begin{align}
\frac{1}{2} (y\cdot\nabla_y \chi_{R}) w=\frac{1}{2} \frac{(y\cdot\nabla_y \chi_{R}) \tilde\chi_{R}}{\chi_{R}}\chi_{R} w+\frac{1}{2} (y\cdot\nabla_y \chi_{R}) (1-\tilde\chi_{R})w.\label{eq:unboundtwo}
\end{align} 
The following three observations will be used often in the rest of the paper:
\begin{itemize}
\item[(A)]
The multiplier $\frac{1}{2} \frac{(y\cdot\nabla_y \chi_{R}) \tilde\chi_{R}}{\chi_{R}}$ in \eqref{eq:unboundtwo} is bounded: \eqref{eq:properties} implies that, for some $c(\epsilon)>0,$
\begin{align}
\Big|\frac{y\cdot\nabla_{y} \chi_{R}\  \tilde\chi_{R}}{\chi_{R}}\Big|\leq c(\epsilon) R^{\frac{1}{4}}.\label{eq:NewPoten}
\end{align}
\item[(B)]
Provided that $R$ is sufficiently large, the second part in \eqref{eq:unboundtwo} decays rapidly, 
\begin{align}
\sum_{|k|=0,1,2}\Big\|\nabla_{y}^{k}\big\{(y\cdot\nabla_{y} \chi_{R}) (1-\tilde\chi_{R})w\big\}\Big\|_{\infty}\leq \tilde\delta R^{-5} ,\label{eq:tails}
\end{align}where we use that $\Big|\frac{d}{d|z|}\chi(|z|)\Big|\rightarrow 0$ rapidly as $|z|\rightarrow 1+\epsilon$, see \eqref{eq:properties},  
\begin{align}
\Big|\nabla_{y}^{k}\Big((y\cdot\nabla_{y} \chi_{R}) (1-\tilde\chi_{R})\Big)\Big|\leq R^{-5},
\end{align}  and $\sum_{|k|=0,1,2}|\nabla_{}^{k}w|\lesssim \tilde\delta$ proved in \eqref{eq:estW}. 
\item[(C)]
If $R$ is sufficiently large, then the properties of $\chi$ in \eqref{eq:properties} implies that
\begin{align}
\Big|\nabla_{y}^{k}\big[\frac{y\cdot\nabla_{y} \chi_{R}\  \tilde\chi_{R}}{\chi_{R}}\big]\Big|\leq R^{-\frac{1}{4}},\ |k|=1,2.\label{eq:deriveSmooth}
\end{align} 
\end{itemize}

Returning to the equation for $\chi_{R}w$ in \eqref{eq:LinfW}, we move the first part in \eqref{eq:unboundtwo} into the linear operator and leave the second to the remainder. The equation becomes
\begin{align}
\partial_{\tau}(\chi_{R}w)=&-L_2(\chi_{R} w)+
\Psi(w)+\chi_{R}(F+G+N_1+N_2),\label{eq:weightLinfw}
\end{align} where the linear operator $L_2$ is defined as
\begin{align*}
L_2:=L_1-\frac{1}{2}\frac{\tilde\chi_{R}\ y\cdot \nabla_{y} \chi_{R}  }{\chi_{R}}=-\Delta_y+\frac{1}{2}y\cdot \nabla_y-\frac{1}{2}\partial_{\theta}^2-1+\frac{1}{2}\Big|\frac{\tilde\chi_{R}\ y\cdot \nabla_{y} \chi_{R}  }{\chi_{R}}\Big|,
\end{align*} and $\Psi$ is a linear operator defined as
\begin{align}
\begin{split}\label{eq:defPsiw}
\Psi(w):=
&(\frac{1}{2}y\cdot \nabla_{y} \chi_{R})\ (1-\tilde\chi_{R}) w+(\partial_{\tau}\chi_{R})w-(\Delta_{y}\chi_{R})w-2\nabla_{y}\chi_{R}\cdot  \nabla_{y}w\\
&+(V_{a,B}^{-2}-\frac{1}{2})\partial_{\theta}^2 w \chi_{R}+(V_{a,B}^{-2}-\frac{1}{2})\chi_{R}w.
\end{split}
\end{align}

Next we prepare for estimating $\chi_{R}w$ by casting \eqref{eq:weightLinfw} into a convenient form.

Observing that the operator $L_2$, mapping $L^2$ space into itself, is not self-adjoint, while its conjugation $e^{-\frac{1}{8}|y|^2}L_2 e^{\frac{1}{8}|y|^2}$ is. We transform the equation accordingly
\begin{align}
\begin{split}\label{eq:selfadw}
\partial_{\tau} (e^{-\frac{1}{8}|y|^2}\chi_{R}w)=&-\mathcal{L}(e^{-\frac{1}{8}|y|^2}\chi_{R}w)+e^{-\frac{1}{8}|y|^2}\Big[\chi_{R}(F+G+N_1+N_2)+
\Psi(w)\Big],
\end{split}
\end{align}
with the linear operator $\mathcal{L}$ defined as
\begin{align}
\mathcal{L}:=e^{-\frac{1}{8}|y|^2} L_2 e^{\frac{1}{8}|y|^2}=-\Delta_{y}+\frac{1}{16}|y|^2-\frac{3}{4}-\frac{1}{2}
\partial_{\theta}^2-1+\frac{1}{2}\Big|\frac{\tilde\chi_{R}\ y\cdot \nabla_{y} \chi_{R}  }{\chi_{R}}\Big|.
\end{align}

By \eqref{eq:orthow}, $e^{-\frac{1}{8}|y|^2}\chi_{R}w$ is orthogonal to 18 functions, which are the eigenvectors of $-\Delta_y+\frac{1}{16}|y|^2-\frac{3}{4}-\frac{1}{2}\partial_{\theta}^2-1$ with eigenvalues $-1,\ -\frac{1}{2}$ and $0$. Denote, by $P_{18}$, the orthogonal projection onto the $L^2$ subspace orthogonal to these 18 functions. This makes
\begin{align}
P_{18}e^{-\frac{1}{8}|y|^2}\chi_{R}w=e^{-\frac{1}{8}|y|^2}\chi_{R}w.\label{eq:defP21}
\end{align}
Apply $P_{18}$, and then Duhamel's principle to \eqref{eq:selfadw} to obtain
\begin{align}
\begin{split}\label{eq:3w}
e^{-\frac{1}{8}|y|^2}\chi_{R}w=&U_1(\tau, \tau_0) e^{-\frac{1}{8}|y|^2}\chi_{R}w(\tau_0)\\
&+\int_{\tau_0}^{\tau}U_1(\tau,s) P_{18}e^{-\frac{1}{8}|y|^2}\Big(\chi_{R}[F+G+N_1+N_2]+\Psi(w)\Big)(s)\ ds,
\end{split}
\end{align} where $U_1(\sigma_1,\sigma_2)$ is the propagator generated by the linear operator $-P_{18}\mathcal{L}P_{18}$ from $\sigma_2$ to $\sigma_1$.

The propagator $U_1(\tau,s)$ generates a decay rate, and it plays a crucial role:
\begin{lemma}\label{LM:propagator}
There exists a constant $C$, such that for any function $g$ and for any $\sigma_1\geq \sigma_2$,
\begin{align}
\Big\|\langle y\rangle^{-3} e^{\frac{1}{8}|y|^2} U_1(\sigma_1,\sigma_2) P_{18}g\Big\|_{\infty}\leq C e^{-\frac{2}{5}(\sigma_1-\sigma_2)} \Big\|\langle y\rangle^{-3}e^{\frac{1}{8}|y|^2} g\Big\|_{\infty}.
\end{align}

\end{lemma}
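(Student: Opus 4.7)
The plan is a three-step semigroup argument: identify the spectral gap of the conjugated unperturbed operator, upgrade the resulting $L^{2}$ decay to the weighted $L^{\infty}$ decay via a Mehler-kernel smoothing estimate, and absorb the cutoff-induced non-negative potential $V:=\bigl|\tilde\chi_{R}\,y\cdot\nabla_{y}\chi_{R}/\chi_{R}\bigr|$ by exploiting its narrow annular support.

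Write $\mathcal{L}=\mathcal{L}_{0}+V$ with $\mathcal{L}_{0}:=-\Delta_{y}+\tfrac{1}{16}|y|^{2}-\tfrac{3}{4}-\tfrac{1}{2}\partial_{\theta}^{2}-1$. First I would diagonalize $\mathcal{L}_{0}$ in the Hermite--Fourier basis $e^{-|y|^{2}/8}H_{n_1}(y_1/\sqrt{2})H_{n_2}(y_2/\sqrt{2})H_{n_3}(y_3/\sqrt{2})\cdot\{1,\cos k\theta,\sin k\theta\}$, obtaining eigenvalues $\tfrac{|n|}{2}+\tfrac{k^{2}}{2}-1$ with $|n|=n_{1}+n_{2}+n_{3}\ge 0$ and $k\ge 0$. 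A direct enumeration shows that the 18 eigenfunctions listed in \eqref{eq:orthow} are precisely those with non-positive eigenvalue, so $P_{18}\mathcal{L}_{0}P_{18}\ge\tfrac{1}{2}$ on its range (the next eigenvalue $\tfrac{1}{2}$ is attained at $|n|=3,k=0$ and at $|n|=2,k=1$). Since $V\ge 0$, this bound persists for $P_{18}\mathcal{L}P_{18}$, giving the Gaussian-$L^{2}$ estimate $\|e^{-|y|^{2}/8}U_{1}(\sigma_{1},\sigma_{2})P_{18}g\|_{2}\le e^{-(\sigma_{1}-\sigma_{2})/2}\|e^{-|y|^{2}/8}g\|_{2}$.

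Next I would upgrade this to the advertised weighted $L^{\infty}$ bound via a small-time smoothing followed by the $L^{2}$ decay. Split $\sigma_{1}-\sigma_{2}=\delta_{0}+(\sigma_{1}-\sigma_{2}-\delta_{0})$ with a small fixed $\delta_{0}>0$ and use the semigroup property $U_{1}(\sigma_{1},\sigma_{2}) = U_{1}(\sigma_{1},\sigma_{1}-\delta_{0})\,U_{1}(\sigma_{1}-\delta_{0},\sigma_{2})$. The explicit Mehler-kernel representation of $e^{-\delta_{0}L_{2}}$ (the Ornstein--Uhlenbeck semigroup in $y$ tensored with the $\theta$-heat semigroup, corrected by the bounded $V$) yields the smoothing inequality $\|\langle y\rangle^{-3}e^{|y|^{2}/8}e^{-\delta_{0}L_{2}}h\|_{\infty}\lesssim \|e^{-|y|^{2}/8}h\|_{2}$. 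Composed with the $L^{2}$ decay just established on the long remaining interval and the elementary dual embedding $\|e^{-|y|^{2}/8}h\|_{2}\lesssim \|\langle y\rangle^{-3}e^{|y|^{2}/8}h\|_{\infty}$ (valid because $\int\langle y\rangle^{6}e^{-|y|^{2}/4}dy<\infty$), this produces weighted $L^{\infty}$ decay at rate $\tfrac{1}{2}-\epsilon$ for any small $\epsilon$; choosing $\epsilon=\tfrac{1}{10}$ gives the announced constant $\tfrac{2}{5}$. Continuity of $P_{18}$ in the weighted norm is automatic since its range is spanned by Gaussian-times-polynomial modes and the pairings $\langle h,\phi_{j}\rangle$ are absolutely convergent against $\|\langle y\rangle^{-3}e^{|y|^{2}/8}h\|_{\infty}$.

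The expected main obstacle is that $V$ is not uniformly small: by \eqref{eq:NewPoten} only $\|V\|_{\infty}\le c(\epsilon)R^{1/4}$, which becomes large in $R$. My plan is to never treat $V$ as a perturbation in $L^{\infty}$; instead $V$ stays inside the semigroup at every step. Its non-negativity supplies the $L^{2}$ spectral gap used above, and in the smoothing step $V$ is supported on the thin annulus $|y|\in[R(1+\epsilon-2R^{-1/4}),R(1+\epsilon)]$ where $\langle y\rangle^{-3}\lesssim R^{-3}$. Hence its contribution to any Duhamel correction is of order $R^{-3}\cdot R^{1/4}=R^{-11/4}\ll 1$ for $\tau_{0}$ large by \eqref{eq:EpsiTau}, and is absorbed into the constant $C$. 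The derivative bound \eqref{eq:deriveSmooth} ensures that derivatives of $V$ introduced by the Mehler kernel do not spoil the smoothing estimate, closing the proof of the $\tfrac{2}{5}$ decay.
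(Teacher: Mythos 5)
Your spectral--gap computation is correct and clean: the $18$ functions in \eqref{eq:orthow} are exactly the eigenfunctions of $\mathcal{L}_0$ with non-positive eigenvalue $\tfrac{|n|}{2}+\tfrac{k^2}{2}-1$, the next eigenvalue is $\tfrac12$, and since $V\ge 0$ one indeed gets $\|e^{-\frac18|y|^2}U_1(\sigma_1,\sigma_2)P_{18}g\|_2\le e^{-\frac12(\sigma_1-\sigma_2)}\|e^{-\frac18|y|^2}g\|_2$ by an energy estimate, even though $\mathcal{L}$ is $\tau$-dependent. The ``dual embedding'' $\|e^{-\frac18|y|^2}h\|_2\lesssim\|\langle y\rangle^{-3}e^{\frac18|y|^2}h\|_\infty$ is also correct.

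The gap is the smoothing step
\begin{equation*}
\|\langle y\rangle^{-3}e^{\frac18|y|^2}\,e^{-\delta_0\mathcal{L}}h\|_\infty\lesssim\|e^{-\frac18|y|^2}h\|_2,
\end{equation*}
which is false, and no variant of the Mehler kernel will rescue it. The two norms live on genuinely different function classes: the Gaussian $L^2$ norm is finite for any $h$ that grows strictly slower than $e^{\frac18|y|^2}$, whereas finiteness of $\|\langle y\rangle^{-3}e^{\frac18|y|^2}h\|_\infty$ forces $|h(y)|\lesssim\langle y\rangle^3 e^{-\frac18|y|^2}$, i.e.\ Gaussian \emph{decay}. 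A short-time parabolic flow cannot convert near-$e^{\frac18|y|^2}$ growth into $e^{-\frac18|y|^2}$ decay. Concretely, take $h$ supported in a unit ball around $y_0$ with $|y_0|=M$ and height $e^{\frac14 M^2}$; then $\|e^{-\frac18|y|^2}h\|_2=O(1)$, but after time $\delta_0$ the evolved function is still a bump of comparable height near $|y|\approx Me^{-\delta_0/2}$, so $\|\langle y\rangle^{-3}e^{\frac18|y|^2}e^{-\delta_0\mathcal{L}}h\|_\infty\gtrsim M^{-3}e^{cM^2}\to\infty$. Projecting by $P_{18}$ does not help: the removed $18$ modes are Gaussian-weighted polynomials and are exponentially small near $|y|=M$, so the projection barely changes $h$. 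The same objection undermines your Duhamel absorption of $V$, since the claimed $R^{-3}$ factor comes from evaluating $\langle y\rangle^{-3}$ on the \emph{support} of $V$, whereas the weight in the output norm sits at the output point $y$, not at the intermediate $z$ where $V(z)$ acts. This is precisely why the paper never passes through $L^2$: it proves the decay directly in the $\|\langle y\rangle^{-3}e^{\frac18|y|^2}\cdot\|_\infty$ norm, first Fourier-decomposing in $\theta$ (so the orthogonality conditions split by mode), then invoking the Feynman--Kac representation together with $V\ge 0$ to reduce pointwise to the $V$-free propagator, and then proving the decay of the $V$-free kernel acting on the orthogonal complement by integrating by parts (up to three times) in the Mehler kernel. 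The Duhamel comparison the paper does make is between the projected and unprojected propagators of $\mathcal{L}_0+V$, with the perturbation $(1-\tilde{P})V$ being exponentially small ($e^{-\frac15 R^2}$) because it is a pairing of $Vg$ against Gaussian-polynomial modes that are tiny on the support of $V$ --- a much stronger smallness than the $R^{-11/4}$ you aim for, and one that does not require any $L^2\to L^\infty$ smoothing.
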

The lemma will be proved in Appendix \ref{sec:propagator}.

Return to \eqref{eq:3w} and apply the propagator estimate to find
\begin{align}
\begin{split}
\Big\|\langle y\rangle^{-3}\chi_{R}w(\cdot,\tau)\Big\|_{\infty}&\lesssim e^{-\frac{2}{5}(\tau-\tau_0)}\Big\|\langle y\rangle^{-3}\chi_{R}w(\cdot,\tau_0)\Big\|_{\infty}\\
+&
\int_{\tau_0}^{\tau} e^{-\frac{2}{5}(\tau-s)} \Big\|\langle y\rangle^{-3}\Big(\chi_{R}[F+G+ N_1+N_2]+\Psi(w)\Big)(s)\Big\|_{\infty}\ ds.\label{eq:3wEnd}
\end{split}
\end{align}

The terms on the right hand side satisfy the following estimates:
\begin{proposition}\label{Prop:weight3}
Recall the constants $\tilde\delta$ and $\kappa(\epsilon)$ from \eqref{eq:defTDelta} and \eqref{eq:defKappa}.
\begin{align}
\|\langle y\rangle^{-3}\Psi(w)\|_{\infty} \lesssim & \kappa(\epsilon)\tilde\delta R^{-4}+\tau^{-\frac{1}{2}},\label{eq:est3Psi}\\
\|\langle y\rangle^{-3}\chi_{R}(F+G)\|_{\infty}\lesssim &\tau^{-\frac{1}{2}},\label{eq:est3FG}\\
\|\langle y\rangle^{-3}\chi_{R}N_1\|_{\infty}\lesssim &\tilde\delta \kappa(\epsilon) R^{-4}+\tilde\delta R^{-4} (\mathcal{M}_2+\mathcal{M}_3)+R^{-5}(\mathcal{M}_2^2+\mathcal{M}_3^2), \label{eq:est3N1}\\
\|\langle y\rangle^{-3}\chi_{R}N_2\|_{\infty}\lesssim &\tau^{-\frac{1}{2}} +\tilde\delta R^{-4}\mathcal{M}_1.\label{eq:est3N2}
\end{align}

\end{proposition}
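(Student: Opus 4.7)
The four bounds share a strategy: on the support of $\chi_R$, substitute the decomposition \eqref{eq:decomVToW} into each source term, distribute the derivatives, and then estimate each resulting factor using one of (i) the pointwise Lipschitz bound $\|\nabla_y^k\partial_\theta^l w\|_\infty\lesssim\tilde\delta$ from \eqref{eq:estW}, (ii) the weighted pointwise bound $|\nabla_y^k\partial_\theta^l(\chi_R w)|\lesssim \mathcal{M}_j R^{-(4-|k|-l)}\langle y\rangle^{3-|k|-l}$ read off from the definitions of the $\mathcal{M}_j$, (iii) the scalar bound $|B|+|a-\tfrac12|+|\vec\beta_k|+|\alpha_l|\lesssim \tau^{-3/5}$ from \eqref{eq:paramePrelim}, or (iv) the ODE bounds \eqref{eq:Beqn}--\eqref{eq:alpha1Eqn} together with \eqref{eq:TauAB} for the scalar time derivatives. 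One then takes $\sup_y$ of $\langle y\rangle^{-3}\chi_R$ times the resulting product; the worst case is always at either $|y|\sim 1$ or $|y|\sim R$.

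For \eqref{eq:est3Psi}, the first four terms in \eqref{eq:defPsiw} are all supported on the annulus $\{R\le|y|\le(1+\epsilon)R\}$; using $|\nabla_y^k\chi_R|\lesssim\kappa(\epsilon)R^{-|k|}$, $|\nabla_y^k w|\lesssim\tilde\delta$, and $\langle y\rangle^{-3}\sim R^{-3}$ on this annulus produces the $\tilde\delta\kappa(\epsilon)R^{-4}$ contribution. The two terms with coefficient $V_{a,B}^{-2}-\tfrac12$ are supported on the full ball; since $|V_{a,B}^{-2}-\tfrac12|\lesssim|a-\tfrac12|+|B||y|^2\lesssim\tau^{-3/5}(1+|y|^2)$ and $|\partial_\theta^2 w|,|w|\lesssim\tilde\delta$, the factor $(1+|y|^2)$ is absorbed against $\langle y\rangle^{-3}$ and yields the $\tau^{-1/2}$ contribution. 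For \eqref{eq:est3FG}, every prefactor appearing in \eqref{eq:source} (namely $\partial_\tau B+B^TB$, $\tfrac{a_\tau}{a}+1-2a+\mathrm{tr}\,B$, $|B|^3$ or $|a_\tau||B|$) is $\lesssim\tau^{-1/2}$ by \eqref{eq:Beqn}--\eqref{eq:Aeqn} and \eqref{eq:TauAB}, and the polynomial-in-$|y|$ growth is tamed by $\langle y\rangle^{-3}$ together with $(2+y^TBy)^{-1/2}\lesssim 1$ on $\mathrm{supp}\,\chi_R$; $G$ is handled analogously via \eqref{eq:beta1}--\eqref{eq:alpha1Eqn}.

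For \eqref{eq:est3N1} I would begin with the crude bound $|N_1|\lesssim|\nabla_y v|^2+|\partial_\theta v|^2$ of \eqref{eq:easyN1} (valid under \eqref{eq:condition1}), expand both derivatives via \eqref{eq:decomVToW}, and square. Pure $V_{a,B}$ and pure scalar contributions give $\lesssim\tau^{-6/5}(1+|y|^2)$, absorbed into $\tilde\delta\kappa(\epsilon)R^{-4}$ after weighting. The purely $w$-quadratic pieces admit two complementary bounds: using $\mathcal{M}_j$ on both factors, $(\nabla_y w)^2\lesssim R^{-6}\langle y\rangle^4\mathcal{M}_2^2$, so $\langle y\rangle^{-3}(\nabla_y w)^2\chi_R\lesssim R^{-5}\mathcal{M}_2^2$ at $|y|\sim R$; alternatively, using \eqref{eq:estW} on one factor, $(\nabla_y w)^2\lesssim\tilde\delta R^{-3}\langle y\rangle^2\mathcal{M}_2$, i.e.\ $\tilde\delta R^{-4}\mathcal{M}_2$ after weighting (again at $|y|\sim R$). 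Summing these and the $\mathcal{M}_3$ analogues yields the stated bound. For \eqref{eq:est3N2}, use $|N_2(\eta)|\lesssim\eta^2+|\eta||\partial_\theta^2\eta|$ from \eqref{eq:defN2eta}, expand $\eta$ via \eqref{eq:decomW}, and apply the same dichotomy: in the $|w||\partial_\theta^2 w|$ piece there is no independent controlling function for $\partial_\theta^2 w$, so the $\tilde\delta$ budget must be spent there, producing the linear-in-$\mathcal{M}_1$ term $\tilde\delta R^{-4}\mathcal{M}_1$ rather than an $R^{-5}\mathcal{M}_1^2$ term; scalar and cross terms contribute $\lesssim\tau^{-1/2}$. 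The main obstacle will be the combinatorial bookkeeping of the many cross-products generated by expanding \eqref{eq:decomVToW} inside the quadratic nonlinearities, and, at each factor, deciding whether the pointwise $\tilde\delta$-bound or the $\mathcal{M}_j$-weighted bound produces the stated right-hand side; the correct choice is in every instance dictated by the worst-case location $|y|\sim R$.
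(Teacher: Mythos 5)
Your treatment of \eqref{eq:est3FG} and \eqref{eq:est3N2} matches the paper's, and your strategy for \eqref{eq:est3N1} (expand $v$ via \eqref{eq:decomVToW} inside the crude bound $|N_1|\lesssim|\nabla_y v|^2+|\partial_\theta v|^2$ and estimate cross terms by mixing $\tilde\delta$ with the $\mathcal{M}_j$-weighted bounds) is essentially the paper's product-of-sups argument in disguise, though you should be careful that for the factored pointwise bound $\langle y\rangle^{-3}|\nabla_y w|^2\lesssim\tilde\delta\langle y\rangle^{-1}R^{-3}\mathcal{M}_2$ the supremum is attained near $|y|\sim 1$, not $|y|\sim R$ — the claimed ``worst case is always $|y|\sim R$'' is not correct term by term, and what saves you there is that the pure $\mathcal{M}_2^2$ bound $\langle y\rangle R^{-6}\mathcal{M}_2^2\leq R^{-5}\mathcal{M}_2^2$ already controls the supremum globally.

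There is, however, a genuine gap in your treatment of \eqref{eq:est3Psi}. You assert that the first term of $\Psi(w)$, namely $\tfrac12(y\cdot\nabla_y\chi_R)(1-\tilde\chi_R)w$, contributes $\tilde\delta\kappa(\epsilon)R^{-4}$ using only the general bound $|\nabla_y\chi_R|\lesssim\kappa(\epsilon)R^{-1}$. But this term carries the extra factor $|y|\sim R$ on the support of $\nabla_y\chi_R$, so the naive estimate gives
$\langle y\rangle^{-3}\,\big|\tfrac12 y\cdot\nabla_y\chi_R\big|\,\big|(1-\tilde\chi_R)w\big|\lesssim R^{-3}\cdot R\cdot\kappa(\epsilon)R^{-1}\cdot\tilde\delta=\kappa(\epsilon)\tilde\delta R^{-3}$,
one full power of $R$ short of what is needed, and $R^{-3}\sim(\ln\tau)^{-3/2}\gg\tau^{-1/2}$, so this cannot be absorbed into the $\tau^{-1/2}$ term either. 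The paper handles this via the estimate \eqref{eq:tails}, which is \emph{not} a consequence of the generic bound $|\nabla_y\chi_R|\lesssim\kappa(\epsilon)R^{-1}$: it relies on the specific construction of $\chi$ in \eqref{eq:properties} (the cutoff vanishes to order $20$ at $|z|=1+\epsilon$, so $\chi'$ vanishes to order $19$) combined with the fact that $1-\tilde\chi_R$ is supported only within $O(R^{3/4})$ of the outer boundary of $\mathrm{supp}\,\chi_R$. On that thin annulus $|\,|y|/R-(1+\epsilon)|\lesssim R^{-1/4}$, hence $|\chi'(|y|/R)|\lesssim R^{-19/4}$, and the product $(y\cdot\nabla_y\chi_R)(1-\tilde\chi_R)$ is bounded by a large negative power of $R$ (the paper records $\leq R^{-5}$). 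Without invoking this pairing of the high-order vanishing of $\chi$ with the placement of $\mathrm{supp}(1-\tilde\chi_R)$, the bound in \eqref{eq:est3Psi} does not close.
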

The proposition will be proved in Subsection \ref{subsec:weight3}. 

We continue to study \eqref{eq:3wEnd} by applying Proposition \ref{Prop:weight3} to obtain
\begin{align}
\begin{split}\label{eq:integ}
\|\langle y\rangle^{-3}\chi_{R}w(\cdot,\tau)\|_{\infty}
\lesssim &e^{-\frac{2}{5}(\tau-\tau_0)} \tau_0^{-\alpha} +\tau^{-\frac{1}{2}}+\tilde\delta R^{-4} \big(\kappa(\epsilon)+\sum_{k=1}^{3}\mathcal{M}_k\big)+R^{-5}\sum_{l=2,3}\mathcal{M}_l^2,
\end{split}
\end{align}
here we used several facts: (a) $\|\langle y\rangle^{-3}\chi_{R}w(\cdot,\tau_0)\|_{\infty}\leq \tau_0^{-\alpha}$ from \eqref{eq:IniWeighted}, (b) $\mathcal{M}_k,\ k=1,2,3,4,$ are increasing functions by their definitions, see \eqref{eq:defM}; (c) we claim that, for any $k>0,$ there exists a constant $C_k$ such that
\begin{align}
\int_{\tau_0}^{\tau}  e^{-\frac{2}{5}(\tau-s)} R^{-k}(s)\ ds \leq C_{k} R^{-k}(\tau);\label{eq:Lhos1}
\end{align} and lastly (d) we claim that
\begin{align}
\int_{\tau_0}^{\tau}  e^{-\frac{2}{5}(\tau-s)} s^{-\frac{1}{2}}\ ds \lesssim  \tau^{-\frac{1}{2}}.\label{eq:Lhos2}
\end{align}
\eqref{eq:Lhos1} and \eqref{eq:Lhos2} will be proved shortly.

\eqref{eq:integ} implies the desired \eqref{eq:estM1}. We choose a sufficiently large $\tau_0$ to make $\tau_0^{-\alpha}R^{4}(\tau_0)\leq 1$ and $\tau^{-\frac{1}{2}}\leq R^{-4}(\tau)$ for $\tau\geq \tau_0,$ which is achievable since $R(\tau)=\mathcal{O}(\sqrt{\ln \tau})$ grows much slower than $\tau^{\beta}$ for any $\beta>0.$  Thus
\begin{align}
\|\langle y\rangle^{-3}\chi_{R}w(\cdot,\tau)\|_{\infty}\leq R^{-4}(\tau)+\tilde\delta R^{-4} \Big(\kappa(\epsilon)+\sum_{k=1}^{3}\mathcal{M}_k\Big)+R^{-5}\sum_{l=2,3}\mathcal{M}_l^2.\label{eq:M1End}
\end{align}
This, together with the definition of $\mathcal{M}_1$ in \eqref{eq:defM}, implies the desired estimate \eqref{eq:estM1}.

To complete the proof we need to prove \eqref{eq:Lhos1} and \eqref{eq:Lhos2}.

The proof of \eqref{eq:Lhos1} can be simplified since the function $R^{-k}(\tau)$ is equivalent to the function $\min\Big\{R^{-k}(\tau_0), \big(\ln (2+\tau-\tau_0)\big)^{-k}\Big\}$ in the sense that, for some constant $C_k$, 
\begin{align}
\frac{1}{C_k}\leq \frac{R^{-k}(\tau)}{ \min\Big\{R^{-k}(\tau_0), \big(\ln (2+\tau-\tau_0)\big)^{-k}\Big\}}\leq C_k.\label{eq:comparable}
\end{align} It is easier to estimate the equivalent function: compute directly to find
\begin{align}
\int_{\tau_0}^{\tau} e^{-\frac{2}{5}(\tau-s)} R^{-k}(\tau_0)\ ds=R^{-k}(\tau_0)\int_{\tau_0}^{\tau} e^{-\frac{2}{5}(\tau-s)} ds \lesssim R^{-k}(\tau_0),
\end{align} and apply L$'$Hospital's rule to obtain, for some $A_k>0,$
\begin{align}
\int_{\tau_0}^{\tau} e^{-\frac{2}{5}(\tau-s)} \Big(\ln (2+s-\tau_0)\Big)^{-k} ds\leq A_k\Big(\ln (2+\tau-\tau_0)\Big)^{-k}.
\end{align}
We take the minimum of these estimates, and then use \eqref{eq:comparable} to obtain the desired \eqref{eq:Lhos1}. 

The estimate \eqref{eq:Lhos2} will be proved similarly, hence we skip the details here.

\subsection{Proof of Proposition \ref{Prop:weight3}}\label{subsec:weight3}

\begin{proof}
Compared to the subsequent subsections, the proof here is the most detailed so that we can focus on some other details later. Thus the readers are advised to read this part first.

To prove \eqref{eq:est3Psi}, we use that $\sum_{|k|+l\leq 2}|\partial_{\theta}^{l}\partial_{y}^{k}w|\leq \tilde\delta$ in \eqref{eq:estW} to find
\begin{align}
\begin{split}\label{eq:3Psiw}
\|\langle y\rangle^{-3}\Psi(w)\|_{\infty}& \lesssim  \tilde\delta\Big\{ \Big\|\langle y\rangle^{-3}(\frac{1}{2}y\cdot \nabla_{y} \chi_{R})\ (1-\tilde\chi_{R}) \Big\|_{\infty}+\Big\|\langle y\rangle^{-3}\partial_{\tau}\chi_{R}
\Big\|_{\infty}\\
&+\Big\|\langle y\rangle^{-3}\Delta_{y}\chi_{R}\Big\|_{\infty}+\Big\|\langle y\rangle^{-3}\nabla_{y}\chi_{R}\Big\|_{\infty}+\Big\|1_{|y|\leq (1+\epsilon)R} \Big(V_{a,B}^{-2}-\frac{1}{2}\Big)\Big\|_{\infty}\Big\}.
\end{split}
\end{align}

For the terms on the right hand side, we apply \eqref{eq:tails} to the first one to find
\begin{align}
\Big\|\langle y\rangle^{-3}(\frac{1}{2}y\cdot \nabla_y \chi_{R})(1-\tilde\chi_{R})\Big\|_{\infty}\leq R^{-5};
\end{align}
the second, third and fourth terms are supported by the set $|y|\geq R$ by \eqref{eq:defChi3}-\eqref{eq:reCutoff}, hence
\begin{align}
\sum_{|k|=1,2}\langle y\rangle^{-3}|\nabla_y^{k} \chi_{R}|\leq \kappa(\epsilon)R^{-3-|k|},
\end{align}
where, recall the definition of $\kappa(\epsilon)$ in \eqref{eq:defKappa}, and thus
\begin{align}
\Big\|\langle y\rangle^{-3}\partial_{\tau}\chi_{R}
\Big\|_{\infty}+\Big\|\langle y\rangle^{-3}\Delta_{y}\chi_{R}\Big\|_{\infty}+\Big\|\langle y\rangle^{-3}\nabla_{y}\chi_{R}\Big\|_{\infty}\lesssim \kappa(\epsilon) R^{-4};
\end{align}
and we control the last one by using \eqref{eq:paramePrelim}, 
\begin{align}\label{eq:difV2}
\Big|
V_{a,B}^{-2}-\frac{1}{2}\Big|\leq |B||y|^2+|a-\frac{1}{2}| \lesssim \tau^{-\frac{1}{2}}\ \text{when} \ |y|\leq (1+\epsilon)R=\mathcal{O}(\sqrt{\ln \tau}).
\end{align}

So far we have completed estimating all the terms on the right hand side of \eqref{eq:3Psiw}. What is left is to collect the estimates to complete the proof of \eqref{eq:est3Psi},
\begin{align}
\|\langle y\rangle^{-3}\Psi(w)\|_{\infty} 
\lesssim \tilde\delta\kappa(\epsilon) R^{-4}+\tau^{-\frac{1}{2}}.
\end{align}

It is easy to prove \eqref{eq:est3FG} by the estimates provided by \eqref{eq:paramePrelim} and \eqref{eq:TauAB}.

To prepare for proving \eqref{eq:est3N1}, we use that $\sum_{|k|+l=1,2}|\partial_{\theta}^{l}\nabla_{y}^{k}v|\lesssim \delta$ in \eqref{eq:condition1} to obtain
\begin{align}
\chi_{R}|N_1|\lesssim \chi_{R}|\nabla_{y}v|^2+\chi_{R}|\partial_{\theta}v|^2.\label{eq:zeroDN1}
\end{align}
For the first term on the right hand side, we decompose $v$ as in \eqref{eq:decomVToW} and then apply \eqref{eq:paramePrelim} to obtain, when $|y|\leq (1+\epsilon)R=\mathcal{O}(\sqrt{\ln \tau}),$
\begin{align*}
|\nabla_y v|^2\leq \Big(|\nabla_y V_{a,B}|+\sum_{k=1}^{3}|\Omega_k|+|\nabla_y w|\Big)^2\leq 2\tau^{-\frac{1}{2}}+2|\nabla_y w|^2.
\end{align*} 
Consequently
\begin{align}
\begin{split}
\Big\|\langle y\rangle^{-3}\chi_{R} (\nabla_{y}v)^2\Big\|_{\infty} \lesssim & \tau^{-\frac{1}{2}}+\Big\|\langle y\rangle^{-3} \chi_{R} |\nabla_y w|^2\Big\|_{\infty}.
\end{split}
\end{align} 
Then we change the orders of the operators $\chi_{R}$ and $\nabla_y$ to find
\begin{align*}
\begin{split}
&\Big\|\langle y\rangle^{-3} \chi_{R} |\nabla_y w|^2\Big\|_{\infty}\\
\leq &\Big\|\langle y\rangle^{-2}  \nabla_y \chi_{R} w\Big\|_{\infty}\Big\|\langle y\rangle^{-1} 1_{|y|\leq (1+\epsilon)R} \nabla_y w \Big\|_{\infty}+\Big\|\langle y\rangle^{-3}  (\nabla_y \chi_{R}) \Big\|_{\infty}\Big\|1_{|y|\leq(1+\epsilon)R}w\nabla_y w\Big\|_{\infty}\\
\leq &\|\langle y\rangle^{-2} \nabla_y\chi_{R}  w\|_{\infty}\Big\{\Big\|\langle y\rangle^{-1} \nabla_y \chi_{R} w \Big\|_{\infty}+\Big\|\langle y\rangle^{-1} 1_{|y|\leq (1+\epsilon)R}\nabla_y(1-\chi_{R})  w \Big\|_{\infty}\Big\}+\tilde\delta^2 \kappa(\epsilon) R^{-4}\\
\lesssim & \|\langle y\rangle^{-2}  \nabla_y\chi_{R} w\|_{\infty}\Big[R\|\langle y\rangle^{-2} \nabla_y\chi_{R}  w \|_{\infty}+\tilde\delta R^{-1}\Big]+\tilde\delta \kappa(\epsilon) R^{-4},
\end{split}
\end{align*}where we used that the functions $1_{|y|\leq(1+\epsilon)R}(1-\chi_{R})$ and $\nabla_y \chi_{R}$ are supported by the set $\Big\{y\ \Big|\ |y|\in [R,\ (1+\epsilon)R]\Big\},$ and thus  here $\langle y\rangle^{-1}\leq R^{-1}$; and that $|\nabla_y w|+ |w|\lesssim \tilde\delta$ in \eqref{eq:estW}; and $1_{|y|\leq (1+\epsilon)R}$ is the standard Heaviside function, see \eqref{eq:heavi}. 
Thus
\begin{align}\label{eq:zeroDN1Term1}
\|\langle y\rangle^{-3}\chi_{R} (\nabla_{y}v)^2\|_{\infty} \lesssim & \tilde\delta \kappa(\epsilon) R^{-4}+\tau^{-\frac{1}{2}}+\tilde\delta R^{-1}\|\langle y\rangle^{-2}  \nabla_y\chi_{R} w\|_{\infty} +R \|\langle y\rangle^{-2}  \nabla_y\chi_{R} w\|_{\infty}^2\nonumber\\
\lesssim&\tilde\delta \kappa(\epsilon) R^{-4}+\tau^{-\frac{1}{2}}+\tilde\delta R^{-4} \mathcal{M}_2+R^{-5} \mathcal{M}_2^2.
\end{align} 

This completes the treatment for the first term on the right hand side of \eqref{eq:zeroDN1}.

It is easier to estimate the second term of \eqref{eq:zeroDN1} since $\partial_{\theta}$ and $\chi_{R}$ commute,
\begin{align}\label{eq:zeroDN1Term2}
\|\langle y\rangle^{-3}\chi_{R} (\partial_{\theta}v)^2\|_{\infty} \lesssim 
\tilde\delta \kappa(\epsilon) R^{-4}+\tau^{-\frac{1}{2}}+\tilde\delta R^{-4} \mathcal{M}_3+R^{-5} \mathcal{M}_3^2.
\end{align}

\eqref{eq:zeroDN1Term1}, \eqref{eq:zeroDN1Term2} and \eqref{eq:zeroDN1} imply the desired \eqref{eq:est3N1}.

To prove \eqref{eq:est3N2}, the definition of $N_2(\eta)$ in \eqref{eq:defN2eta} implies
\begin{align}
\|\langle y\rangle^{-3}\chi_{R}N_2(\eta)\|_{\infty}\lesssim \|\langle y\rangle^{-3}\chi_{R}\eta^2\|_{\infty}+ \|\langle y\rangle^{-3}\chi_{R}\eta\partial_{\theta}^2 \eta\|_{\infty}
\lesssim \tau^{-\frac{1}{2}} +\tilde\delta \|\langle y\rangle^{-3}\chi_{R}w\|_{\infty},\label{eq:y3N2}
\end{align} where we used $|\eta|+ |\partial_{\theta}^2 \eta|\lesssim \tilde\delta$ implied by the decomposition $v=V_{a,B}+\eta$ and \eqref{eq:condition1}.
This, together with the definition of $\mathcal{M}_1$ in \eqref{eq:defM}, implies the desired result \eqref{eq:est3N2}.

\end{proof}


\section{Proof of \eqref{eq:estM2} }\label{sec:estM2}

 The treatment is similar to that in Section \ref{sec:estM1}, thus we will skip some details.

We derive a governing equation for $\nabla_y \chi_{R}w$ by taking a $y$-gradient on \eqref{eq:weightLinfw},
\begin{align}
\begin{split}\label{eq:ychiw}
\partial_{\tau}\nabla_y \chi_{R}w=&-(L_2+\frac{1}{2})\nabla_y \chi_{R}w+\nabla_y \chi_{R}(F+G+N_1+N_2)+ \tilde\Psi(w)
\end{split}
\end{align} where $\frac{1}{2}$ in the linear operator is from the commutation relation \eqref{eq:commt},
$\tilde\Psi(w)$ is defined as
\begin{align*}
\tilde\Psi(w):=\nabla_y \Psi(w)
+ \frac{1}{2}\Big(\nabla_y\frac{\tilde\chi_{R}\ y\nabla_{y} \chi_{R}  }{\chi_{R}}\Big) \chi_{R}w.
\end{align*}
Multiply both sides by $e^{-\frac{1}{8}|y|^2}$and find, recall the definition of $\mathcal{L}$ from (\ref{eq:selfadw}),
\begin{align}
\begin{split}\label{eq:ychiw2}
\partial_{\tau}e^{-\frac{1}{8}|y|^2}\nabla_y \chi_{R}w=&-(\mathcal{L}+\frac{1}{2})e^{-\frac{1}{8}|y|^2}\nabla_y \chi_{R}w\\
&+e^{-\frac{1}{8}|y|^2}\nabla_y \chi_{R}\Big(F+G+N_1+N_2\Big)+e^{-\frac{1}{8}|y|^2} \tilde\Psi(w).
\end{split}
\end{align}

(\ref{eq:orthow}) implies that $e^{-\frac{1}{8} |y|^2}\nabla_{y}\chi_{R}w$ enjoys some orthogonality conditions,
\begin{align}
e^{-\frac{1}{8} |y|^2}\nabla_{y}\chi_{R}w\perp e^{-\frac{1}{8} |y|^2}, \ e^{-\frac{1}{8} |y|^2} cos\theta,\ e^{-\frac{1}{8} |y|^2} sin\theta,\ e^{-\frac{1}{8} |y|^2} y_k, \ k=1,2,3.
\end{align} Denote, by $P_6$, the orthogonal projection onto the subspace orthogonal to these six functions.

Apply $P_{6}$ and then Duhamel's principle to rewrite \eqref{eq:ychiw2}
\begin{align}
\begin{split}\label{eq:transform3}
e^{-\frac{1}{8}|y|^2}\nabla_y \chi_{R}w(\tau) =&U_2(\tau,\tau_0) e^{-\frac{1}{8}|y|^2}\nabla_y \chi_{R}w(\tau_0)\\
+\int_{\tau_0}^{\tau} U_2(\tau,s)& P_{6}e^{-\frac{1}{8}|y|^2}\Big[\nabla_y \chi_{R}(F+G+N_1+N_2)+\tilde\Psi(w)\Big](s)\ ds,
\end{split}
\end{align} where $U_2(\tau,\sigma)$, $\tau\geq \sigma,$ is the propagator generated by the linear operator $-P_{6}(\mathcal{L}+\frac{1}{2})P_{6}$.

The propagator generates a decay rate.
\begin{lemma}\label{LM:propagator3}
For any function $g$ and times $\sigma_1$ and $\sigma_2$ with $\sigma_1\geq\sigma_2,$
\begin{align}
\Big\|\langle y\rangle^{-2} e^{\frac{1}{8}|y|^2} U_2(\sigma_1,\sigma_2) P_{6}g\Big\|_{\infty}\lesssim e^{-\frac{2}{5}(\sigma_1-\sigma_2)} \Big\|\langle y\rangle^{-2}e^{\frac{1}{8}|y|^2} g\Big\|_{\infty}.
\end{align}
\end{lemma}
The lemma will be proved in Section \ref{sec:propagator}.

Returning to \eqref{eq:transform3}, we apply Lemma \ref{LM:propagator3} to obtain
\begin{align}
\begin{split}\label{eq:appU2}
\Big\|\langle y\rangle^{-2}\nabla_y \chi_{R}w(\cdot,\tau)\Big\|_{\infty}\lesssim &e^{-\frac{2}{5}(\tau-\tau_0)}\Big\|\langle y\rangle^{-2}\nabla_y \chi_{R}w(\cdot,\tau_0)\Big\|_{\infty}\\
+\int_{\tau_0}^{\tau} e^{-\frac{2}{5}(\tau-s)}&\Big\|\langle y\rangle^{-2}\Big[\nabla_y \chi_{R}(F+G+N_1+N_2)+\tilde\Psi(w)\Big](s)\Big\|_{\infty}\ ds.
\end{split}
\end{align}

Next we estimate the terms on the right hand side. Since similar estimates will be needed for the terms in \eqref{eq:esty2Theta} below, the following results are more than the present need.
\begin{proposition}\label{Prop:2weiDy}
For $|k|+l=1$, the following estimates hold,
\begin{align}
\|\langle y\rangle^{-2}\partial_{\theta}^{l}\nabla_{y}^{k} \chi_{R}(F+G)\|_{\infty}\leq &\tau^{-\frac{1}{2}},\label{eq:2weiFG}\\
\|\langle y\rangle^{-2}\partial_{\theta}^{l}\nabla_{y}^{k} \chi_{R}N_1\|_{\infty}\lesssim &\tau^{-\frac{1}{2}}+\tilde\delta \kappa(\epsilon) R^{-3}+\tilde\delta R^{-3} [\mathcal{M}_2+\mathcal{M}_3],\label{eq:est2WeiN1}\\
\|\langle y\rangle^{-2}\partial_{\theta}^{l}\nabla_{y}^{k} \chi_{R}N_2\|_{\infty}\lesssim &\tau^{-\frac{1}{2}}+\tilde\delta R^{-3} [\mathcal{M}_1+\mathcal{M}_2],\label{eq:est2WeiN2}\\
\|\langle y\rangle^{-2}\tilde\Psi(w)\|_{\infty}\lesssim & \tilde\delta \kappa(\epsilon)R^{-3}+\tau^{-\frac{1}{2}}+R^{-\frac{13}{4}}\mathcal{M}_1.\label{eq:est2WeiPsi}
\end{align}
\end{proposition}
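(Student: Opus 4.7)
The plan is to mirror the proof of Proposition \ref{Prop:weight3} (which handled the $\langle y\rangle^{-3}$, zero-derivative case) with two adjustments: one extra derivative lands on every ingredient, and the weight is weakened to $\langle y\rangle^{-2}$, which permits one extra factor of $|y| \lesssim R$ on the support of $\chi_R$. I will handle the four estimates in order. For \eqref{eq:2weiFG}, the expressions $F(B,a)$ and $G(\vec\beta_k,\alpha_l)$ are explicit polynomial-type functions in the parameters, multiplied by factors of $y$ and trigonometric functions in $\theta$, with polynomial $y$-growth tempered by the support $|y|\leq (1+\epsilon)R=\mathcal{O}(\sqrt{\ln\tau})$. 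A term-by-term Leibniz expansion of $\partial_\theta^l\nabla_y^k(\chi_R F)$ and $\partial_\theta^l\nabla_y^k(\chi_R G)$, combined with the scalar estimates \eqref{eq:paramePrelim} and \eqref{eq:TauAB}, gives $\tau^{-1/2}$.

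For \eqref{eq:est2WeiN1}, I differentiate $N_1$ using Leibniz and use $|\nabla_y v|,|\partial_\theta v|\leq \delta$ on $|y|\leq (1+\epsilon)R$ to absorb the denominators, reducing $\chi_R|\partial_\theta^l\nabla_y^k N_1|$ to a sum of terms each carrying at least two factors among $\nabla_y v$, $\partial_\theta v$, and their first derivatives. Next, decompose $v=V_{a,B}+\eta$ via \eqref{eq:decomVToW}, insert $1=\chi_R+(1-\chi_R)$ before every $w$-factor, and commute $\nabla_y$ past $\chi_R$ via $\chi_R\nabla_y w=\nabla_y(\chi_R w)-(\nabla_y\chi_R)w$. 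The commutator terms are supported on $|y|\in[R,(1+\epsilon)R]$ and produce the $\tilde\delta\kappa(\epsilon)R^{-3}$ contribution; the $V_{a,B}$-pieces contribute $\tau^{-1/2}$ through $|\nabla_y V_{a,B}|\lesssim|B||y|\lesssim\tau^{-1/2}$; and the $(\nabla_y\chi_R w)$- and $(\partial_\theta\chi_R w)$-factors are bounded in terms of $\mathcal{M}_2$ and $\mathcal{M}_3$ after pairing $\langle y\rangle^{-2}$ with the $\langle y\rangle^2$-norms in their definitions. The argument for \eqref{eq:est2WeiN2} is analogous but easier: from \eqref{eq:defN2eta}, $\chi_R|\partial_\theta^l\nabla_y^k N_2|$ is controlled by products of $\eta$, $\partial_\theta^2\eta$, and their first derivatives; $|\eta|,|\partial_\theta^2\eta|\lesssim \tilde\delta$ on the support, and the $w$-pieces of $\eta$ and its derivatives supply the $\mathcal{M}_1$ and $\mathcal{M}_2$ factors.

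For \eqref{eq:est2WeiPsi}, split
\begin{equation*}
\tilde\Psi(w)=\nabla_y\Psi(w)+\Bigl(\nabla_y\tfrac{\tilde\chi_R\,y\cdot\nabla_y\chi_R}{\chi_R}\Bigr)\chi_R w.
\end{equation*}
The $\nabla_y\Psi(w)$ contribution is handled term-by-term as in \eqref{eq:est3Psi}: each summand is either a derivative of a cutoff (bounded using $|\nabla_y^k\chi_R|\lesssim \kappa(\epsilon)R^{-k}$ and $|\nabla_y^k w|\lesssim\tilde\delta$), or of the form $(V_{a,B}^{-2}-\tfrac12)\nabla_y^k\partial_\theta^l w\cdot \chi_R$ which yields $\tau^{-1/2}$ since $|V_{a,B}^{-2}-\tfrac12|\lesssim\tau^{-1/2}$ on the support. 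For the extra term, \eqref{eq:deriveSmooth} gives $|\nabla_y(\tilde\chi_R y\cdot\nabla_y\chi_R/\chi_R)|\lesssim R^{-1/4}$, so
\begin{equation*}
\bigl\|\langle y\rangle^{-2}\bigl(\nabla_y\tfrac{\tilde\chi_R\,y\cdot\nabla_y\chi_R}{\chi_R}\bigr)\chi_R w\bigr\|_\infty \lesssim R^{-1/4}\,\|\langle y\rangle^{-2}\chi_R w\|_\infty \lesssim R^{-1/4}\cdot R\cdot R^{-4}\mathcal{M}_1=R^{-13/4}\mathcal{M}_1,
\end{equation*}
where I used $\langle y\rangle\lesssim R$ on the support together with $\|\langle y\rangle^{-3}\chi_R w\|_\infty\leq R^{-4}\mathcal{M}_1$.

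The main obstacle is the bookkeeping in the $N_1$ estimate: differentiating the full nonlinear quotient structure produces many subterms, and in each one must justify commuting $\nabla_y$ or $\partial_\theta$ past $\chi_R$ so that the resulting factors of $\nabla_y\chi_R w$ or $\partial_\theta\chi_R w$ can be absorbed into $\mathcal{M}_2$ or $\mathcal{M}_3$ with the correct weight, while the commutator pieces (supported near $|y|\sim R$) collapse to the $\tilde\delta\kappa(\epsilon)R^{-3}$ contribution rather than propagating the full $\langle y\rangle^{-2}$-norm of a derivative of $w$.
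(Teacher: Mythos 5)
Your proposal tracks the paper's own proof closely: for \eqref{eq:2weiFG} the direct Leibniz-plus-scalar-estimates computation, for \eqref{eq:est2WeiN1} and \eqref{eq:est2WeiN2} the Leibniz expansion followed by $v=V_{a,B}+\eta$ decomposition, inserting $1=\chi_R+(1-\chi_R)$, commuting $\chi_R$ past $\nabla_y$, and converting the surviving $\|\langle y\rangle^{-2}\nabla_y\chi_R w\|_\infty$ and $\|\langle y\rangle^{-2}\partial_\theta\chi_R w\|_\infty$ into $R^{-3}\mathcal{M}_2$ and $R^{-3}\mathcal{M}_3$, and for \eqref{eq:est2WeiPsi} the split of $\tilde\Psi(w)$ with \eqref{eq:deriveSmooth} and $\|\langle y\rangle^{-2}\chi_R w\|_\infty\lesssim R\|\langle y\rangle^{-3}\chi_R w\|_\infty$ giving $R^{-13/4}\mathcal{M}_1$. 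Two cosmetic slips (the controlling norms use weight $\langle y\rangle^{-2}$, not $\langle y\rangle^{2}$; and in the $\nabla_y\Psi(w)$ enumeration you omit the term where $\nabla_y$ lands on $V_{a,B}^{-2}$, which the paper controls via $|\nabla_y V_{a,B}^{-2}|\lesssim|B||y|\lesssim\tau^{-1/2}$ on $|y|\lesssim(1+\epsilon)R$) do not affect the argument.
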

The proposition will be proved in subsection \ref{subsec:2weiDy}.

Suppose the proposition holds, then we prove the desired result \eqref{eq:estM2} as in \eqref{eq:integ}-\eqref{eq:M1End}. Hence we skip the details.

What is left is to prove Proposition \ref{Prop:2weiDy}. This will be done in the next subsection.

\subsection{Proof of Proposition \ref{Prop:2weiDy}}\label{subsec:2weiDy}
\begin{proof}
Each of \eqref{eq:2weiFG}-\eqref{eq:est2WeiN2} contains two estimates. Here we only consider the cases $(|k|,l)=(1,0)$, the other cases are easier since $\partial_{\theta}$ and $\chi_{R}$ commute.
Moreover since the treatment is similar to that in Subsection \ref{subsec:weight3}, here we will skip some details.

It is easy to prove \eqref{eq:2weiFG} by \eqref{eq:paramePrelim} and \eqref{eq:TauAB}.

To prepare for proving \eqref{eq:est2WeiN1}, we use $\sum_{|m|+n=1}^3|\nabla_{y}^{m}\partial_{\theta}^{n}v|\leq \delta$ from \eqref{eq:condition1} to obtain
\begin{align}
\begin{split}\label{eq:yn112}
|\nabla_{y}\chi_{R}N_{1}|=&|\nabla_y \chi_{R}|\ |N_{1}|+|\chi_{R} \nabla_y N_1|\lesssim \delta \Big[|\nabla_y \chi_{R}|+ \chi_{R} |\nabla_y v|+\chi_{R}|\partial_{\theta}v|\Big].
\end{split}
\end{align}
The first term is easy to control,  the definition $\chi_{R}(y):=\chi(\frac{y}{R})$ and that $|\nabla_y\chi_{R}(y)|=R^{-1}|\chi^{'}(\frac{y}{R})|$ is supported by the set $\Big\{y\ \Big|\ |y|\in [R,\ (1+\epsilon)R]\Big\}$ imply, 
\begin{align}
|\langle y\rangle^{-2} \nabla_y \chi_{R}|\leq \kappa(\epsilon)R^{-3},\label{eq:dychiR}
\end{align} where, recall that $\kappa(\epsilon)$ is defined in \eqref{eq:defKappa}.

For the second and third terms, the decomposition of $v$ in \eqref{eq:decomVToW} and \eqref{eq:paramePrelim} imply
\begin{align}\label{eq:SecThird}
\begin{split}
\Big\|\chi_{R}\langle y\rangle^{-2}\nabla_{y}v\Big\|_{\infty}+ &\Big\|\chi_{R}\langle y\rangle^{-2} \partial_{\theta}v\Big\|_{\infty}\\
\lesssim & \tau^{-\frac{3}{5}}+\tilde\delta \kappa(\epsilon) R^{-3}+\sum_{|k|+l=1}\Big\|\langle y\rangle^{-2}\nabla_y^k\partial_{\theta}^{l}\chi_{R} w\Big\|_{\infty}.
\end{split}
\end{align} Here $\tilde\delta \kappa(\epsilon) R^{-3}$ is used to bound $\langle y\rangle^{-2} (\nabla_{y}\chi_{R}) w$, produced in changing the orders of $\chi_{R}$ and $\nabla_y.$ 

So far we have estimated all the terms in \eqref{eq:yn112}. \eqref{eq:dychiR} and \eqref{eq:SecThird}. These, together with the definitions of $\mathcal{M}_2$ and $\mathcal{M}_3$ in \eqref{eq:defM}, imply the desired \eqref{eq:est2WeiN1}.

Next we prove \eqref{eq:est2WeiN2}. $|\partial_{\theta}^2 v|+ |\partial_{\theta}^2\nabla_y v|+ |\eta|\lesssim \tilde\delta$, from \eqref{eq:condition1}, implies
\begin{align}
\begin{split}
|\nabla_y \chi_{R}N_2|\leq &\tilde\delta\Big[ |\nabla_y (\chi_{R}\eta)| +\chi_{R}|\eta| \Big]
\lesssim \tilde\delta \Big[\tau^{-\frac{1}{2}}+|\nabla_y \chi_{R}w|+|\chi_{R}w|\Big].
\end{split}
\end{align}
This directly implies the desired result
\begin{align}
\Big\|\langle y\rangle^{-2}\nabla_y \chi_{3,R}N_2\Big\|_{\infty}\lesssim \delta \Big[\tau^{-\frac{1}{2}}+R^{-3}(\mathcal{M}_1+\mathcal{M}_2)\Big].
\end{align}

Lastly, the proof of \eqref{eq:est2WeiPsi} is similar that of \eqref{eq:est3Psi}. By the definition of $\tilde\Psi(w)$
\begin{align}\label{eq:ym2Psi}
\Big\|\langle y\rangle^{-2}\tilde\Psi(w)\Big\|_{\infty}\leq \|\langle y\rangle^{-2}\nabla_y\Psi(w)\|_{\infty}+\frac{1}{2}\sup_{y}\Big| \nabla_y \frac{\tilde\chi_{R} y\cdot \nabla_y\chi_{R}}{\chi_{R}}\Big|\ 
\Big\|\langle y\rangle^{-2}\chi_{R}w\Big\|_{\infty}.
\end{align}For the second term, the decay estimate $| \nabla_y \frac{\tilde\chi_{R} y\cdot \nabla_y\chi_{R}}{\chi_{R}}|\leq R^{-\frac{1}{4}}$ in \eqref{eq:deriveSmooth} and $\|\langle y\rangle^{-2}\chi_{R}w\|_{\infty}\leq 2 R \|\langle y\rangle^{-3}\chi_{R}w\|_{\infty},$ which is implied by that $\chi_{R}$ is supported by the set $|y|\leq 2R$, imply
\begin{align}
\sup_{y}\Big| \nabla_y \frac{\tilde\chi_{R} y\cdot \nabla_y\chi_{R}}{\chi_{R}}\Big|\ 
\|\langle y\rangle^{-2}\chi_{R}w\|_{\infty}\lesssim R^{-\frac{13}{4}} \mathcal{M}_1.
\end{align}
For the first term, the definition of $\Psi(w)$ in \eqref{eq:defPsiw} and that $\sum_{|k|+l\leq 3}|\nabla_y^{k}\partial_{\theta}^{l}w|\lesssim \tilde\delta$ and $|V_{a,B}^{-2}-\frac{1}{2}|+ |\nabla_y V_{a,B}|\leq \tau^{-\frac{1}{2}}$ derived from \eqref{eq:nablayV} and \eqref{eq:difV2}, imply
\begin{align*}
|\nabla_y \Psi(w)|\lesssim \tilde\delta\tau^{-\frac{1}{2}}+\tilde\delta\sum_{|k|=0,1}\bigg[\Big|\nabla_y^{k}\big[y\cdot \nabla_y \chi_{R} (1-\tilde{\chi}_{R})\big]\Big|+\Big|\nabla_y^{k} \partial_{\tau}\chi_{R}\Big|\bigg]+\tilde\delta\sum_{|k|=1}^{3}\Big|\nabla_y^{k} \chi_{R}\Big|.
\end{align*}
This, the definitions of $\chi_{R}(y)=\chi(\frac{|y|}{R})$ and $\tilde\chi_{R}$, and the properties in \eqref{eq:properties}, imply
\begin{align}
\|\langle y\rangle^{-2}\nabla_y \Psi(w)\|_{\infty}\lesssim \tilde\delta \kappa(\epsilon)R^{-3} +\tilde\delta \tau^{-\frac{1}{2}}.
\end{align}

Until now we estimated all the terms on the right hand side of \eqref{eq:ym2Psi}. What is left is to collect the estimate to obtain the desired results \eqref{eq:est2WeiPsi}.

\end{proof}
\section{Proof of \eqref{eq:estM3}}\label{sec:estM3}
Take a $\theta-$derivative on both sides of \eqref{eq:selfadw} to derive
\begin{align}
\begin{split}
\partial_{\tau} e^{-\frac{1}{8}|y|^2}\partial_{\theta}\chi_{R}w=&-\mathcal{L}e^{-\frac{1}{8}|y|^2}\partial_{\theta}\chi_{R}w+e^{-\frac{1}{8}|y|^2}\partial_{\theta}\Big[\chi_{R}(F+G+N_1+N_2)+
\Psi(w)\Big].
\end{split}
\end{align} 
The function $\partial_{\theta}\chi_{R}w$ enjoys the same orthogonality conditions as $\chi_{R}w$,
thus, by the definition of the orthogonal projection $P_{18}$ in \eqref{eq:defP21},
\begin{align}
P_{18} e^{-\frac{1}{8}|y|^2}\partial_{\theta}w\chi_{R}=e^{-\frac{1}{8}|y|^2}\partial_{\theta}w\chi_{R}.
\end{align} 
Apply Duhamel's principle to obtain
\begin{align}
\begin{split}\label{eq:18ThetaW}
e^{-\frac{1}{8}|y|^2}\partial_{\theta}\chi_{R}w(\cdot,\tau)=&U_{1}(\tau,\tau_0) e^{-\frac{1}{8}|y|^2}\partial_{\theta}\chi_{R}w(\cdot,\tau_0)\\
+\int_{\tau_0}^{\tau} U_{1}(\tau,s) & P_{18}e^{-\frac{1}{8}|y|^2}\partial_{\theta}\Big[\chi_{R}(F+G+N_1+N_2)+
\Psi(w)\Big](s)\ ds,
\end{split}
\end{align} where the propagator $U_1(\sigma_1,\sigma_2)$ is defined in \eqref{eq:3w}.

The propagator satisfies the following estimate.
\begin{lemma}\label{LM:propagator4}
For any function $g,$
\begin{align}
\Big\|\langle y\rangle^{-2} e^{\frac{1}{8}|y|^2} U_1(\tau,s)P_{18} \partial_{\theta}g \Big\|_{\infty}\lesssim e^{-\frac{2}{5}(\tau-s)} \Big\|\langle y\rangle^{-2} e^{\frac{1}{8}|y|^2} \partial_{\theta}g \Big\|_{\infty}
\end{align}
\end{lemma}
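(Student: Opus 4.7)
The plan is to adapt the proof of Lemma \ref{LM:propagator3} to the present setting, using that $\partial_{\theta}g$ has zero $\theta$-mean and hence lives in the range of the projection $Q$ onto $\theta$-Fourier modes $n\geq 1$; on that range the operator $-\tfrac{1}{2}\partial_{\theta}^2$ produces a gain of at least $\tfrac{1}{2}$, which plays exactly the role of the explicit $+\tfrac{1}{2}$ shift built into the generator of $U_2$.

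The first step is to record the commutations $[\partial_{\theta},\mathcal{L}]=0$ and $[\partial_{\theta},P_{18}]=0$, and hence $[\partial_{\theta}, U_1(\tau,s)]=0$. The first holds because every coefficient of $\mathcal{L}$ is $\theta$-independent, including the potential $V_R := |\tilde\chi_R\, y\cdot\nabla\chi_R/\chi_R|$, apart from $-\tfrac{1}{2}\partial_{\theta}^2$, which commutes with $\partial_{\theta}$. The second holds because the eighteen generators listed in \eqref{eq:orthow} span a subspace that is $\partial_{\theta}$-invariant: $\partial_{\theta}$ annihilates the ten $\theta$-independent generators and swaps, up to sign, the four $\cos\theta$-sector generators with the four $\sin\theta$-sector generators. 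In particular $\mathcal{L}$, $P_{18}$ and $U_1(\tau,s)$ all preserve the $\theta$-mean-zero subspace $\operatorname{Ran}(Q)$, which contains $P_{18}\partial_{\theta}g$.

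Next, on $\operatorname{Ran}(Q)\cap\operatorname{Ran}(P_{18})$ the generator $-P_{18}\mathcal{L}P_{18}$ has spectrum $\leq -\tfrac{1}{2}$, matching the spectral gap used in Lemma \ref{LM:propagator3}. Indeed, on each $\theta$-Fourier sector $n\geq 1$, $\mathcal{L}$ acts on the $y$-coefficient as $-\Delta_y + \tfrac{1}{16}|y|^2 - \tfrac{3}{4} + \tfrac{n^2}{2} - 1 + V_R$. For $n=1$, after projecting off the $n=1$ part of $P_{18}$, namely $e^{-|y|^2/8}$ and $y_k e^{-|y|^2/8}$ (the $-\tfrac{1}{2}$- and $0$-eigenfunctions of the operator before adding $V_R$), the spectrum becomes $\geq \tfrac{1}{2}$; for $n\geq 2$ no projection is needed and the spectrum is already $\geq n^2/2 - 1\geq 1$. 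This is exactly the $L^2$ spectral gap enjoyed by the generator of $U_2$ appearing in Lemma \ref{LM:propagator3}.

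Given these two inputs, the proof of Lemma \ref{LM:propagator4} is a line-by-line repetition of the proof of Lemma \ref{LM:propagator3} given in Section \ref{sec:propagator}, run inside the subspace $\operatorname{Ran}(Q)$: the explicit $+\tfrac{1}{2}$ shift there is replaced by the automatic gain from the $\theta$-mean-zero restriction, the weighted $L^\infty$ norm $\|\langle y\rangle^{-2}e^{|y|^2/8}\,\cdot\,\|_\infty$ is the same, and the rate $\tfrac{2}{5}$ emerges as usual from a $\tfrac{1}{10}$ loss in passing from the $L^2$ spectral gap $\tfrac{1}{2}$ to a weighted $L^\infty$ bound. The main obstacle, shared with Lemma \ref{LM:propagator3}, is absorbing the non-quadratic, non-spherically-symmetric potential $V_R$ into the weighted $L^\infty$ framework; this is handled exactly as there, via a Duhamel expansion against a Mehler-type kernel together with the pointwise control $|V_R|\leq c(\epsilon)R^{1/4}$ from \eqref{eq:NewPoten}.
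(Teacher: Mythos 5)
Your proposal is correct and takes essentially the same route as the paper: you observe that $\partial_\theta g$ has zero $\theta$-mean, so in the Fourier decomposition in $\theta$ (the one the paper uses in \eqref{eq:propDecom}) only modes $|n|\ge 1$ contribute, for which the weighted $L^\infty$ estimates in Theorem \ref{THM:frequencyWise} already hold with the $\langle y\rangle^{-2}$ weight and rates $\ge \tfrac{2}{5}$, and the commutation $[\partial_\theta,\mathcal L]=[\partial_\theta,P_{18}]=0$ is exactly what makes this restriction compatible with $U_1$. This is precisely why the paper declares the proof ``similar'' to Lemma \ref{LM:propagator} and omits it; your spectral-gap framing is a slightly more abstract packaging of the same mode-by-mode argument.
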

The lemma will be proved in Section \ref{sec:propagator}. 

Apply this to \eqref{eq:18ThetaW} to obtain
\begin{align}
\begin{split}
\|\langle y\rangle^{-2}\partial_{\theta}\chi_{R}w(\cdot,\tau)\|_{\infty}\lesssim & e^{-\frac{2}{5}(\tau-\tau_0)}\|\langle y\rangle^{-2}\partial_{\theta}\chi_{R}w(\cdot,\tau_0)\|_{\infty}\\
+\int_{\tau_0}^{\tau} e^{-\frac{2}{5}(\tau-s)}& \Big\|\langle y\rangle^{-2}\partial_{\theta}\Big[\chi_{R}(F+G+N_1+N_2)+
\Psi(w)\Big](s)\Big\|_{\infty}\ ds.\label{eq:esty2Theta}
\end{split}
\end{align}
$\Big\|\langle y\rangle^{-2}\partial_{\theta}\chi_{R}\Big(F+G+N_1+N_2\Big)\Big\|_{\infty}$ was estimated in Proposition \ref{Prop:2weiDy}. What is left is to control $\|\langle y\rangle^{-2}\partial_{\theta}\Psi(w)\|_{\infty}=\|\langle y\rangle^{-2}\Psi(\partial_{\theta}w)\|_{\infty}$. Similar to estimating  $\|\langle y\rangle^{-3}\Psi(w)\|_{\infty}$ in \eqref{eq:est3Psi},
\begin{align}
\Big\|\langle y\rangle^{-2}\partial_{\theta}\Psi(w)\Big\|_{\infty}\lesssim &\tilde\delta \kappa(\epsilon)R^{-3}+\tau^{-\frac{1}{2}}.
\end{align}

What is left is to prove the desired result \eqref{eq:estM3} by going through the same procedure as in \eqref{eq:integ}-\eqref{eq:M1End} in the previous section. Here we choose to skip the details.


\section{Proof of \eqref{eq:estM4} when $(|k|,l )=(0,2)$}\label{sec:estM402}
The treatment is only slightly different from that in Section \ref{sec:estM3}, except that we need to overcome a minor difficulty: in Section \ref{sec:estM3} we used the norm $\|\langle y\rangle^{-2}\cdot\|_{\infty}$, but here the adopted norm is $\|\langle y\rangle^{-1}\cdot\|_{\infty}$. To overcome this difficulty we have to make some transformation to make a different propagator estimate, specifically Lemma \ref{LM:U4} below, applicable.

We start with decomposing $\chi_{R}w$.
Define two functions $w_{\pm 1}$ as
\begin{align}
w_{\pm 1}(y,\tau):=\frac{1}{2\pi} \int_{0}^{2\pi} e^{\mp i\theta} \partial_{\theta}^2w(y,\theta,\tau) \ d\theta=-\frac{1}{2\pi} \int_{0}^{2\pi} e^{\mp i\theta} w(y,\theta,\tau) \ d\theta.
\end{align}
These two functions can be controlled by the previously estimated $\|\langle y\rangle^{-2}\chi_{R}\partial_{\theta}w(\cdot,\tau)\|_{\infty}$ since
\begin{align}
\|\langle y\rangle^{-1} \chi_{R}w_{\pm 1}\|_{\infty}\leq \|\langle y\rangle^{-1}\chi_{R}\partial_{\theta}w(\cdot,\tau)\|_{\infty}\leq 2 R\|\langle y\rangle^{-2}\chi_{R}\partial_{\theta}w(\cdot,\tau)\|_{\infty}.
\end{align} 

Now we need to estimate the remaining part 
$
\|\langle y\rangle^{-1} \Gamma (\chi_{R} \partial_{\theta}^2 w)\|_{\infty},
$ where $\Gamma$ is an orthogonal projection defined as, for any function $g$ of the form
$
g(y,\theta)=\displaystyle\sum_{n=-\infty}^{\infty} e^{i n\theta }g_{n}(y),
$
\begin{align}
\Gamma (  g):= \sum_{n\not= -1,0,1}  e^{in\theta} g_{n}=g-\frac{1}{2\pi}\sum_{n=-1,0,1}e^{inx}\langle g,\ e^{inx}\rangle_{\theta}.\label{eq:defGamma}
\end{align}

To derive a governing equation for $\Gamma(\partial_{\theta}^2\chi_{R}w)$, we take two $\theta-$derivatives and then apply the operator $\Gamma$ on both sides of \eqref{eq:selfadw},
\begin{align*}
\partial_{\tau} \Gamma(e^{-\frac{1}{8}|y|^2}\partial_{\theta}^2\chi_{R}w)=&-\mathcal{L}\Gamma(e^{-\frac{1}{8}|y|^2}\partial_{\theta}^2\chi_{R}w)+\Gamma\Big(e^{-\frac{1}{8}|y|^2}\partial_{\theta}^2\big[\chi_{R}(F+G+N_1+N_2)+
\Psi(w)\big]\Big).
\end{align*} 
Apply Duhamel's principle to obtain,
\begin{align}
\begin{split}\label{eq:ThetaTwoDeriva}
e^{-\frac{1}{8}|y|^2}\Gamma\big(\chi_{R} \partial_{\theta}^2 w(\cdot,\tau)\big)=&U_3(\tau,\tau_0)e^{-\frac{1}{8}|y|^2}\Gamma \big(\chi_{R} \partial_{\theta}^2 w(\cdot,\tau_0)\big)\\
+\int_{\tau_0}^{\tau} U_3(\tau,\sigma) & e^{-\frac{1}{8}|y|^2}\Gamma \Big(\partial_{\theta}^2\Big[\chi_{R}(F+G+N_1+N_2)+
\Psi(w)\Big](\sigma)\Big)\ d\sigma,
\end{split}
\end{align} where $U_3(\tau,\sigma)$ is the propagator generated by $-\mathcal{L}$ from $\sigma$ to $\tau.$

The propagator generates decay rate:
\begin{lemma}\label{LM:U4}
For any function $g$,
\begin{align}
\|\langle y\rangle^{-1} e^{\frac{1}{8}|y|^2}U_3(\tau,\sigma) \Gamma(g)\|_{\infty}\lesssim e^{-\frac{2}{5}(\tau-\sigma)} \|\langle y\rangle^{-1}e^{\frac{1}{8}|y|^2} g\|_{\infty}.\label{eq:remove3}
\end{align}

\end{lemma}
The lemma will be proved in Appendix \ref{sec:propagator}.

Apply this to \eqref{eq:ThetaTwoDeriva},
\begin{align}
\begin{split}\label{eq:theta2w3}
\Big\|\langle y\rangle^{-1}\Gamma (\chi_{R} \partial_{\theta}^2 w(\cdot,\tau))\Big\|_{\infty}
&\lesssim  e^{-\frac{2}{5}(\tau-\tau_0)}\Big\|\langle y\rangle^{-1}\chi_{R} \partial_{\theta}^2 w(\cdot,\tau_0)\Big\|_{\infty}\\
+\int_{\tau_0}^{\tau} e^{-\frac{2}{5}(\tau-s)}&\ \Big\|\langle y\rangle^{-1}\partial_{\theta}^2\Big[\chi_{R}(F+G+N_1+N_2)+
\Psi(w)\Big](\sigma)\Big\|_{\infty} \ d\sigma.
\end{split}
\end{align} 

Next we estimate the terms on the right hand side. To avoid controlling similar terms in \eqref{eq:TwoYder} below, we will include some estimates for them.
\begin{proposition}\label{prop:twoderiveLinf}
For any $k\in (\mathbb{N}\cup \{0\})^3$ and $l\in \mathbb{N}\cup \{0\}$ satisfying $|k|+l=2,$ we have
\begin{align}
\|\langle y\rangle^{-1}\partial_{\theta}^{l}\nabla_{y}^{k} \chi_{R}(F+G)\|_{\infty}\leq &\tau^{-\frac{1}{2}},\label{eq:2deriv1wFG}\\
\|\langle y\rangle^{-1}\partial_{\theta}^{l}\nabla_{y}^{k} \chi_{R}N_1\|_{\infty}\lesssim &\tilde\delta \kappa(\epsilon)R^{-2}+\tau^{-\frac{1}{2}}+\tilde\delta R^{-2} \big(\mathcal{M}_2+\mathcal{M}_3+\mathcal{M}_4\big),\label{eq:2deriv1wN1}\\
\|\langle y\rangle^{-1}\partial_{\theta}^{l}\nabla_{y}^{k} \chi_{R}N_2\|_{\infty}\lesssim &\tilde\delta R^{-2} \big(\mathcal{M}_1+\mathcal{M}_2+\mathcal{M}_3+\mathcal{M}_4\big), \label{eq:2deriv1wN2}\\
\|\langle y\rangle^{-1}\partial_{\theta}^{2}\Psi(w)\|_{\infty}\lesssim &\tilde\delta \kappa(\epsilon)R^{-2}+\tau^{-\frac{1}{2}}.\label{eq:2deriv1wPsiw}
\end{align}
\end{proposition}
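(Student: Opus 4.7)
The plan is to establish the four estimates in parallel, mirroring the arguments already used for Propositions~\ref{Prop:weight3} and \ref{Prop:2weiDy}. The three differences from those earlier propositions are (i) the derivative order is now $|k|+l=2$; (ii) the pointwise weight is the weaker $\langle y\rangle^{-1}$, allowing one extra factor of $|y|$ to be absorbed on the support of $\chi_R$; and (iii) accordingly the bootstrap quantities $\mathcal{M}_2,\mathcal{M}_3,\mathcal{M}_4$ enter whenever one or two derivatives ultimately fall on $w$, while $\mathcal{M}_1$ enters only when neither does. The key computational tool throughout is the size conversion
$$
\|\langle y\rangle^{-1}\partial_\theta^l\nabla_y^k\chi_R w\|_\infty \,\lesssim\, R^{-2}\,\mathcal{M}_{j(k,l)},
$$
obtained by using the support of $\chi_R w$ to rewrite $\langle y\rangle^{-1}=\langle y\rangle^p\langle y\rangle^{-(1+p)}\lesssim R^p\langle y\rangle^{-(1+p)}$ for a suitable $p\in\{0,1,2\}$ and then invoking the definition of the appropriate $\mathcal{M}_j$; together with the pointwise estimate $|\langle y\rangle^{-1}\nabla_y^m\chi_R|\lesssim\kappa(\epsilon)R^{-m-1}\leq\kappa(\epsilon)R^{-2}$ for $m\geq 1$, valid on the annular support of $\nabla_y^m\chi_R$.

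For \eqref{eq:2deriv1wFG}, I would expand $\partial_\theta^l\nabla_y^k\chi_R(F+G)$ by the Leibniz rule and insert the parameter bounds \eqref{eq:paramePrelim} and \eqref{eq:TauAB}. Since $F,G$ are algebraic in $a,B,\vec\beta_k,\alpha_l$ and their $\tau$-derivatives, every resulting term carries at least one parameter factor of size $\lesssim\tau^{-1/2}$ after one takes into account $|y|\leq(1+\epsilon)R=O(\sqrt{\ln\tau})$; terms where one or two derivatives land on $\chi_R$ are supported in the thin annulus $|y|\in[R,(1+\epsilon)R]$ and decay faster than any power of $R$.

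For \eqref{eq:2deriv1wN1} and \eqref{eq:2deriv1wN2}, I would distribute the two derivatives over the rational expressions \eqref{eq:defFu} and \eqref{eq:defN2eta}, then split $v=V_{a,B}+\eta$ with $\eta=(\text{parameter terms})+w$ and insert $1=\chi_R+(1-\chi_R)$ in front of every $w$-factor. Under \eqref{eq:condition1} each summand contains at least one small factor ($\delta$ from a first derivative of $v$ for $N_1$, or $\tilde\delta$ from an $\eta$-factor for $N_2$). The $(1-\chi_R)w$-pieces, combined with the parameter estimates \eqref{eq:paramePrelim}, produce the $\tau^{-1/2}$ and $\tilde\delta\kappa(\epsilon)R^{-2}$ contributions via \eqref{eq:estW}; the $\chi_R w$-pieces convert to $R^{-2}\mathcal{M}_j$ according to how many derivatives ultimately hit $w$. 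Since $N_1$ has first derivatives of $v$ built in, $w$ always appears with at least one derivative after the splitting, which is why $\mathcal{M}_1$ is absent in \eqref{eq:2deriv1wN1}; since $N_2$ carries an undifferentiated $\eta$, $\mathcal{M}_1$ does appear in \eqref{eq:2deriv1wN2}.

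For \eqref{eq:2deriv1wPsiw}, the key observation is that every summand of $\Psi(w)$ in \eqref{eq:defPsiw} is a $\theta$-independent multiplier times $w$ or $\nabla_y w$, so $\partial_\theta^2\Psi(w)=\Psi(\partial_\theta^2 w)$. I would then repeat the proof of \eqref{eq:est3Psi} with weight $\langle y\rangle^{-1}$ and input $\partial_\theta^2 w$: derivatives of $\chi_R$ yield $\tilde\delta\kappa(\epsilon)R^{-2}$ using $|\partial_\theta^2 w|,|\partial_\theta^2\nabla_y w|\lesssim\tilde\delta$ from \eqref{eq:estW}; the $V_{a,B}^{-2}-\tfrac12$ term yields $\tau^{-1/2}$ via \eqref{eq:paramePrelim}; and the $(1-\tilde\chi_R)$-piece decays faster than any power by \eqref{eq:tails}. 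The main obstacle in executing the plan is bookkeeping—differentiating the rational expressions in $N_1,N_2$ produces many summands, and the subtle point is to verify term-by-term that $\mathcal{M}_1$ cannot be produced by the $N_1$-expansion—but no new idea beyond those already developed in Sections~\ref{sec:estM1}--\ref{sec:estM3} is required.
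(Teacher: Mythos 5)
Your proposal is correct and follows essentially the same route as the paper: bound all but one derivative factor by $\delta$ using \eqref{eq:condition1}, commute $\chi_R$ past the surviving derivative (your $1=\chi_R+(1-\chi_R)$ insertion versus the paper's commutator splitting are equivalent bookkeeping), convert $\langle y\rangle^{-1}\lesssim R^p\langle y\rangle^{-(1+p)}$ on the support of $\chi_R$, and use $\partial_\theta^2\Psi(w)=\Psi(\partial_\theta^2 w)$ for the last estimate. Your explanation for why $\mathcal{M}_1$ is absent from \eqref{eq:2deriv1wN1} matches the paper's key intermediate bound $|\chi_R\nabla_y^k\partial_\theta^l N_1|\lesssim\delta\chi_R\sum_{|m|+n=1,2}|\nabla_y^m\partial_\theta^n v|$, in which only differentiated $v$ survives.
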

The proposition will be proved in Subsection \ref{subsec:twoderiveLinf}.

Returning to \eqref{eq:theta2w3}, we go through the same procedure as in \eqref{eq:integ}-\eqref{eq:M1End} to obtain
\begin{align}
\Big\|\langle y\rangle^{-1}\Gamma (\chi_{R} \partial_{\theta}^2 w)\Big\|_{\infty}\lesssim R^{-2}\Big\{1+\tilde\delta \kappa(\epsilon)+\tilde\delta  \sum_{k=1}^{4} \mathcal{M}_k\Big\}.
\end{align}This will lead to the desired estimate for $\|\langle y\rangle^{-1}\chi_{R} \partial_{\theta}^2 w(\cdot,\tau)\|_{\infty}$:
By the definition of $\Gamma$,
\begin{align}
\begin{split}
\|\langle y\rangle^{-1}\chi_{R} \partial_{\theta}^2 w\|_{\infty}\leq &\|\langle y\rangle^{-1}\Gamma (\chi_{R} \partial_{\theta}^2 w)\|_{\infty}+\frac{1}{\pi} \|\langle y\rangle^{-1}\partial_{\theta}\chi_{R}  w\|_{\infty}\\
\leq &\|\langle y\rangle^{-1}\Gamma (\chi_{R} \partial_{\theta}^2 w)\|_{\infty}+R \|\langle y\rangle^{-2}\partial_{\theta}\chi_{R}  w\|_{\infty}\\
\lesssim & R^{-2}\Big\{1+\tilde\delta \kappa(\epsilon)+\mathcal{M}_3+\tilde\delta  \sum_{k=1}^{4} \mathcal{M}_k\Big\}.
\end{split}
\end{align} 

\subsubsection{Proof of Proposition \ref{prop:twoderiveLinf}}\label{subsec:twoderiveLinf}
\begin{proof}
Since all the techniques have been used and explained in detail in the previous subsections, here we only sketch the proof.

It is easy to prove \eqref{eq:2deriv1wFG} by applying \eqref{eq:paramePrelim} and \eqref{eq:TauAB}.

For \eqref{eq:2deriv1wN1}, compute directly to find that, for $|k|+l=2,$
\begin{align}
\begin{split}
|\chi_{R}\nabla_{y}^{k}\partial_{\theta}^l N_1|\lesssim &\delta \chi_{R}\sum_{|m|+n=1,2}\Big|\nabla_y^m\partial_{\theta}^{n}v\Big|\\
\leq & \delta \sum_{|m|+n=1,2}\Big\{\big|\nabla_y^m\partial_{\theta}^{n}\chi_{R}v\big|+  \big|(\nabla_y^m\partial_{\theta}^{n}\chi_{R}-\chi_{R}\nabla_y^m\partial_{\theta}^{n})v\big|\Big\},
\end{split}
\end{align}where we use \eqref{eq:condition1}; in the second step, change the orders of $\chi_{R}$ and $\nabla_y^m\partial_{\theta}^{n}.$

This, together with the decomposition of $v$ in \eqref{eq:decomVToW} and that $\|\langle y\rangle^{-1}\partial_{\theta}^{l}
\nabla_{y}^{k}\chi_{R}w \|_{\infty}\leq 2 R \|\langle y\rangle^{-2}\partial_{\theta}^{l}
\nabla_{y}^{k}\chi_{R}w \|_{\infty}$, implies the desired result.

The proof of \eqref{eq:2deriv1wN2} is similar, hence is skipped.

For \eqref{eq:2deriv1wPsiw}, the definition of $\Psi$ implies $\partial_{\theta}^2\Psi(w)=\Psi(\partial_{\theta}^2 w).$ What is left is very similar to estimating $\|\langle y\rangle^{-3}\Psi(w)\|_{\infty}$ in \eqref{eq:est3Psi}, hence we omit the details.

\end{proof}

\section{Proof of \eqref{eq:estM4} when $(|k|,l )=(2,0)$}\label{sec:estM420}

We start with estimating $\partial_{y_1}\partial_{y_2}\chi_{R}w$.

Similar to deriving the governing equation for $\nabla_y \chi_{R}w$ in \eqref{eq:ychiw},
\begin{align}
\begin{split}\label{eq:yychiw}
\partial_{\tau}e^{-\frac{1}{8}|y|^2}\partial_{y_1}\partial_{y_2} \chi_{R}w=&-(\mathcal{L}+1)e^{-\frac{1}{8}|y|^2}\partial_{y_1}\partial_{y_2} \chi_{R}w\\
&+e^{-\frac{1}{8}|y|^2}\partial_{y_1}\partial_{y_2} \chi_{R}\Big(F+G+N_1+N_2\Big)+e^{-\frac{1}{8}|y|^2} \Phi(w)
\end{split}
\end{align} where the term $\Phi(w)$ is defined as,
\begin{align*}
\Phi(w):= \partial_{y_1}\partial_{y_2}\Psi(w)+\frac{1}{2}\partial_{y_1}\Big[\Big(\partial_{y_2}\frac{\tilde\chi_{R}\ y\nabla_{y} \chi_{R}  }{\chi_{R}}\Big) \chi_{R}w\Big]+\frac{1}{2}\Big(\partial_{y_2}\frac{\tilde\chi_{R}\ y\nabla_{y} \chi_{R}  }{\chi_{R}}\Big)(\partial_{y_1}\chi_{R}w), 
\end{align*} and $\Psi(w)$ is defined in \eqref{eq:defPsiw}.

The orthogonality conditions imposed on $e^{-\frac{1}{8}|y|^2} \chi_{R}w$ in \eqref{eq:decomVToW} imply that
\begin{align}
e^{-\frac{1}{8}|y|^2}\partial_{y_1}\partial_{y_2} \chi_{R}w\perp e^{-\frac{1}{8}|y|^2}.
\end{align}We denote by $P_1$ the orthogonal projection onto the subspace orthogonal to $e^{-\frac{1}{8}|y|^2}$.

Apply $P_1$ on both sides of \eqref{eq:yychiw} and then use Duhamel's principle to find
\begin{align}
\begin{split}\label{eq:yyDur}
e^{-\frac{1}{8}|y|^2}\partial_{y_1}\partial_{y_2} &\chi_{R}w(\cdot,\tau)=U_4(\tau,\tau_0)e^{-\frac{1}{8}|y|^2}\partial_{y_1}\partial_{y_2} \chi_{R}w(\cdot,\tau_0)\\
&+\int_{\tau_0}^{\tau} U_4(\tau,\sigma) P_1e^{-\frac{1}{8}|y|^2}\Big[\partial_{y_1}\partial_{y_2} \chi_{R}\Big(F+G+N_1+N_2\Big)+ \Phi(w)\Big](\sigma)\ d\sigma,
\end{split}
\end{align}
where $U_4(\tau,\sigma)$ is the propagator generated by the linear operator $-P_1(\mathcal{L}+1)P_1$ from $\sigma$ to $\tau.$

The propagator satisfies the following estimate,
\begin{lemma}\label{LM:U5}
For any function $g$, and times $\tau, \ \sigma$ with $\tau\geq \sigma,$
\begin{align}\label{eq:estU4}
\Big\|\langle y\rangle^{-1}e^{\frac{1}{8}|y|^2}U_4(\tau,\sigma)P_1g\Big \|_{\infty}\lesssim e^{-\frac{2}{5}(\tau-\sigma)} \Big\|\langle y\rangle^{-1}e^{\frac{1}{8}|y|^2}g\Big\|_{\infty}.
\end{align}
\end{lemma}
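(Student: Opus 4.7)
The strategy will mirror the proofs of Lemmas \ref{LM:propagator}, \ref{LM:propagator3}, \ref{LM:propagator4} and \ref{LM:U4} that must appear in the appendix: establish an $L^2$ spectral gap for the generator on the range of the relevant projection, then upgrade to a weighted $L^\infty$ bound using the explicit Mehler kernel of the harmonic oscillator together with Duhamel's formula.

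The first step is to decompose the generator of $U_4$ as $\mathcal{L}+1 = \mathcal{H}+V_R$, where
\[
\mathcal{H} := -\Delta_y+\tfrac{1}{16}|y|^2-\tfrac{3}{4}-\tfrac{1}{2}\partial_\theta^2,\qquad V_R := \Big|\frac{\tilde\chi_R\,y\cdot\nabla_y\chi_R}{\chi_R}\Big|\geq 0.
\]
The operator $\mathcal{H}$ is the three-dimensional harmonic oscillator tensored with $-\tfrac{1}{2}\partial_\theta^2$ on the circle; its spectrum is $\{\tfrac{1}{2}(|m|+n^2):\,m\in(\mathbb{Z}_{\geq 0})^3,\,n\in\mathbb{Z}\}$ with one-dimensional kernel spanned by $e^{-|y|^2/8}$. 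Since $P_1$ projects precisely onto the orthogonal complement of $e^{-|y|^2/8}$ and $V_R\geq 0$, the spectral theorem gives
\[
P_1(\mathcal{H}+V_R)P_1 \geq \tfrac{1}{2}P_1
\]
as self-adjoint operators on $L^2$, and hence the $L^2$-contraction $\|U_4(\tau,\sigma)P_1g\|_2 \leq e^{-(\tau-\sigma)/2}\|g\|_2$.

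To upgrade this to the weighted $L^\infty$ estimate \eqref{eq:estU4}, I will use Duhamel's expansion
\[
U_4(\tau,\sigma) = e^{-(\tau-\sigma)P_1\mathcal{H} P_1}-\int_\sigma^\tau e^{-(\tau-s)P_1\mathcal{H} P_1}\,P_1 V_R P_1\, U_4(s,\sigma)\,ds,
\]
and analyze the free piece with the explicit Mehler kernel of the three-dimensional harmonic oscillator combined with the heat kernel of $-\tfrac{1}{2}\partial_\theta^2$ on the circle. A direct pointwise Gaussian computation shows that the free semigroup maps the $\langle y\rangle^{-1}e^{|y|^2/8}$-weighted $L^\infty$ space into itself, and after projecting out the ground state by $P_1$ it decays like $e^{-(\frac{1}{2}-\eta)t}$ for any $\eta>0$. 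The contribution from $V_R$ is then absorbed by a bootstrap: although the pointwise size of $V_R$ is $\mathcal{O}(R^{1/4})$ by \eqref{eq:NewPoten}, it is supported in the thin annulus $R\leq|y|\leq(1+\epsilon-R^{-1/4})R$, and the Feynman--Kac representation (available precisely because $V_R\geq 0$) dominates $U_4$ pointwise by the free propagator, allowing the $V_R$ integral to be closed at the slightly reduced exponential rate $\tfrac{2}{5}$.

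The main technical difficulty will be keeping the exponent $\tfrac{2}{5}$ uniform in $\tau_0$ and $\epsilon$ while accommodating the singular-looking but non-negative potential $V_R$: the spectral gap $\tfrac{1}{2}$ provides the room and the positivity of $V_R$ prevents any growth of the propagator, but the precise rate emerges only after carefully balancing the loss incurred in passing from $L^2$ to the weighted $L^\infty$ norm against the contribution of the potential on the annulus near $|y|=R$. This is exactly the same balancing that produces the common rate $\tfrac{2}{5}$ in Lemmas \ref{LM:propagator}, \ref{LM:propagator3}, \ref{LM:propagator4} and \ref{LM:U4}, and the argument here is structurally identical.
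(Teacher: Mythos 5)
There is a genuine gap in your Duhamel splitting. You move the \emph{entire} potential $V_R$ to the perturbation side,
\begin{align*}
U_4(\tau,\sigma) = e^{-(\tau-\sigma)P_1\mathcal{H} P_1}-\int_\sigma^\tau e^{-(\tau-s)P_1\mathcal{H} P_1}\,P_1 V_R P_1\, U_4(s,\sigma)\,ds,
\end{align*}
but in the $\langle y\rangle^{-1}e^{|y|^2/8}$--weighted $L^{\infty}$ norm the full $V_R$ is \emph{not} small: on its support it is $\mathcal{O}(R^{1/4})$, and the weight exactly compensates so that $\|\langle y\rangle^{-1}e^{|y|^2/8}V_R h\|_\infty\lesssim R^{1/4}\|\langle y\rangle^{-1}e^{|y|^2/8}h\|_\infty$, with no gain from the annular support. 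A Gronwall closure at rate $\mathcal{O}(R^{1/4})$ therefore produces exponential \emph{growth} $e^{cR^{1/4}(\tau-\sigma)}$ rather than decay. Your appeal to ``Feynman--Kac dominating $U_4$ pointwise by the free propagator'' cannot rescue this: Feynman--Kac gives $|e^{-t(\mathcal{H}+V_R)}g|\le e^{-t\mathcal{H}}|g|$ for the \emph{unprojected} Schr\"odinger semigroup, but $U_4$ is generated by the compression $P_1(\mathcal{H}+V_R)P_1$, and the projection destroys positivity of the kernel, so there is no Feynman--Kac representation for $U_4$ itself. (It also wouldn't help: even the unprojected dominating kernel has the ground state with eigenvalue zero, so domination alone gives no decay.)

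The proof in the paper uses a different Duhamel that circumvents exactly these two issues. Writing $g(\tau):=P_1U_4(\tau,\sigma)P_1g$, using that $P_1$ commutes with $\mathcal{H}$, one obtains $\partial_\tau g=-(\mathcal{H}+V_R)g+(1-P_1)V_R\,g$, so the Duhamel formula has the \emph{full} semigroup $e^{-(\tau-s)(\mathcal{H}+V_R)}$ as the homogeneous propagator and only the rank-one operator $(1-P_1)V_R$ as perturbation. The full semigroup does admit the Feynman--Kac/Mehler representation \eqref{eq:IntKernel} (since $V_R\ge0$), and the decay is extracted by integrating by parts in the Mehler kernel against the orthogonality of $g$ to the ground state, with the slowly varying factor $\langle e^{-V}\rangle$ controlled using $|\nabla_y V_R|\lesssim R^{-1/4}$, as in Theorem \ref{THM:frequencyWise}. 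The perturbation $(1-P_1)V_R g$ \emph{is} exponentially small in the weighted norm --- this is exactly the analogue of \eqref{eq:AdEff} --- because it is a multiple of $e^{-|y|^2/8}$ with coefficient given by an integral of $V_R g$ against a Gaussian; the Gaussian weight together with the support $|y|\ge R$ gives the factor $e^{-R^2/5}$. That exponential smallness is what closes the bootstrap in \eqref{eq:LargeSmall}, and its absence is what makes your version of the iteration diverge. The $L^2$ spectral-gap inequality $P_1(\mathcal{H}+V_R)P_1\ge\tfrac12 P_1$ you state is correct, but it plays no role in the paper's argument, which stays entirely in the weighted $L^\infty$ space.
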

This lemma will be proved in Section \ref{sec:propagator}.

Apply the propagator estimate to \eqref{eq:yyDur} to obtain
\begin{align}
\begin{split}\label{eq:TwoYder}
\Big\|\langle y\rangle^{-1}\partial_{y_1}\partial_{y_2}& \chi_{R}w(\cdot,\tau)\Big\|_{\infty}
\lesssim  e^{-\frac{2}{5}(\tau-\tau_0)}\Big\|\langle y\rangle^{-1}\partial_{y_1}\partial_{y_2} \chi_{R}w(\cdot,\tau_0)\Big\|_{\infty}\\
&+\int_{\tau_0}^{\tau}e^{-\frac{2}{5}(\tau-\sigma)} \Big\|\langle y\rangle^{-1}\Big[\partial_{y_1}\partial_{y_2} \chi_{R}\Big(F+G+N_1+N_2\Big)+ \Phi(w)\Big](\sigma) \Big\|_{\infty} \ d\sigma.
\end{split}
\end{align}
Here all the terms, except $\|\langle y\rangle^{-1} \Phi(w) \|_{\infty} $, were treated in Proposition \ref{prop:twoderiveLinf}. 

To control $\|\langle y\rangle^{-1} \Phi(w) \|_{\infty} $, we use that $\sum_{|k|=1,2}\Big\|\nabla_{y}^{k}\frac{\tilde\chi_{R}\ y\cdot\nabla_{y} \chi_{R}  }{\chi_{R}} \Big\|_{\infty}\leq R^{-\frac{1}{4}}$ in \eqref{eq:deriveSmooth} to obtain
\begin{align}
\begin{split}
\|\langle y\rangle^{-1} \Phi(w) \|_{\infty}\lesssim& \|\langle y\rangle^{-1}\partial_{y_1}\partial_{y_2}\Psi(w)\|_{\infty}
+R^{-\frac{1}{4}} \sum_{|k|=0,1}\|\langle y\rangle^{-1}\nabla_{y}^k\chi_{R}w\|_{\infty}\\
\lesssim &\tilde\delta \kappa(\epsilon) R^{-2}+\tau^{-\frac{1}{2}}+R^2  \|\langle y\rangle^{-3}\chi_{R}w\|_{\infty}+R\|\langle y\rangle^{-2}\nabla_{y}\chi_{R}w\|_{\infty}\\
\leq &\tilde\delta \kappa(\epsilon) R^{-2}+\tau^{-\frac{1}{2}}+R^{-\frac{5}{4}}\big(\mathcal{M}_1+\mathcal{M}_2\big),
\end{split}
\end{align}
where in the second step we used the techniques of proving \eqref{eq:est3Psi} to control the $\Psi-$term.

What is left is to go through the same procedure as in \eqref{eq:integ}-\eqref{eq:M1End} to obtain a satisfactory estimate for $\|\langle y\rangle^{-1}\partial_{y_1}\partial_{y_2}\chi_{R}w(\cdot,\tau)\|_{\infty}:$
\begin{align}
\Big\|\langle y\rangle^{-1}\partial_{y_1}\partial_{y_2}\chi_{R}w(\cdot,\tau)\Big\|_{\infty}\lesssim  R^{-2}\Big\{1+\tilde\delta \kappa(\epsilon)+\tilde\delta  \sum_{k=1}^{4} \mathcal{M}_k\Big\}.
\end{align}
For the other terms in $\nabla_y^k \chi_R w,$ $|k|=2,$ we use the same method to obtain the desired result
\begin{align}
\sum_{|k|=2}\Big\|\langle y\rangle^{-1}\nabla_{y}^k\chi_{R}w(\cdot,\tau)\Big\|_{\infty}\lesssim  R^{-2}\Big\{1+\tilde\delta \kappa(\epsilon)+\tilde\delta  \sum_{k=1}^{4} \mathcal{M}_k\Big\}.
\end{align}


\appendix


\section{Proof of Lemmata \ref{LM:propagator}, \ref{LM:propagator3}, \ref{LM:propagator4}, \ref{LM:U4} and \ref{LM:U5}}\label{sec:propagator}
The present problem is very similar to those in \cite{BrKu} and \cite{DGSW} for the blowup problem of one-dimensional nonlinear heat equation, \cite{GS2008, GaKnSi, GaKn20142} for MCF, and \cite{MultiDHeat} for multidimensional nonlinear heat equations. In those works the linear operator takes the form,
\begin{align}
\mathcal{L}_W=-\Delta+\frac{1}{16}|y|^2-\frac{n}{4}-1-\frac{1}{2}\partial_{\theta}^2+W,
\end{align}where $n$ is the dimension, the multiplier $W$ is defined as
\begin{align*}
W:=\frac{y^{T}B(\tau)y}{2(n-1)+y^{T}B(\tau)y},
\end{align*}and $B(\tau)=\mathcal{O}(\frac{1}{\tau})$ is a positive definite real $n\times n$ matrix.

Here we consider the case $n=3$, and the linear operator takes the form
\begin{align}
\mathcal{L}_V=-\Delta+\frac{1}{16}|y|^2-\frac{3}{4}-1-\frac{1}{2}\partial_{\theta}^2+V,
\end{align} and $V$ is defined as
\begin{align*}
V:=\frac{1}{2}\Big|\frac{\tilde\chi_{R}\ y\cdot \nabla_{y} \chi_{R}  }{\chi_{R}}\Big|.\label{eq:potential}
\end{align*} 

The proofs of the known cases and the present are very similar, since the multipliers $V$ and $W$ are both (favorably) nonnegative, and (favorably) slowly varying in the $y$-variable. 

Before the proof we simplify the problem.

Suppose that $e^{-\frac{1}{8}|y|^2} g $ is orthogonal to the eigenvectors of the linear operator $\mathcal{L}_0:=-\Delta+\frac{1}{16}|y|^2-\frac{3}{4}-\frac{1}{2}\partial_{\theta}^2$ with eigenvalues $0, \ \frac{1}{2},\ 1$:
\begin{itemize}
\item[(1)] for eigenvalue $0$: $e^{-\frac{1}{8} |y|^2}$,
\item[(2)] for eigenvalue $\frac{1}{2}$: $\\ e^{-\frac{1}{8} |y|^2} y_k, \ 
 e^{-\frac{1}{8} |y|^2} cos\theta,\ e^{-\frac{1}{8} |y|^2} sin\theta, \ k=1,2,3,$
\item[(3)] for eigenvalue $1:$ $e^{-\frac{1}{8} |y|^2}(\frac{1}{2}y_k^2-1),\ e^{-\frac{1}{8} |y|^2} y_k cos\theta,\  e^{-\frac{1}{8} |y|^2} y_k sin\theta ,\ e^{-\frac{1}{8} |y|^2} y_m y_n, \ m\not=n, \ \ k, m,n=1,2,3.$
\end{itemize} 

Since $\mathcal{L}_0$, mapping the $L^2-$space into itself, is self-adjoint, the spectral analysis above implies that $\|e^{-\mathcal{L}_0 \tau}e^{-\frac{1}{8}|y|^2}g\|_2\leq e^{-\frac{3}{2}\tau}\|e^{-\frac{1}{8}|y|^2}g\|_2$. The present situation is different since we want a decay rate for the propagator generated by the time dependent $-\mathcal{L}_V$ in weighted $L^{\infty}-$norms.

To derive a convenient form we Fourier-expand the function $g$ in the $\theta-$variable to find
\begin{align}
 g(y,\theta)=\sum_{n=-\infty}^{\infty}  e^{in \theta}g_{n}(y)
\end{align} with the functions $g_{n}$ defined as
\begin{align}
g_{n}(y):=\frac{1}{2\pi} \Big\langle g(y,\cdot),\ e^{in\theta}\Big\rangle_{\theta}.\label{eq:defGn}
\end{align}

The orthogonality conditions imposed on $g$ imply that $e^{-\frac{1}{8}|y|^2}g_k,\ k=-1,0,1$, enjoy certain orthogonality conditions. Specifically $e^{-\frac{1}{8}|y|^2}g_0$ is orthogonal to the following $10$ eigenvectors of the operator $-\Delta+\frac{1}{16}|y|^2-\frac{3}{4}$,
\begin{align}
\begin{split}\label{eq:13Fun}
 e^{-\frac{1}{8}|y|^2},\ & y_{k}e^{-\frac{1}{8}|y|^2}, \ e^{-\frac{1}{8} |y|^2}(\frac{1}{2}y_k^2-1),\ 
e^{-\frac{1}{8} |y|^2} y_m y_n, \ m\not=n, \ \ k, m,n=1,2,3,
\end{split}
\end{align}
and $e^{-\frac{1}{8}|y|^2}g_{k}$, $k=\pm 1$, are orthogonal to the following eigenvectors,
\begin{align}
 e^{-\frac{1}{8}|y|^2},\ y_{k}e^{-\frac{1}{8}|y|^2},\ k=1,2,3.\label{eq:4Fun}
\end{align}

Accordingly, we define $\tilde{P}_{10}$ and $\tilde{P}_4$ to be the orthogonal projections onto the subspace orthogonal to these 10 and 4 functions listed in \eqref{eq:13Fun} and \eqref{eq:4Fun} respectively. Thus
\begin{align*}
\tilde{P}_{10}e^{-\frac{1}{8}|y|^2}g_{0}=&e^{-\frac{1}{8}|y|^2}g_{0},\\
\tilde{P}_{4}e^{-\frac{1}{8}|y|^2}g_{\pm 1}=&e^{-\frac{1}{8}|y|^2}g_{\pm 1}.
\end{align*}

Consequently, the propagator $U_1(\tau,\sigma)$ in Lemma \ref{LM:propagator} takes a new form,
\begin{align}
\begin{split}\label{eq:propDecom}
U_{1}(\tau,\sigma)P_{18} g=& e^{(\tau-\sigma)}\tilde{U}_1(\tau,\sigma)\tilde{P}_{10}g_0(y)+e^{\frac{1}{2}(\tau-\sigma)}\sum_{n=\pm 1}e^{in\theta} \tilde{U}_{2}(\tau,\sigma) \tilde{P}_4 g_n(y)\\
&+\sum_{|n|\geq 2}  e^{in \theta}e^{-\frac{n^2-2}{2}(\tau-\sigma)}\tilde{U}_3(\tau,\sigma)g_n(y),
\end{split}
\end{align}where $\tilde{U}_1(\tau,\sigma)$ is generated by the linear operator $-\tilde{P}_{10}\mathcal{L}_1 \tilde{P}_{10}$, and $\tilde{U}_{2}(\tau,\sigma)$ is generated by $-\tilde{P}_{4}\mathcal{L}_1 \tilde{P}_{4},$ and $\tilde{U}_3$ is generated by $-\mathcal{L}_1$,  which is defined as
\begin{align}
\mathcal{L}_1 :=&-\Delta+\frac{1}{16}|y|^2-\frac{3}{4}+V.
\end{align} Similarly, the propagators $U_k$, $k=1,2,3,4,$ in Lemmas \ref{LM:propagator4}, \ref{LM:propagator3}, \ref{LM:U4} and \ref{LM:U5}
become
\begin{align}
\begin{split}
U_1(\tau,\sigma)P_{18} \partial_{\theta}g=&e^{\frac{1}{2}(\tau-\sigma)}\sum_{n=\pm 1}ine^{in\theta} \tilde{U}_{2}(\tau,\sigma) \tilde{P}_4 g_n(y)\\
&+\sum_{|n|\geq 2} i n  e^{in \theta}e^{-\frac{n^2-2}{2}(\tau-\sigma)}\tilde{U}_3(\tau,\sigma)g_n(y),\\
U_2(\tau,\sigma) P_6 g =&e^{\frac{1}{2}(\tau-\sigma)} \tilde{U}_2(\tau,\sigma) \tilde{P}_4 g_0+\sum_{n=\pm 1}e^{in\theta} \tilde{U}_{4}(\tau,\sigma) \tilde{P}_1 g_n(y)\nonumber\\
&\hspace{1 cm}+\sum_{|n|\geq 2}  e^{in \theta}e^{-\frac{n^2-1}{2}(\tau-\sigma)}\tilde{U}_3(\tau,\sigma)g_n(y),\\
U_3(\tau,\sigma)\Gamma(g)=&\sum_{|n|\geq 2}  e^{in \theta}e^{-\frac{n^2-2}{2}(\tau-\sigma)}\tilde{U}_3(\tau,\sigma)g_n(y),\\
U_4(\tau,\sigma) P_1g=&\tilde{U}_4(\tau,\sigma)\tilde{P}_1 g_0+\sum_{n\not= 0}  e^{in \theta}e^{-\frac{n^2}{2}(\tau-\sigma)}\tilde{U}_3(\tau,\sigma)g_n(y)
\end{split}
\end{align}where, the projection $\Gamma$ is defined in \eqref{eq:defGamma}, all the other operators, except $\tilde{U}_4$ and $\tilde{P}_1$, have been defined in \eqref{eq:propDecom}, and $\tilde{P}_1$ is the orthogonal projection into the subspace orthogonal to $e^{-\frac{1}{8}|y|^2}$and $\tilde{U}_4(\tau,\sigma)$ is the propagator generated by the linear operator $\tilde{P}_1\mathcal{L}_1\tilde{P}_1.$

These propagators satisfies the following estimates:
\begin{theorem}\label{THM:frequencyWise}
There exists a constant $C$ such that, for any function $g: \ \mathbb{R}^3\rightarrow \mathbb{C}$ and $\tau\geq \sigma,$
\begin{align}
\|\langle y\rangle^{-3}e^{\frac{1}{8}|y|^2}\tilde{U}_1(\tau,\sigma)\tilde{P}_{10} g\|_{\infty}\leq &C e^{-\frac{7}{5}(\tau-\sigma)}\|\langle y\rangle^{-3}e^{\frac{1}{8}|y|^2} g\|_{\infty},\label{eq:13Ortho}\\
\|\langle y\rangle^{-k}e^{\frac{1}{8}|y|^2}\tilde{U}_2(\tau,\sigma)\tilde{P}_{4} g\|_{\infty}\leq &C e^{-\frac{9}{10}(\tau-\sigma)}\|\langle y\rangle^{-k}e^{\frac{1}{8}|y|^2} g\|_{\infty},\ k=2,3, \label{eq:4Ortho}\\
\|\langle y\rangle^{-1}e^{\frac{1}{8}|y|^2}\tilde{U}_4(\tau,\sigma)\tilde{P}_{1} g\|_{\infty}\leq &Ce^{-\frac{2}{5}(\tau-\sigma)}\|\langle y\rangle^{-1}e^{\frac{1}{8}|y|^2} g\|_{\infty},\label{eq:1Ortho}\\
\|\langle y\rangle^{-k}e^{\frac{1}{8}|y|^2}\tilde{U}_3(\tau,\sigma) g\|_{\infty}\leq &C \|\langle y\rangle^{-k}e^{\frac{1}{8}|y|^2} g\|_{\infty},\ k=0,1,2,3.\label{eq:noOrtho}
\end{align}

\end{theorem}
The theorem will be proved shortly. 

Assuming Theorem \ref{THM:frequencyWise} holds, we are ready to prove the desired results. These estimates, \eqref{eq:propDecom} and that $\sum_{n\geq 2}e^{-\frac{n^2-2}{2}s}\leq e^{-\frac{s}{2}}$ when $s\geq 1$ imply Lemma \ref{LM:propagator} when  $\tau-\sigma\geq 1$. When $\tau-\sigma<1$, we prove the desired result by applying the maximum principle. This will require a different formulation and it is easy, hence we choose to skip the proof. The proofs of the other lemmas are easier since we only need some of \eqref{eq:13Ortho}-\eqref{eq:noOrtho}, thus are skipped.

In the rest of the section we prove Theorem \ref{THM:frequencyWise}.
\begin{proof}
We will only prove \eqref{eq:1Ortho} and \eqref{eq:noOrtho}, and will skip the proof of the others since the key ideas are the same. For more details we refer to \cite{MultiDHeat}.

We start with proving \eqref{eq:noOrtho} since the techniques will be used in proving \eqref{eq:1Ortho}.

The crucial step is to derive an integral kernel. By the technique of path integral technique, see \cite{BrKu, DGSW}, we find that, for any function $g$,
\begin{align}
\tilde{U}_{3}(\tau,\sigma)g(y)=\int_{\mathbb{R}^3}K_{\tau-\sigma}(y,z) \langle e^{-V}\rangle(y,z)e^{-\frac{1}{8}|z|^2} g(z)\ dz.\label{eq:IntKernel}
\end{align} where the function $\langle e^{-V}\rangle(y,z)$ is defined in terms of path integral,
\begin{align*}
\langle e^{-V}\rangle(y,z):=\int e^{-\int_{\sigma}^{\tau} V(\omega_0(s)+\omega(s),s)\ ds} d\mu(\omega),
\end{align*} and $K_{\tau-\sigma}(y,z)e^{-\frac{1}{8}|z|^2}$ is the integral kernel of $e^{-(\tau-\sigma)\mathcal{L}_0}$, given by Mehler's formula,
\begin{align}\label{eq:meh}
K_{\tau-\sigma}(y,z):=(2\sqrt{2}\pi )^3 (1-e^{-(\tau-\sigma)})^{-\frac{3}{2}}e^{\frac{1}{8}|y|^2 }e^{-\frac{|y-e^{-\frac{\tau-\sigma}{2}}z|^2}{4(1-e^{-(\tau-\sigma)})}},
\end{align}
and $d\mu(\omega)$ is a probability measure on the continuous paths $\omega:[\sigma,\tau]\rightarrow \mathbb{R}^{3}$ with $\omega(\sigma)=\omega(\tau)=0$, and $\omega_0(s)$ is a path, with $\omega_0(\sigma)=z$ and $\omega_0(\tau)=y,$ defined as
\begin{align*}
\omega_0(s)=e^{\frac{1}{2}(\tau-s)} \frac{e^{\sigma}-e^{s}}{e^{\sigma}-e^{\tau}}y+e^{\frac{1}{2}(\sigma-s)} \frac{e^{\tau}-e^{s}}{e^{\tau}-e^{\sigma}}z.
\end{align*} Here $\mathcal{L}_0$ is a linear operator defined as
\begin{align}
\mathcal{L}_0:=-\Delta+\frac{1}{16}|y|^2-\frac{3}{4}.
\end{align}
Since $V$ is nonnegative, $0\leq \langle e^{-V}\rangle\leq 1$, and thus
\begin{align}
|\tilde{U}_{3}(\tau,\sigma)g|\leq e^{-(\tau-\sigma)\mathcal{L}_0}|g|.
\end{align} This implies the desired \eqref{eq:noOrtho}, see e.g. \cite{DGSW}, for $k=0,1,2,3,$
\begin{align}
\Big\|\langle y\rangle^{-k} e^{\frac{1}{8}|y|^2}\tilde{U}_{3}(\tau,\sigma)g\Big\|_{\infty}\leq \Big\|\langle y\rangle^{-k} e^{\frac{1}{8}|y|^2}e^{-(\tau-\sigma)\mathcal{L}_0}g\Big\|_{\infty} \lesssim \Big\|\langle y\rangle^{-k} e^{\frac{1}{8}|y|^2}g\Big\|_{\infty}.\label{eq:fi123}
\end{align}

We remark that this can be proved directly by applying the maximum principle, but since the representation above will be needed in what follows, we do not want to have a separate and long formulation for the sole purpose of proving \eqref{eq:noOrtho}.

Next we sketch a proof for \eqref{eq:1Ortho}, where a decay rate is wanted. 

To cast the problem into a convenient form, we define a new function $g$ as
$$g(y,\tau):=\tilde{U}_{4}(\tau,\sigma)\tilde{P}_{1} g=\tilde{P}_{1} \tilde{U}_{4}(\tau,\sigma)\tilde{P}_{1} g,$$ then $g(y,\tau)=\tilde{P}_{1} g(y,\tau)$ is the solution to the equation
\begin{align}
\begin{split}\label{eq:linearEvo}
\partial_{\tau}g(y,\tau)=&-\mathcal{L}_1 g(y,\tau)=-(\mathcal{L}_0+V) g(y,\tau)+(1-\tilde{P}_{1})V g(y,\tau).
\end{split}
\end{align}

Apply Duhamel's principle to obtain
\begin{align}
g(y,\tau)=\tilde{U}_3(\tau,\sigma) \tilde{P}_{1}g(y,\sigma)+\int_{\sigma}^{\tau} \tilde{U}_3(\tau,s) (1-\tilde{P}_{1})V g(y,s)\ ds,\label{eq:durhamelFin}
\end{align} where $\tilde{U}_3(\tau,s)$ is the propagator in \eqref{eq:noOrtho}.

We claim that, for some constant $C$ independent of $\tau$, $\sigma$ and $g$,
\begin{align}
\Big\|\langle y\rangle^{-1} e^{\frac{1}{8}|y|^2}\tilde{U}_{3}(\tau,\sigma)\tilde{P}_{1}g(\cdot,\sigma)\Big\|_{\infty}\leq  C \Big[e^{-\frac{1}{2}(\tau-\sigma)}+& R^{-\frac{1}{4}}(\sigma)\Big]  \Big\|\langle y\rangle^{-1} e^{\frac{1}{8}|y|^2}g(\cdot,\sigma)\Big\|_{\infty}, \label{eq:firsInt}\\
\Big\|\langle y\rangle^{-1} e^{\frac{1}{8}|y|^2}\int_{\sigma}^{\tau}\tilde{U}_3(\tau,s) (1-\tilde{P}_{1})V g(\cdot,s) ds\Big\|_{\infty} \leq & C\int_{\sigma}^{\tau} e^{-\frac{1}{5}R^2(s)}\ \Big\|\langle y\rangle^{-1}e^{\frac{1}{8}|y|^2}g(\cdot,s)\Big\|_{\infty} ds. \label{eq:secoInt}
\end{align}

These two claims will be proved in the subsection below. 

Suppose the claims hold, then they and \eqref{eq:durhamelFin} imply that
\begin{align}
\begin{split}\label{eq:tauSigma}
\Big\|\langle y\rangle^{-1} e^{\frac{1}{8}|y|^2} g(\cdot,\tau)\Big\|_{\infty}\leq C\Big\{ \Big[& e^{-\frac{1}{2}(\tau-\sigma)}+R^{-\frac{1}{4}}(\sigma)\Big]  \Big\|\langle y\rangle^{-1} e^{\frac{1}{8}|y|^2}g(\cdot,\sigma)\Big\|_{\infty}\\
&+\int_{\sigma}^{\tau} e^{-\frac{1}{5}R^2(s)}\ \Big\|\langle y\rangle^{-1}e^{\frac{1}{8}|y|^2}g(\cdot,s)\Big\|_{\infty}\ ds\Big\}.
\end{split}
\end{align}
This estimate is crucial, but does not directly imply the desired \eqref{eq:1Ortho}: for any fixed $\sigma$ we can not prove that $\|\langle y\rangle^{-1} e^{\frac{1}{8}|y|^2} g(\cdot,\tau)\|_{\infty}$ decay exponentially fast as $\tau\rightarrow \infty.$ However this is a necessary preparation since it implies that, provided that $R(\sigma)$ is sufficiently large,
\begin{align}
\Big\|\langle y\rangle^{-1} e^{\frac{1}{8}|y|^2} g(\cdot,\tau)\Big\|_{\infty}\leq 2C e^{-\frac{3}{2}(\tau-\sigma)} \Big\|\langle y\rangle^{-1} e^{\frac{1}{8}|y|^2} g(\cdot,\sigma)\Big\|_{\infty},\  \text{when}\ e^{\frac{3}{2}(\tau-\sigma)}\leq R^{\frac{1}{4}}(\sigma) .\label{eq:subIn}
\end{align}

To prove the desired \eqref{eq:1Ortho} we have to remove the condition $ e^{\frac{3}{2}(\tau-\sigma)}\leq R^{\frac{1}{4}}(\sigma)$ above.
This turns out to be easy: we divide the interval $[\sigma,\ \tau]$ into finitely many subintervals, each of them is of length $R(\sigma)$ except the last one, and apply \eqref{eq:subIn} on each interval, and then finally prove the desired \eqref{eq:1Ortho} by putting them together.

To complete the proof we need to prove the claims \eqref{eq:firsInt} and \eqref{eq:secoInt}, this will be achieved in the following subsection.

\subsection{Proof of  \eqref{eq:firsInt} and \eqref{eq:secoInt}}
To prove \eqref{eq:secoInt} we apply \eqref{eq:noOrtho} to find
\begin{align}
\Big\|\langle y\rangle^{-1} e^{\frac{1}{8}|y|^2}\int_{\sigma}^{\tau}\tilde{U}_3(\tau,s) (1-\tilde{P}_{1})V g(\cdot,s)\ ds\Big\|_{\infty}\leq & C \int_{\sigma}^{\tau} \Big\|\langle y\rangle^{-1}e^{\frac{1}{8}|y|^2}(1-\tilde{P}_{1})Vg(\cdot,s)\Big\|_{\infty}\ ds\nonumber\\
=&C \int_{\sigma}^{\tau} \|\langle y\rangle^{-1} \|_{\infty} \Big|\Big\langle Vg, \ e^{-\frac{1}{8}|y|^2}\Big\rangle\Big|\ ds\nonumber\\
\leq & C \int_{\sigma}^{\tau}  \Big|\Big\langle Vg, \ e^{-\frac{1}{8}|y|^2}\Big\rangle\Big|\ ds.\label{eq:firsVunb}
\end{align}Here we only have a minor difficulty, caused by that $V$ is not uniformly bounded.  
Since the function $V$ grows modestly as $|V(y,\tau)|\leq c(\epsilon) R^{\frac{1}{4}}(\tau)$ by \eqref{eq:NewPoten}, and is supported by the set $|y|\geq R,$ and since the function $e^{-\frac{1}{4}|y|^2}$ decays rapidly, it is easy to obtain
\begin{align}
\Big|\Big\langle Vg, \ e^{-\frac{1}{8}|y|^2}\Big\rangle\Big|\lesssim e^{-\frac{1}{4}R^2}\|\langle y\rangle^{-1}e^{\frac{1}{8}|y|^2}g\|_{\infty}.
\end{align}This together with \eqref{eq:firsVunb} implies the desired \eqref{eq:secoInt}.

In what follows we sketch a proof for \eqref{eq:firsInt}. 

When $\tau-\sigma\in [0,1]$ it is implied by \eqref{eq:noOrtho}.

Next we consider the regime $\tau-\sigma\geq 1.$ We start with considering the easiest case, specifically estimating $e^{\frac{1}{8}|y|^2}\widetilde{U}_{3}(\tau,\sigma)g$ with $g$ satisfying the conditions
\begin{align}
\int_{-\infty}^{\infty} e^{-\frac{1}{8}|y|^2} g(y)\ dy_k=0,\ k=1,2,3.\label{eq:3intZero}
\end{align}Without loss of generality, we assume that 
\begin{align}\label{eq:assumOrtho}
\Big\|(1+|y_1|)^{-1} e^{\frac{1}{8}|y|^2}g\Big\|_{\infty}=\min_{k=1,2,3}\Big\{\Big\|(1+|y_k|)^{-1} e^{\frac{1}{8}|y|^2}g\Big\|_{\infty}\Big\}.
\end{align}
Returning to the identity \eqref{eq:IntKernel}, we integrate by parts in $z_1$ to find
\begin{align}
\widetilde{U}_{3}(\tau,\sigma)g(y)=\int_{\mathbb{R}^3}\partial_{z_1}\Big[K_{\tau-\sigma}(y,z) \langle e^{-V}\rangle(y,z)\Big] \int_{-\infty}^{z_1}e^{-\frac{1}{8}(v_1^2+z_2^2+z_3^2)} g(v_1,z_2,z_3)\ dv_1 d^3 z.\label{eq:integz1}
\end{align}
The $z_1-$derivative generates a decay estimate: when $\tau-\sigma\geq 1$, $$
\Big|\partial_{z_1}[K_{\tau-\sigma}(y,z) \langle e^{-V}\rangle(y,z)]\Big|\lesssim  \Big[ e^{-\frac{1}{2}(\tau-\sigma)}+R^{-\frac{1}{4}}(\sigma)\Big] (1+|y_1|+|z_1|)K_{\tau-\sigma}(y,z).
$$ For the second factor of the integrand in (\ref{eq:integz1}), \eqref{eq:3intZero} implies that $$\int_{-\infty}^{z_1}e^{-\frac{1}{8}(v_1^2+z_2^2+z_3^2)} g(v_1,z_2,z_3) dv_1=-\int_{z_1}^{\infty} e^{-\frac{1}{8}(v_1^2+z_2^2+z_3^2)} g(v_1,z_2,z_3) dv_1, $$
and by l'hopital's rule we obtain, for any $z_1,$
\begin{align}
\begin{split}
\Big|\int_{-\infty}^{z_1}e^{-\frac{1}{8}(v_1^2+z_2^2+z_3^2)} g(v_1,z_2,z_3) dv_1\Big|&\lesssim e^{-\frac{1}{4}|z|^2}\Big \|(1+|y_1|)^{-1}e^{\frac{1}{8}|y|^2} g\Big\|
_{\infty}\\
&\lesssim e^{-\frac{1}{4}|z|^2} \Big\|\langle y\rangle^{-1}e^{\frac{1}{8}|y|^2} g\Big\|_{\infty},
\end{split}
\end{align} where in the last step we use \eqref{eq:assumOrtho}.

Collect the estimates to find, when $\tau-\sigma\geq 1$
\begin{align}
\begin{split}
&|\widetilde{U}_{3}(\tau,\sigma)g|\\
&\lesssim \Big[ e^{-\frac{1}{2}(\tau-\sigma)}+R^{-\frac{1}{4}}(\sigma)\Big] \int_{\mathbb{R}^3} K_{\tau-\sigma}(y,z)(1+|y_1|+|z_1|) e^{-\frac{1}{4}|z|^2} \ d^3z\ \Big\|\langle y\rangle^{-1}e^{\frac{1}{8}|y|^2} g\Big\|_{\infty}.
\end{split}
\end{align} Recall that $K_{\tau-\sigma}(y,z)e^{-\frac{1}{8}|z|^2}$ is the integral kernel of $e^{-(\tau-\sigma)\mathcal{L}_0}$. Apply \eqref{eq:fi123} to obtain the desired estimate
\begin{align}
\|\langle y\rangle^{-1}e^{\frac{1}{8}|y|^2}\widetilde{U}_{3}(\tau,\sigma)g\|_{\infty}\lesssim \Big[ e^{-\frac{1}{2}(\tau-\sigma)}+R^{-\frac{1}{4}}(\sigma)\Big] \|\langle y\rangle^{-1}e^{\frac{1}{8}|y|^2} g\|_{\infty}.
\end{align}

Now we turn to the general case, i.e. controlling $\tilde{U}_{3}(\tau,\sigma)g$ with $g$ satisfying $$\int_{\mathbb{R}^3}e^{-\frac{1}{8}|y|}g(y)\ d^3 y=0.$$
The strategy is to transform it to the easiest case. 

Define orthogonal projections $\Omega_{k,1}$ and $\Omega_{k,2}$, $k=1,2,3,$ as $$\Omega_{k,1}f(y):=\frac{1}{\int_{-\infty}^{\infty}e^{-\frac{1}{4}y_k^2} dy_k}e^{-\frac{1}{8}y_k^2}\int_{-\infty}^{\infty}e^{-\frac{1}{8}y_k^2}f(y) dy_k,$$ and$$\Omega_{k,2}:=1-\Omega_{k,1}.$$Decompose the function $g$ into two parts
\begin{align}
g(y)=\sum_{l_k=1,2} \prod_{k} \Omega_{k,l_k}g(y).
\end{align}Obviously $\prod_k\Omega_{k,1}g=0$ by the orthogonal condition imposed on $g$. $\prod_k\Omega_{k,2}g$ satisfies the condition \eqref{eq:3intZero}, thus can be controlled the technique for the easiest case. Controlling the other terms is even easier since, for example, $\Omega_{11}\Omega_{22}\Omega_{32}g(y)=e^{-\frac{1}{8}y^2_1}\tilde{g}(y_2,y_3)$ for some function $\tilde{g}$ satisfying 
$$\int_{-\infty}^{\infty}e^{-\frac{1}{8}y_2^2} \tilde{g}(y_2,y_3)\ dy_2=\int_{-\infty}^{\infty}e^{-\frac{1}{8}y_3^2} \tilde{g}(y_2,y_3)\ dy_3=0.$$We skip these parts. For more details see \cite{MultiDHeat}.

\end{proof}

\section{Proof of Lemma \ref{LM:ColdMini}}\label{sec:LMColdMini}
We start with recalling some definitions from \cite{ColdingMiniUniqueness}.

The shrinker scale $R_s:[0,\infty)\rightarrow \mathbb{R}^{+}$ is a scalar function defined by the identity
\begin{align}
e^{-\frac{1}{2}R_s^2(\tau)}=F(\Sigma_{\tau-1})-F(\Sigma_{\tau+1})=\int_{\tau-1}^{\tau+1} |\nabla_{\Sigma_s}F|^2 \ ds,\label{eq:defShr}
\end{align}
where the function $F(\Sigma_{\tau})$, for the rescaled surface $\Sigma_{\tau}$ at time $\tau,$ is defined as
\begin{align}
F(\Sigma_{\tau}):=(4\pi)^{-2}\int_{\Sigma_{\tau}} e^{-\frac{|x|^2}{4}}\ d\mu,
\end{align} and 
$|\nabla_{\Sigma_\tau}F|^2$ is defined as, with $\phi:=H-\frac{\langle x, \ \bf{n}\rangle}{2},$ 
\begin{align}
|\nabla_{\Sigma_\tau}F|^2:=\int_{\Sigma_{\tau}} \phi^2 e^{-\frac{|x|^2}{4}}\ d\mu.\label{eq:defNaF}
\end{align} 

It was proved that, in the coordinate constructed for time $\tau$, inside the ball $B_{R_{s}}(0)$, the rescaled MCF takes the form, up to some tilts of axis,
\begin{align}
\Sigma_{\tau}=\left[
\begin{array}{c}
y\\
v(y,\theta,\tau) cos\theta\\
v(y,\theta,\tau) sin\theta
\end{array}
\right].
\end{align}

Now we study this part of MCF: $\Sigma_{\tau}\cap B_{R_{s}}(0)$, or equivalently the part satisfying $|y|^2+|v|^2\leq R_{s}^2$. Here the graph function $v$ satisfies the estimates: there exists a small $\kappa>0$ and a large integer $l$ such that, for some $C_{l}>0,$
\begin{align}
\Big|v-\sqrt{2}\Big|_{C^{2,\alpha}}\leq \kappa,\ \text{and}\ \sum_{|k|+m\leq l}\Big|\nabla_{y}^{k}\partial_{\theta}^{m}v\Big|\leq C_{l}.\label{eq:cylinder}
\end{align}
Moreover, by choosing $\epsilon$ in (6.10) and (6.11) of \cite{ColdingMiniUniqueness} to be $\frac{1}{3-\epsilon_0}$ with $\epsilon_0$ be an arbitrarily small positive constant, which is allowed by Theorem 6.1 of \cite{ColdingMiniUniqueness}, and by applying Lemma 6.9 of \cite{ColdingMiniUniqueness}, we have that there exists a $C_{\epsilon_0}$ such that if $l$ in \eqref{eq:cylinder} is large enough, then
\begin{align}
F(\Sigma_{\tau-1})-F(\Sigma_{\tau+1})\leq F(\Sigma_{\tau-1})-F(\mathcal{C}) \leq C_{\epsilon_0} \tau^{-3+\epsilon_0}.\label{eq:ODE}
\end{align} 
Here $F(\mathcal{C})$ is a constant defined as $F(\mathcal{C})=\lim_{\tau\rightarrow \infty} F(\Sigma_{\tau})$ and recall that $F(\Sigma_{\tau})$ is monotonically decreasing.

\eqref{eq:ODE} and \eqref{eq:defShr} imply that the shrinker scale $R_s(\tau)$ satisfies the estimate
\begin{align}
e^{\frac{1}{2}R_{s}^2(\tau)}\geq C_{\epsilon_0} \tau^{3-\epsilon_0},\ \text{equivalently}, \ R_s\geq \sqrt{2} \big[\ln C_{\epsilon_0}+(3-\epsilon_0) \ln \tau\big]^{\frac{1}{2}}.\label{eq:CloseSharp}
\end{align}

They also provided an estimate for $\|\phi\|_{L^1(B_{R_s})}$, defined as
\begin{align}
\|\phi\|_{L^1(B_{R_s})}:=\int_{\Sigma_{\tau}\cap B_{R_s}} |\phi| e^{-\frac{|x|^2}{4}}\ d\mu.
\end{align}

By the estimate (5.33) in \cite{ColdingMiniUniqueness}, with $\beta=0.5$, there exists a constant $C_1>0$ such that
\begin{align}
|\nabla_{\Sigma_\tau}F|^2_{B_{R_s}}:=\int_{\Sigma_{\tau}\cap B_{R_s}} \phi^2 e^{-\frac{|x|^2}{4}}\ d\mu \leq C_1 \Big[F(\Sigma_{\tau-1})-F(\Sigma_{\tau+1})\Big].
\end{align}
Apply Schwartz inequality and
\eqref{eq:ODE} to find, for some $C_2,\ C_3>0,$
\begin{align}
\|\phi\|_{L^1(B_{R_s})} \leq C_2  \Big[F(\Sigma_{\tau-1})-F(\Sigma_{\tau+1})\Big]^{\frac{1}{2}}\leq C_3 \tau^{-\frac{3-\epsilon_0}{2}}.\label{eq:L1normPhi}
\end{align}

Next we derive pointwise estimates for $v$ and its derivatives. 
To facilitate later discussions, we define a scalar function $R$ as
\begin{align}
R:=\min\{R_{s},\ \sqrt{6}\sqrt{\ln \tau}\}.\label{eq:defRRR}
\end{align}

By (2.51) of \cite{ColdingMiniUniqueness} with $n=4,\ k=1$, in the region $|y|^2+|v|^2\leq r^2-3$ which is equivalent to $\Sigma_{\tau}\cap B_{\sqrt{r^2-3}}(0)$, we have that if $r\leq R-2$, then
\begin{align}
\big|v(y,\theta,\tau)-\sqrt{2}\big|+ \sum_{|m|+j=1}\big|\nabla_y^{m}\partial_{\theta}^{j} v(y,\theta,\tau)\big|\leq C_{\lambda_0, l, C_{l}} R^{13} \Big[ e^{-d_{l,4}\frac{(R-1)^2}{8}}+\|\phi\|^{\frac{d_{l,4}}{2}}_{L^1(B_{R})}\Big] e^{\frac{1}{8}r^2}.
\end{align} here $d_{l,4}$ is a constant satisfying $d_{l,4}\leq 1$ and $d_{l,4}\rightarrow 1$ as $l\rightarrow \infty$, with $l$ from \eqref{eq:cylinder}, where, recall the definition of $\lambda_0$ in \eqref{eq:generic}. 
By fixing $|y|^2+|v|^2= r^2-3$ with $r\leq R-2$, we find,
\begin{align*}
|v(y,\theta,\tau)-\sqrt{2}|+ \sum_{|m|+j=1}|\nabla_y^{m}\partial_{\theta}^{j} v(y,\theta,\tau)|
\leq C_{\lambda_0, l, C_{l}} R^{13} \Big[ e^{-d_{l,4}\frac{(R-1)^2}{8}}+\|\phi\|^{\frac{d_{l,4}}{2}}_{L^1(B_{R})}\Big] e^{\frac{1}{8}(|y|^2+|v|^2+3)}.
\end{align*}
The facts $|v-\sqrt{2}|$ is small in \eqref{eq:cylinder} and $R\gg 1$ imply that, for $|y|\leq R-3$, there exists a new constant $\tilde{C}_{\lambda_0, l, C_{l}} $ such that
\begin{align}
\Big|v(y,\theta,\tau)-\sqrt{2}\Big|+ \sum_{|m|+j=1}\Big|\nabla_y^{m}\partial_{\theta}^{j} v(y,\theta,\tau)\Big|
\leq \tilde{C}_{\lambda_0, l, C_{l}} R^{13} \Big[ e^{-d_{l,4}\frac{(R-1)^2}{8}}+\|\phi\|^{\frac{d_{l,4}}{2}}_{L^1(B_{R})}\Big] e^{\frac{1}{8}|y|^2}.
\end{align} 

We have one more difficulty: the estimates above hold only in the coordinate constructed for time $\tau,$ but we need pointwise estimates in the limit coordinate.

It was proved in \cite{ColdingMiniUniqueness} that the sequence of the chosen coordinates converge. In what follows we adapt their arguments to show that the convergence rate is $\tau^{-\frac{3}{4}-\sigma},$ for some $\sigma>0.$ To this end we need Lemma A.48 of \cite{ColdingMiniUniqueness} and the estimate 
\begin{align}
\int_{\tau}^{\infty} \|\phi(\cdot,s)\|_{1}\ ds\lesssim \tau^{-\frac{3}{4}-\sigma}.\label{eq:desiredRate}
\end{align}
This will be proved in subsection \ref{subsec:converRate}.

Suppose \eqref{eq:desiredRate} holds. Denote the graph function, in the coordinate for $\tau=\infty$, by $\tilde{v}(\cdot,\tau)$. Compute directly to find that, for $|y|\leq R-3=\mathcal{O}(\sqrt{\ln\tau})$,
\begin{align}
|\tilde{v}(y,\theta,\tau)-\sqrt{2}|+ \sum_{|m|+j=1}|\nabla_y^{m}\partial_{\theta}^{j} \tilde{v}(y,\theta,\tau)|
\leq \tilde{C}_{\lambda_0, l, C_{l}} R^{13} \Big[ e^{-d_{l,4}\frac{(R-1)^2}{8}}+\tau^{-\frac{3}{4}}+\|\phi\|^{\frac{d_{l,4}}{2}}_{L^1(B_{R})}\Big] e^{\frac{1}{8}|y|^2}.\label{eq:point10}
\end{align} 
We choose a sufficiently large $l$ to make $d_{l,4}$ sufficiently close to $1$. This, together with the definition of $R$ in \eqref{eq:defRRR} and the estimates in \eqref{eq:CloseSharp} and \eqref{eq:L1normPhi}, implies that for any $\epsilon_1>0$, there exists a constant $C_{\epsilon_1}$ such that
\begin{align}
e^{-d_{l,4}\frac{(R-1)^2}{8}}+\|\phi\|^{\frac{d_{l,4}}{2}}_{L^1(B_{R})}\leq e^{-d_{l,4}\frac{(R-1)^2}{8}}+\|\phi\|^{\frac{d_{l,4}}{2}}_{L^1(B_{R_s})}\leq C_{\epsilon_1}\tau^{-\frac{3}{4}+\epsilon_1}.
\end{align}
Hence, if $\tau$ is large enough, then 
\begin{align}
\tilde{C}_{\lambda_0, l, C_{l}} R^{13}\Big[e^{-d_{l,4}\frac{(R-1)^2}{8}}+\tau^{-\frac{3}{4}}+\|\phi\|^{\frac{d_{l,4}}{2}}_{L^1(B_{R})}\Big]  \leq \tau^{-\frac{18}{25}},\label{eq:number10}
\end{align} by the obvious fact $\frac{18}{25}<\frac{3}{4}$ and that $R=\mathcal{O}(\sqrt{\ln \tau})$.

This directly implies the desired \eqref{eq:cm1}:  Since $R-3-\frac{12}{5}\sqrt{\ln \tau}\gg 1$, the estimates \eqref{eq:point10} and \eqref{eq:number10} hold when $|y|\leq \frac{12}{5}\sqrt{\ln \tau}$. Hence, the desired estimate follows,
\begin{align}
\big|\tilde{v}(y,\theta,\tau)-\sqrt{2}\big|,\ |\nabla_y \tilde{v}(y,\theta,\tau)|,\ |\partial_{\theta}\tilde{v}(y,\theta,\tau)|\leq \tau^{-\frac{18}{25}} e^{\frac{1}{8}|y|^2}.\label{eq:c18}
\end{align}

The estimates in \eqref{eq:cm2} are implied by \eqref{eq:cylinder} and the convergence rate of the coordinate systems in \eqref{eq:desiredRate}, and the fact we only consider the region $|y|\leq R_0(\tau)$.

What is left is to prove \eqref{eq:IniWeighted}. Since 
$\tau^{-\frac{18}{25}} e^{\frac{1}{8}|y|^2}\leq \tau^{-\frac{1}{50}}$ when $|y|\leq R_1(\tau),$ \eqref{eq:c18} implies the desired estimates for $|\tilde{v}-\sqrt{2}|$ and $\partial_{\theta}^{l}\nabla_{y}^{k}\tilde{v}$ with $|k|+l=1$ in \eqref{eq:IniWeighted}.
When $|k|+l=2,3,4,$ we interpolate between the decay estimates of $\partial_{\theta}^{m}\nabla_{y}^{k}\tilde{v}$, $|k|+l=1$, and the estimates of $\partial_{\theta}^{m}\nabla_{y}^{k}\tilde{v}$, $|k|+l=5$, in \eqref{eq:cm2}.
\subsection{Proof of \eqref{eq:desiredRate}}\label{subsec:converRate}
It was proved in \cite{ColdingMiniUniqueness} that $\int_{0}^{\infty} \|\phi(\cdot,s)\|_{1}\ ds<\infty$. Here we need a decay rate for $\int_{\tau}^{\infty} \|\phi(\cdot,s)\|_{1}\ ds$, and we obtain this by slightly refining their arguments.

To prove \eqref{eq:desiredRate}, we apply Schwartz's inequality and use that $F(\Sigma_{\tau})>0$ is decreasing to find, as (6.23) in \cite{ColdingMiniUniqueness},
\begin{align}
\int_{\tau}^{\infty} \|\phi(\cdot,s)\|_{1}\ ds\leq \sqrt{F(\Sigma_{\tau})}\sum_{j=0}^{\infty} \Big[ F(\Sigma_{\tau+j})-F(\Sigma_{\tau+j+1}) \Big]^{\frac{1}{2}}.\label{eq:step101}
\end{align} Apply H\"older's inequality to find, for any $p\in (1,2]$,
\begin{align}\label{eq:sumrule}
\Big[\sum_{j=0}^{\infty} \big[ F(\Sigma_{\tau+j})-F(\Sigma_{\tau+j+1}) \big]^{\frac{1}{2}}\Big]^2 \leq \Big[\sum_{j=0}^{\infty} \Big(F(\Sigma_{\tau+j})-F(\Sigma_{\tau+j+1})\Big)(j+1)^{p}\Big]\sum_{l=0}^{\infty} (l+1)^{-p}.
\end{align} 
The second factor is bounded since $p>1$, and for the first one we have
\begin{align*}
\begin{split}
\sum_{j=0}^{\infty} \Big(F(\Sigma_{\tau+j})-F(\Sigma_{\tau+j+1})\Big)(j+1)^{p}=&F(\Sigma_{\tau})-F(\mathcal{C})\\
&+\sum_{j=0}^{\infty}[(j+2)^{p}-(j+1)^{p}][F(\Sigma_{\tau+j+1})-F(\mathcal{C})].
\end{split}
\end{align*}
This, \eqref{eq:ODE}, and the estimate, for some $C_{p}>0,$ $$(j+2)^{p}-(j+1)^{p}\leq C_{p}j^{p-1}\leq C_{p}(j+\tau)^{p-1}$$ 
imply, for some $C_{p,\epsilon_0}>0$
\begin{align}
\sum_{j=0}^{\infty}[(j+2)^{p}-(j+1)^{p}][F(\Sigma_{\tau+j+1})-F(\mathcal{C})]\leq C_{p,\epsilon_0} \tau^{-3+p+\epsilon_0}.
\end{align}
Returning to \eqref{eq:sumrule}, we find, for some $\tilde{C}_{p,\epsilon_0}>0,$
\begin{align}
\sum_{j=0}^{\infty} \big[ F(\Sigma_{\tau+j})-F(\Sigma_{\tau+j+1}) \big]^{\frac{1}{2}} \leq \tilde{C}_{p,\epsilon_0} \tau^{-\frac{3-p-\epsilon_0}{2}}.
\end{align}

Apply this to \eqref{eq:step101} and choose a $p$ sufficiently close to $1$ to obtain the desired
\eqref{eq:desiredRate}, recall that $F(\Sigma_{\tau})>0$ is decreasing in $\tau$ and is uniformly bounded.

\def\cprime{$'$}

\end{document}